\documentclass[11pt]{article}
\usepackage{arxiv}

\usepackage{amsthm,amssymb}
\usepackage{mathtools,thmtools}
\usepackage{bm,esint}
\usepackage{color}
\usepackage{float,graphicx,subcaption}
\usepackage{cancel}
\usepackage[shortlabels]{enumitem}
\usepackage{mathrsfs}  
\usepackage{bbm}
\usepackage{euscript}
\usepackage{hyperref}
\usepackage{cleveref}
\usepackage{upgreek}

\usepackage[color=blue!20!white]{todonotes}

\newcommand{\mfK}{\mathsf{K}}

\usepackage{color}
\definecolor{darkred}{rgb}{.6,0,0}
\definecolor{darkblue}{rgb}{0,0,.7}
\definecolor{darkgreen}{rgb}{0,.7,0}
\definecolor{darkbrown}{rgb}{0.8,0.4,0.4}

\newcommand{\ams}[1]{{\color{black}{#1}}}
\newcommand{\sam}[1]{{\color{black}{#1}}}
\newcommand{\zongyi}[1]{{\color{black}{#1}}}


\newcommand{\per}{\mathrm{per}}

%

%



%
\renewcommand{\tilde}{\widetilde}
\renewcommand{\hat}{\widehat}



\newcommand{\embeds}{{\hookrightarrow}}

\renewcommand{\bar}{\overline}


%

\newcommand{\slot}{{\,\cdot\,}}

%

\newcommand{\T}{\mathbb{T}}
\newcommand{\R}{\mathbb{R}}
\newcommand{\C}{\mathbb{C}}
\newcommand{\E}{\mathbb{E}}
\newcommand{\N}{\mathbb{N}}

\newcommand{\Lip}{\mathrm{Lip}}

%


\newcommand{\cE}{\mathcal{E}}

\newcommand{\cK}{\mathcal{K}}
\newcommand{\cL}{\mathcal{L}}
\newcommand{\cM}{\mathcal{M}}

\newcommand{\cQ}{\mathcal{Q}}
\newcommand{\cR}{\mathcal{R}}

\newcommand{\cV}{\mathcal{V}}

\newcommand{\cX}{\mathcal{X}}
\newcommand{\cY}{\mathcal{Y}}


\newcommand{\tQ}{\tilde{\cQ}}



%

%
\newcommand{\define}{\textbf}

\renewcommand{\hat}{\widehat}

\newcommand{\set}[2]{{\left\{ #1 \,\middle|\, #2 \right\}}}

%









\declaretheoremstyle[
  headfont=\normalfont\bfseries\itshape,
  numbered=yes,
  bodyfont=\normalfont,
  spaceabove=1em plus 0.75em minus 0.25em,
  spacebelow=2em,
  qed={\small$\Diamond$},
]{deflt}

\declaretheorem[style=deflt,numberwithin=section]{theorem}
\declaretheorem[style=deflt,sibling=theorem]{lemma}
\declaretheorem[style=deflt,sibling=theorem]{proposition}
\declaretheorem[style=deflt,sibling=theorem]{corollary}
\declaretheorem[style=deflt,sibling=theorem]{remark}

\declaretheorem[style=deflt,sibling=theorem]{claim}


\numberwithin{equation}{section}



\newcommand{\dc}{{d_c}}


\title{\ams{Nonlocality and Nonlinearity Implies Universality in Operator Learning}}

\author{
Samuel Lanthaler
\thanks{Address: California Institute of Technology,
1200 E. California Blvd., MC 305-16,
Pasadena, CA, 91125, 
Email: \texttt{slanth@caltech.edu}, Phone: 626-395-2983} 
\\
California Institute of Technology
\And
Zongyi Li \\
California Institute of Technology
\And
Andrew M. Stuart \\
California Institute of Technology
}

\date{\today}

\ifpdf
\makeatletter
\hypersetup{
  pdftitle={\@title},
  pdfauthor={\@author}
}
\makeatother
\fi

\begin{document}
\maketitle

\begin{abstract}
\ams{Neural operator architectures approximate operators between infinite-dimensional Banach spaces of functions. They are gaining increased attention in computational science and engineering, due to their potential both to accelerate traditional numerical methods and to enable data-driven discovery. As the field is in its infancy
basic questions about minimal requirements for universal approximation remain open. It is clear that any
general approximation of operators between spaces of functions must be both nonlocal and nonlinear. In
this paper we describe how these two attributes may be combined in a simple way to deduce universal
approximation. In so doing we unify the analysis of a wide range of neural operator architectures
and open up consideration of new ones.

A popular variant of neural operators is the Fourier neural operator (FNO). Previous analysis proving universal operator approximation theorems for FNOs resorts to use of an unbounded number of Fourier modes, relying on intuition from traditional analysis of spectral methods. The present work challenges this point of view: (i) the work reduces FNO to its core essence, resulting in a minimal architecture termed the ``averaging neural operator'' (ANO); and (ii) analysis of the ANO shows that 
even this minimal ANO architecture benefits from universal approximation. This result is obtained based on only a spatial average as its only nonlocal ingredient (corresponding to retaining only a \emph{single} Fourier mode in the special case of the FNO, taking the analysis far from that of spectral methods.)
The analysis paves the way for a more systematic exploration of nonlocality, both through the development of new operator learning architectures and the analysis of existing and new architectures. 
Numerical results are presented which give insight into complexity issues
related to the roles of channel width (embedding dimension) and number of Fourier modes.}
\end{abstract}

\section{Introduction}

\subsection{Motivation and Literature Review}
\label{ssec:mlr}

Neural networks \cite{DLbook} are receiving growing interest in computational science and engineering. In many applications of interest in the sciences, the task at hand is to approximate an underlying operator, which defines a mapping between two infinite-dimensional Banach spaces of functions. New neural network-based frameworks, termed \emph{neural operators} \cite{anandkumar_neural_2020,bhattacharya_model_2021,lu_learning_2021,kovachki2021neural}, generalize neural networks to this infinite-dimensional setting, with the aim of learning such operators from data. They have the potential to \emph{accelerate} traditional numerical methods when a mathematical description, often in the form of a partial differential equation, is known. When no model is available, this data-driven methodology has the potential to \emph{discover} the underlying input-output map.

Several neural operator architectures have been proposed in recent years.
The first neural operator architecture (i) appeared in \cite{chen_universal_1995} and was subsequently
generalized by adding depth to the underlying networks (DeepONet) \cite{lu_learning_2021};
further extensions include \cite{mionet,seidman2022nomad,lanthaler2022nonlinear,patel2022variationally}. 
DeepONets have been successfully deployed in a variety of application areas \cite{di_leoni_deeponet_2021,mao_deepmandmnet_2021,cai_deepmmnet_2021,zhang2022belnet}. Concurrently with DeepONet
a number of other approaches to operator learning have also been developed and successfully deployed including: (ii) methods which combine ideas from principal component analysis with neural networks (PCA-Net) \cite{hesthaven_nonintrusive_2018,bhattacharya_model_2021}; (iii) operator learning based on random features, which may be viewed as Monte Carlo approximation of kernel methods, as introduced in \cite{nelsen2021random}; and (iv) the wide class of neural operators introduced in \cite{anandkumar_neural_2020,kovachki2021neural}. The class
of methods (iv) defines neural operators in analogy with conventional neural networks, but appends the weight matrices in the hidden layers with additional linear integral operators acting on the input function. Special cases of this framework include graph neural operators \cite{anandkumar_neural_2020,li_multipole_2020} and the Fourier neural operator (FNO) \cite{li_fourier_2021}. The FNO in particular has received considerable interest due to its state-of-the-art performance on many tasks. 
The FNO, and extensions thereof, remains a very active research direction: see, for example, \cite{wen2022u,li2022fourier,you2022learning,pathak2022fourcastnet,li2021physics,bonev2023spherical}. We also mention the closely related convolutional neural operator of \cite{raonic2023} and a related Fourier-based approach in  \cite{patel2018nonlinear,patel2021physics}.
The reliance on a Fourier basis limits the basic form of the FNO to periodic 
geometries although, in that setting, use of the fast Fourier transform (FFT)
allows for efficient computations with total number of Fourier components limited
only by the grid resolution.
There is a set of papers which seek to address the restriction to periodic geometry by replacing the
FFT with a wavelet \cite{tripura2023} or multi-wavelet \cite{gupta2021multiwavelet} 
transform; in this context, we also mention Laplace neural operators \cite{cao2023lno}, which extends the FNO by replacing the Fourier transform/inversion step with a Laplace
transform/inversion step. Another extension of FNO, so-called Neural Operator on Riemannian Manifolds (NORM) \cite{laplaceno}, generalize
the FNO to use arbitrary orthogonal eigenfunctions of the Laplace-Beltrami operator on
any given spatial domain. Another approach to extending the scope of the FFT is to learn transformations of arbitrary domains into unit cubes, suitable for 
application of the FFT \cite{li2022fourier}.
Most closely related to the approach proposed in the present paper is the low rank neural operator \cite[Sect. 4.2]{kovachki2021neural} which, however, has a more complicated architecture than that proposed here. 

There is a growing body of approximation theory for certain classes of operators arising from
PDEs, using Galerkin and Taylor approximation in high dimensional spaces;
see \cite{cohen2015approximation,kaarnioja2023lattice} for recent overviews of this literature,
in particular in the context of the Darcy flow problem as pioneered in the paper \cite{cohen2010convergence}.
However the empirical success of neural operators on very wide classes of operator learning problems,
beyond those covered by classical approximation theoretic approaches,
suggests the value of developing theory for neural operators.
A prerequisite for the success of any neural operator architecture, and its use in a diverse range of applications, is a \emph{universal approximation} property. Universal approximation results are known for DeepONets \cite{chen_universal_1995,lanthaler_error_2021}, PCA-Net \cite{bhattacharya_model_2021}, for general neural operators \cite{kovachki2021neural} and for the (more constrained) Fourier neural operator \cite{kovachki_universal_2021}. As most operators of interest are not only nonlinear, but also nonlocal, any successful operator learning framework must share both of these properties. While the need for nonlinearity is well-understood even for ordinary neural networks, nonlocality is a requirement that is more specific to the function-space setting; it is a crucial ingredient for operator learning. 
The headline message of this paper
is that universal approximation can be obtained in general geometries, with nonlocality introduced
using only a low-rank operator of fixed finite rank, and is not restricted to periodic domains.

Fourier neural operators introduce nonlocality via the addition of a nonlocal operator in each hidden layer, which acts on the Fourier modes of the input function by matrix multiplication. Available analysis of FNOs in \cite{kovachki_universal_2021} mostly rests on an analogy between FNOs and spectral methods: higher approximation accuracy is established by retaining larger numbers of Fourier modes. This interpretation is at odds with practical experience with FNOs.
FNOs are typically implemented with a first layer which lifts  the input,
a scalar or vector-valued function, to a vector-valued function where the vector
dimension (referred to as the number of channels, also named the model width) is much higher than that of the input function itself. In some circumstances it can be more beneficial to increase the number of channels rather than to retain more Fourier modes in the architecture. A first theoretical insight into this empirical observation has recently been achieved in \cite{lanthaler2022nonlinear} from the perspective of nonlinear reconstruction, where the analysis of specific settings revealed clear benefits of increasing the number of channels, and showed that, in those specific cases, arbitrary accuracy can be achieved when retaining only a \emph{fixed} number of Fourier modes; the same paper also
contains numerical experiments which exemplify the theoretical insight. These and similar results indicate that our theoretical understanding of neural operators, which has been largely guided by experience with traditional numerical methods, is still incomplete, and suggests a need for further analysis to improve our understanding of the precise role of nonlocality in operator learning. 

These observations motivate the present work, in which we seek a deeper understanding of ``how much'' nonlocality is needed in neural operator architectures. The surprising result of the present work is that even very simple nonlocality, in the form of an integral \emph{average}, is already sufficient for universality in operator learning. This is astonishing if one contrasts the simplicity of a humble average with the often complex nonlocal PDE solution operators of interest in engineering and the sciences, which, for example, give rise to turbulence and other complex phenomena. The present work thus paves the way for a more systematic exploration of nonlocality in operator learning. 

\ams{We highlight the fact that theory quantifying the computational complexity of achieving a given error
appears very difficult for neural operators. It is currently limited to random features models \cite{lanthaler2024error}, to linear problems \cite{de2023convergence} and to specific problems such as those with holomorphic structure and deploying specific network constructions \cite{marcati2023exponential,Herrmann2024Operator}; numerical studies of the cost-accuracy trade-off beyond these special settings may be found in \cite{de2022cost}. 
Thus, although they do not quantify complexity, universal approximation theorems
have an important role to play as neural operator methodology is developed: they highlight the importance of
certain high level design choices in neural operators.
We also note that our analysis and numerics provide an improved understanding of the trade-offs between
increasing the channel width versus increasing the number of Fourier modes, in the FNO architecture. Indeed our numerical experiments point to problem-dependent subtleties in optimizing this trade-off.}

In the next subsection, we \sam{first provide an overview of the general structure of neural operators in a unified framework, which encompasses recently proposed architectures \cite{li2022fourier,bonev2023spherical}. We then proceed to boil this general structure down to its bare essence,} the averaging neural operator (ANO). \sam{The ANO builds on only two minimal ingredients, nonlinearity by composition with shallow neural networks, and nonlocality via a spatial average.} We state, and sketch the
proof of, two universal approximation theorems for the ANO; proof details are given in the Appendix \ref{app:pf-univ}.
The ANO is a subclass of numerous neural operators, including many of those listed
in the preceding literature review; we thus deduce, in section \ref{sec:connection},
many universal approximation theorems from properties of the ANO. We follow this, in section \ref{sec:N}, with numerical
experiments illustrating some of the implications of our work. Section \ref{sec:D}
contains concluding discussions. 

\subsection{Neural Operator} 
\label{ssec:NNO}

\sam{
We now provide an overview of neural operator architectures which are of particular relevance to this work. To differentiate this generic class from specific instances, such as those further discussed in Section \ref{sec:connection}, we will refer to this general class as \emph{nonlocal neural operators} (NNO).
}
Let $\Omega$ denote a bounded domain in $\R^d$ (or potentially a manifold) and let $\cX(\Omega; \R^o), \cY(\Omega; \R^o)$ and $\cV(\Omega; \R^o)$ denote Banach spaces of $\R^o-$valued functions over
$\Omega.$
The NNO is defined as a mapping $\Psi: \cX(\Omega; \R^k) \to \cY(\Omega; \R^{k'})$ which can be written as a composition of the form $\Psi = \cQ \circ \cL_L \circ \dots \circ \cL_1 \circ \cR$, consisting of a lifting layer $\cR$, hidden layers $\cL_\ell$, $\ell=1,\dots, L$, and a projection layer $\cQ$. Given a \define{channel dimension} $\dc$, the \define{lifting layer} $\cR$ is given by a mapping
\begin{align}
\label{eq:R}
\cR: \cX(\Omega; \R^k) \to \cV(\Omega; \R^\dc), \quad u(x) \mapsto R(u(x),x),
\end{align}
where $R: \R^k \times \Omega \to \R^\dc$ is a learnable neural network acting
between finite dimensional Euclidean spaces. 
For $\ell = 1, \dots, L$ (the number of \define{hidden layers}) 
and for $m=0,\dots, M$ (the number of \define{modes}) 
choose functions $\psi_{\ell,m}, \phi_{\ell,m}: \Omega \to \R^{d_c}$.
For $\ell = 1, \dots, L$, each \define{hidden layer} $\cL_\ell$ is of the form
\begin{align}
\label{eq:gexp}
(\cL_{\ell} v)(x) := 
\sigma\left(
W_\ell v(x) + b_\ell + \sum_{m=0}^M \langle T_{\ell,m} v, \psi_{\ell,m} \rangle_{L^2(\Omega;\R^{d_c})} \phi_{\ell,m}(x)
\right).
\end{align}
Each hidden layer defines a mapping $\cL_\ell: \cV(\Omega; \R^{d_c}) \to \cV(\Omega; \R^{d_c}).$ For $\ell = 1, \dots, L$ and for $m=0,\dots, M$ 
the matrices $W_\ell, T_{\ell,m}\in \R^{\dc\times \dc}$, 
and bias $b_{\ell}\in \R^\dc$ are learnable parameters. 
The activation function $\sigma: \R \to \R$ acts as a Nemitskii-operator, component-wise on inputs; i.e. for a vector-valued function $v(x) = (v_1(x), \dots, v_\dc(x))$, we define $\sigma(v(x)) := (\sigma(v_1(x)), \dots, \sigma(v_\dc(x)))$ for $x\in \Omega$. 
Throughout the paper the activation function $\sigma: \R \to \R$ is assumed to be smooth, $\sigma \in C^\infty(\R)$, nonpolynomial and Lipschitz continuous.
Finally, the \define{projection layer} $\cQ$ is given by a mapping,
\begin{align}
\label{eq:Q}
\cQ: \cV(\Omega; \R^\dc) \to \cY(\Omega; \R^{k'}), \quad v(x) \mapsto Q(v(x),x),
\end{align}
where $Q: \R^{\dc}\times \Omega \to \R^{k'}$ is
also a learnable neural network acting
between finite dimensional Euclidean spaces. \ams{Note that the form of the lifting and projection
layers allows for \emph{positional encoding.}}

\sam{
The general framework summarized above reduces to the FNO in a periodic geometry and if the expansion functions are chosen as Fourier basis functions, indexed by $m$ and independent of $\ell$. It also relates to other architectures, such as wavelet operators \cite{gupta2021multiwavelet,tripura2023}, or the Laplace eigenbasis neural operators of \cite{laplaceno}. The central question to be addressed in the following is this: ``which minimal assumptions have to be imposed on the expansion functions $\psi_{\ell,m}$ and $\phi_{\ell,m}$, to ensure universal approximation of the resulting architecture?''
}

\section{Averaging Suffices for Universal Approximation}
\label{sec:A}

In this section we prove that the use of a simple average provides 
enough nonlocality, within the broader NNO structure, leading to the two
universal approximation Theorems \ref{thm:universal10}
and \ref{thm:universal20} stated below. \sam{The architecture used to prove
these theorems is a subclass of many concrete instantiations of the general NNO architecture,
and hence implies universality approximation results for NNO as corollaries (cf. Section \ref{sec:connection}, below).}
When the domain is periodic,
the resulting averaging neural operator is also a special case of the FNO, when only the zeroth Fourier mode is retained. Thus, our universality result 
also implies the universality of FNOs, even when only computing  with a single,
constant, Fourier mode. Thus, the theoretical results in this section
provide new insight into the empirical observation, for example in the experiments in \cite{lanthaler2022nonlinear}, that increasing the number of channels is often more beneficial than increasing the number of Fourier modes in the practical training of FNOs. In subsection \ref{ssec:nonlocal} we discuss the nonlinearity of
neural networks between finite dimensional Euclidean spaces and emphasize
the role of nonlocality in operator learning. Subsection \ref{ssec:subclass}
defines the subclass of NNOs used for our analysis of universal approximation,
the topic of subsection \ref{ssec:UA}. Intuition for the
proof is given in subsection \ref{ssec:intuition}, and a proof sketch is provided in subsection
\ref{ssec:sketch}, with details left for the appendix.

\subsection{Nonlinearity and Nonlocality}
\label{ssec:nonlocal}

To achieve a universal approximation property, a neural operator architecture must necessarily define a nonlinear operator. 
In the context of ordinary neural networks acting as approximators of functions
between finite dimensional Euclidean spaces, it is well-known that (nonpolynomial) nonlinearity essentially also represents a \emph{sufficient} condition for their universal approximation property \cite{pinkus1999approximation,BAR1,HOR1,Cy1}. This should be contrasted with neural operators, where nonlinearity alone is not sufficient to ensure universality; to illustrate this latter fact, we note that even a single layer of an ordinary neural network actually gives rise to a nonlinear operator, which maps an input function to an output function by composition, 
\begin{align}
\label{eq:nonlin}
u(x) \mapsto \sigma( W u(x) + b).
\end{align} 
Here, $W$ and $b$ represent the weights and biases, respectively, and $\sigma$ denotes the activation function.
Despite being highly nonlinear, operators of such a compositional form cannot be universal. The main reason for this is that any such $\Psi$ is \emph{local}, in the sense that the value $\Psi(u)(x)$ of the output function at a given evaluation point $x$ depends only on the value of the input function $u(x)$ at that same evaluation point. As a consequence, such mappings are not able to approximate even simple operators $\Psi^\dagger$ with a \emph{nonlocal} dependence on the input, such as the shift operator $\Psi^\dagger(u)(x) := u(x+h)$ for fixed $h\ne 0$.

The arguably simplest example of a nonlocal operator with dependence on \emph{all} point values $u(x)$, $x\in \Omega$, is given by averaging the input function $u$ over its domain $\Omega$:
\begin{align}
\label{eq:avg}
u(x) \mapsto \fint_{\Omega} u(y) \, dy:= \frac{1}{|\Omega|} \int  u(y) \, dy.
\end{align}
Clearly, such averaging represents only a very special case of a nonlocal operator. In general, nonlocal operators can have a much more complicated dependence on the function values $\set{u(x)}{x\in \Omega}$ over the  whole domain, as well as the mutual correlations of these values.

\subsection{Averaging Neural Operator: a Special Subclass of the NNO}
\label{ssec:subclass}
In this section, we define a special subclass of the NNO, which combines nonlinearity by composition \eqref{eq:nonlin} with nonlocality by averaging \eqref{eq:avg}.
Recall that we have restricted this paper to consideration of only smooth, nonpolynomial
and Lipschitz continuous activation functions $\sigma$. 
Generalization of our results to other activation functions is possible, but would require additional technical assumptions in our main results.
We now define the subclass of NNOs found by simplifying to a special subclass of
hidden layers of the form,
\begin{align}
\label{eq:hidden}
\cL: \cV(\Omega; \R^\dc) \to \cV(\Omega; \R^\dc), \quad \cL(v)(x) := \sigma\left(W v(x) + b + \fint_{\Omega} v(y) \, dy\right).
\end{align}
\sam{
More specifically, we will focus on the case of \emph{a single hidden layer}. This results in the following \emph{averaging neural operator} (ANO) architecture:
\[
\Psi: \cX(\Omega; \R^o) \to \cY(\Omega;\R^o), \quad \Psi(u) = \cQ \circ \cL \circ \cR(u),
\]
where the lifting and projection mappings, $\cR$ and $\cQ$, are given by \eqref{eq:R} and \eqref{eq:Q}, respectively, with $R$ and $Q$ single-hidden layer neural networks of width $\dc$, and where $\cL$ is of the form \eqref{eq:hidden}.

We note that this ANO is a specific subclass of the NNO. Due to its minimal structure, the ANO depends on only one \emph{hyperparameter}; the lifting dimension $\dc$. The \emph{tunable parameters} of ANO are represented by the weight matrix $W \in \R^{\dc\times \dc}$ and bias $b \in \R^{\dc}$ in the hidden layer $\cL$, and the internal weights and biases of the ordinary neural networks $R$ and $Q$, which define the lifting and projection layers, respectively.
}

\subsection{Universal Approximation}
\label{ssec:UA}

Despite the apparent simplicity of averaging as the only nonlocal ingredient in the definition of the averaging neural operator, the following theorem shows that this architecture is universal:

\begin{theorem}
\label{thm:universal10}
Let $\Omega \subset \R^d$ be a bounded domain with Lipschitz boundary. For given integers $s,s'\ge 0$, let $\Psi^\dagger: C^{s}(\bar{\Omega}; \R^k) \to C^{s'}(\bar{\Omega}; \R^{k'})$ be a continuous operator, and fix a compact set $\mfK\subset C^s(\bar{\Omega};\R^k)$. Then for any $\epsilon > 0$, there exists an averaging neural operator $\Psi: \mfK \subset C^s(\bar{\Omega}; \R^k) \to C^{s'}(\bar{\Omega}; \R^{k'})$ such that 
\[
\sup_{u\in \mfK} \Vert \Psi^\dagger(u) - \Psi(u) \Vert_{C^{s'}} \le \epsilon.
\]
\end{theorem}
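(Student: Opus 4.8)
The plan is to reduce the operator-approximation problem to a finite-dimensional one and then invoke classical universal approximation for ordinary neural networks. First I would exploit compactness of $\mfK$ together with continuity of $\Psi^\dagger$: since $\mfK$ is compact in $C^s(\bar\Omega;\R^k)$ and $\Psi^\dagger$ is continuous, the image $\Psi^\dagger(\mfK)$ is compact in $C^{s'}(\bar\Omega;\R^{k'})$, and $\Psi^\dagger$ is uniformly continuous on $\mfK$. The key structural idea is that a single averaging layer $\cL$ of the form \eqref{eq:hidden}, when preceded by a rich lifting map $\cR$, can encode a finite number of \emph{linear functionals} of the input: by choosing the lifting network $R(u(x),x)$ to output, in some of its $\dc$ channels, functions of the form $\theta_j(x)\, g_j(u(x))$, the spatial average $\fint_\Omega v(y)\,dy$ produces the numbers $\fint_\Omega \theta_j(y) g_j(u(y))\,dy$, i.e. a vector of ``generalized moments'' of $u$. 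With enough channels one obtains an approximate finite-dimensional \emph{encoding} $E:\mfK \to \R^N$, $u \mapsto (\ip{u}{\mu_1},\dots,\ip{u}{\mu_N})$, that is injective enough to separate points of $\mfK$ up to the resolution $\epsilon$ — concretely, one picks the $\mu_i$ so that the induced pseudmetric on $\mfK$ is fine enough that $\Psi^\dagger$ factors, up to $\epsilon/3$, through $E$.

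Next I would handle the \emph{decoding} side. The output $\Psi^\dagger(u)$ lives in $C^{s'}(\bar\Omega;\R^{k'})$; using compactness of $\Psi^\dagger(\mfK)$ one approximates every element of this set, uniformly to accuracy $\epsilon/3$ in $C^{s'}$, by a finite linear combination $\sum_{i=1}^{N'} c_i(u)\,\beta_i(x)$ of fixed smooth basis functions $\beta_i \in C^\infty(\bar\Omega;\R^{k'})$ (for instance via a partition-of-unity / mollification argument, or a finite-element-type basis, arranged so that $C^{s'}$-approximation holds). The coefficient map $u \mapsto (c_1(u),\dots,c_{N'}(u))$ is a continuous function on $\mfK$, hence — after composing with the encoding — factors approximately through a continuous function $F:\R^N \to \R^{N'}$ on a compact subset of $\R^N$. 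Now I would realize the pipeline with the ANO: the lifting layer computes the pre-average functions in its channels; the hidden layer $\cL$ applies $\sigma$ after forming $Wv(x)+b+\fint v$, where $W$ is chosen to (approximately, using nonpolynomiality of $\sigma$ to linearize locally, or to pass the averaged quantities through unchanged) route the $N$ averaged numbers into dedicated channels while also retaining the positional variable $x$ via the positional-encoding freedom in $\cR$; finally the projection network $Q(v(x),x)$ is a single-hidden-layer net that must approximate $(z,x) \mapsto \sum_i F_i(z)\,\beta_i(x)$ to accuracy $\epsilon/3$ in the $C^{s'}$-in-$x$, uniform-in-$z$ sense. Standard neural-network universal approximation in $C^{s'}$ norm (derivatives up to order $s'$, on a compact set) delivers this, since $\sigma$ is smooth and nonpolynomial; the same classical result is used for $R$.

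The main obstacle I anticipate is bookkeeping the interaction between the \emph{single} hidden layer and the need to both (a) form the nonlinear pre-average integrands $\theta_j(x)g_j(u(x))$ and (b) then feed the resulting constants, together with $x$, into the projection network — all while $\cL$ applies its own nonlinearity $\sigma$ to everything. The clean way around this is to let $\cR$ do the real nonlinear work (it is a genuine neural network applied pointwise, so it can produce $\theta_j(x)g_j(u(x))$ directly), choose $W$ and $b$ in $\cL$ so that $\sigma(Wv(x)+b+\fint v)$ reproduces — on the relevant bounded range, via a local linear expansion of $\sigma$ which is legitimate because $\sigma$ is smooth — an affine function of $(v(x), \fint v, x)$ carrying forward both the pointwise data and the $N$ averages, and then let $Q$ absorb the remaining approximation. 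A secondary subtlety is that $\fint_\Omega v(y)\,dy$ is literally the zeroth Fourier mode (constant mode), so one must double-check that the $x$-dependence needed for the output is supplied entirely through the positional encoding in $\cR$ and propagated, not through the average; this is where I would be most careful, and I would structure the appendix proof so that the encoding dimension $N$, the number of output basis functions $N'$, and hence the channel width $\dc$, are all chosen at the end as functions of $\epsilon$, $\mfK$, and $\Psi^\dagger$.
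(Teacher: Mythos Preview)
Your proposal is correct and follows essentially the same encoder--decoder strategy as the paper: compactness of $\Psi^\dagger(\mfK)$ yields a finite basis expansion on the output side, compactness of $\mfK$ yields a finite collection of averaged linear functionals on the input side, and classical $C^{s'}$ neural-network universality glues the two together through $Q$.

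The one place where you make your life harder than the paper does is the ``main obstacle'' you flag. The paper simply sets $W=0$ and $b=0$ in the hidden layer, so that $\cL\circ\cR(u) = \sigma\bigl(\fint_\Omega R(u(y),y)\,dy\bigr)$ is a \emph{constant} vector; the $\sigma$ applied here is then absorbed as the first activation of the ordinary neural network that approximates the coefficient map $c\mapsto F(c)$ (write that network as $\tilde\beta_1\circ\sigma(\tilde A_1 c+\tilde b_1)$ and arrange $R$ so that $\fint R(u(y),y)\,dy \approx \tilde A_1 c(u)+\tilde b_1$). There is no need to linearize $\sigma$ or to route pointwise data $v(x)$ or positional information through $\cL$: the projection layer $Q(v,x)$ already carries explicit $x$-dependence, so all spatial structure on the output side is reintroduced there. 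With this observation your anticipated bookkeeping obstacle disappears entirely.
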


The previous result is formulated for operators on spaces of continuously differentiable functions. However, it is possible to obtain similar results in many other natural settings. To illustrate the generality of the underlying ideas, we provide a corresponding result in the scale of Sobolev spaces $W^{s,p}$:
\begin{theorem}
\label{thm:universal20}
Let $\Omega \subset \R^d$ be a bounded domain with Lipschitz boundary. 
For given integers $s,s'\ge 0$, and reals $p,p'\in [1,\infty)$, let $\Psi^\dagger: W^{s,p}(\Omega; \R^k) \to W^{s',p'}(\Omega; \R^{k'})$ be a continuous operator. Fix a compact set $\mfK\subset W^{s,p}(\Omega;\R^k)$ of bounded functions, $\sup_{u\in K} \Vert u \Vert_{L^\infty} <\infty$. Then for any $\epsilon > 0$, there exists an averaging neural operator $\Psi: W^{s,p}(\Omega; \R^k) \to W^{s',p'}(\Omega; \R^{k'})$ such that 
\[
\sup_{u\in K} \Vert \Psi^\dagger(u) - \Psi(u) \Vert_{W^{s',p'}} \le \epsilon.
\]
\end{theorem}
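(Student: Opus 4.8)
The plan is to prove Theorem~\ref{thm:universal20} by the same encoder--processor--decoder strategy that underlies Theorem~\ref{thm:universal10}, adapted to the Sobolev scale; the genuinely new technical points are the role of the $L^\infty$-bound on $\mfK$ and the replacement of $C^{s'}$-facts by density of smooth functions in $W^{s',p'}$. First I would replace $\Psi^\dagger$ by a finite-dimensional surrogate. Fix finitely many continuous functions $\phi_1,\dots,\phi_J$ on $\bar\Omega$ and let $\cE_0:W^{s,p}(\Omega;\R^k)\to\R^{N}$, $N=kJ$, collect the numbers $\int_\Omega u_a(y)\phi_j(y)\,dy$. The crucial point is that the $\phi_j$ can be chosen so that $\cE_0$ is injective on $\mfK$ with a uniform modulus for its inverse: the locally convex topology on $W^{s,p}$ generated by all functionals $u\mapsto\int u\phi$, $\phi\in C(\bar\Omega)$, is Hausdorff and coarser than the norm topology, hence it coincides with the norm topology on the compact set $\mfK-\mfK$; so for any $\delta>0$ finitely many such functionals already force $\Vert u-u'\Vert_{W^{s,p}}<\delta$ whenever $u,u'\in\mfK$ and $\cE_0(u),\cE_0(u')$ are sufficiently close. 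Then $\cE_0|_\mfK$ is a homeomorphism onto the compact set $\cE_0(\mfK)\subset\R^N$; extending $\Psi^\dagger\circ(\cE_0|_\mfK)^{-1}$ continuously to $\R^N$ (Tietze/Dugundji), using that its image is compact and that $C^\infty(\bar\Omega;\R^{k'})$ is dense in $W^{s',p'}(\Omega;\R^{k'})$ (here $p'<\infty$), and projecting onto a finite-dimensional subspace through a continuous partition of unity, I obtain $m\in\N$, smooth functions $g_1,\dots,g_m$ and continuous $\varphi_1,\dots,\varphi_m:\R^N\to\R$ with $\sup_{u\in\mfK}\Vert\Psi^\dagger(u)-\sum_{i=1}^m\varphi_i(\cE_0(u))g_i\Vert_{W^{s',p'}}\le\epsilon/4$.

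Next I would realize this surrogate by an ANO, taking the hidden-layer matrix $W=0$ so that $\Psi(u)(x)=Q\big(\sigma(b+\bar v(u)),x\big)$ with $\bar v(u):=\fint_\Omega R(u(y),y)\,dy\in\R^{\dc}$. Because $\sup_{u\in\mfK}\Vert u\Vert_{L^\infty}=:M<\infty$ by hypothesis---this is exactly where that hypothesis is used---the set $[-M-1,M+1]^k\times\bar\Omega$ is compact, and I would pick the lifting network $R$ to approximate on it, in the uniform norm, the map $(\xi,x)\mapsto\big(|\Omega|\,\phi_j(x)\,\xi_{a}\big)_{a,j}$ padded by zeros; classical single-hidden-layer approximation of continuous maps (e.g.~\cite{pinkus1999approximation}) does this with width at most $\dc$ for $\dc$ large, which forces $\bar v(u)=(\cE_0(u),0,\dots,0)+O(\eta)$ uniformly on $\mfK$, $\eta$ as small as desired; the Lipschitz bound $\Vert\bar v(u)-\bar v(u')\Vert\le C\Vert u-u'\Vert_{L^1}$ also shows $u\mapsto\bar v(u)$ is continuous (and that $\Psi$ is defined on all of $W^{s,p}$). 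Rescaling the $\phi_j$ so that $b+\bar v(u)$ lies near a point $t_0$ with $\sigma'(t_0)\ne0$ (such $t_0$ exists since $\sigma$ is smooth and nonpolynomial) and taking $b=(t_0,\dots,t_0)$, the map $\bar v\mapsto\sigma(b+\bar v)$ is bi-Lipschitz on the relevant coordinates, so $z(u):=\sigma(b+\bar v(u))$, ranging over a compact $\cZ\subset\R^{\dc}$, determines $\cE_0(u)$ up to $O(\eta)$ through a continuous map $\beta$. Setting $\tilde H(z,x):=\sum_i\varphi_i(\beta(z))g_i(x)$, mollifying in $z$ to get a jointly $C^\infty$ function $\tilde H_\delta$ with $\sup_{z\in\cZ}\Vert\tilde H_\delta(z,\cdot)-\tilde H(z,\cdot)\Vert_{W^{s',p'}}$ small (the $x$-profiles are finite combinations of the smooth $g_i$), and approximating $\tilde H_\delta$ by a single-hidden-layer network $Q$ in $C^{s'}(\cZ'\times\bar\Omega)$ (the $C^{s'}$ version of \cite{pinkus1999approximation}, valid since $\sigma$ is smooth)---which, because $\Omega$ is bounded so $C^{s'}(\bar\Omega)\hookrightarrow W^{s',p'}(\Omega)$, controls the $W^{s',p'}$-norm of its $x$-profile uniformly in $z$---I take $\dc$ large enough for both approximation tasks and sum the four $O(\epsilon)$ contributions to conclude $\sup_{u\in\mfK}\Vert\Psi^\dagger(u)-\Psi(u)\Vert_{W^{s',p'}}\le\epsilon$.

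The step I expect to be the main obstacle is the reduction in the first paragraph, specifically the assertion that suitable \emph{averaging} functionals $\cE_0$ recover $u$ faithfully enough to pin down $\Psi^\dagger(u)$: this is the only place at which nonlocality is genuinely exploited, and it hinges on the compactness of $\mfK$ through the coincidence of the weak and norm topologies on $\mfK-\mfK$. The remainder is a careful but routine assembly---packing the $N$ scalar averages $\int u_a\phi_j$ into the single $\R^{\dc}$-valued average carried by the hidden layer, and invoking finite-width neural network approximation (in the uniform norm for the lifting, in $C^{s'}$ for the projection). Relative to Theorem~\ref{thm:universal10}, the only genuinely new ingredients are the explicit $L^\infty$-boundedness of $\mfK$, needed so that the lifting map is approximated on a compact set of inputs, and the substitution of the $C^{s'}$ density/embedding facts by density of $C^\infty$ in $W^{s',p'}$ together with $C^{s'}(\bar\Omega)\hookrightarrow W^{s',p'}(\Omega)$ on bounded domains.
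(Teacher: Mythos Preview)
Your overall encoder--decoder strategy is sound and close in spirit to the paper's, but there is a genuine gap in the first reduction step. The claim that finitely many test functions $\phi_1,\dots,\phi_J$ can be chosen so that $\cE_0|_\mfK$ is \emph{injective} (and hence a homeomorphism onto its image) is false in general. Your topological argument is correct as far as it goes---the weak and norm topologies do coincide on the compact set $\mfK-\mfK$---but this only yields, for each fixed $\delta>0$, a $\delta$-dependent finite family forcing $\Vert u-u'\Vert_{W^{s,p}}<\delta$; it does not produce a single finite family giving injectivity. Concretely, take $\mfK=\{\sum_n c_n e_n/n^2:|c_n|\le 1\}\subset L^2(0,1)$ with $\{e_n\}$ an orthonormal trigonometric basis (this $\mfK$ is compact and uniformly $L^\infty$-bounded). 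Any linear $\cE_0:L^2\to\R^N$ has $\ker\cE_0$ of codimension at most $N$, so $\ker\cE_0\cap\mathrm{span}\{e_1,\dots,e_{N+1}\}\ne\{0\}$; rescaling any nonzero element of this intersection lands in $\mfK\setminus\{0\}$, so $\cE_0$ is never injective on $\mfK$. Consequently $\Psi^\dagger\circ(\cE_0|_\mfK)^{-1}$ is not defined and the Tietze/Dugundji step cannot be applied as written.

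The repair is easy and the rest of your argument survives intact: drop injectivity and use only what you actually proved, namely that for $\delta$ chosen via the uniform continuity of $\Psi^\dagger$ on $\mfK$, the $\cE_0$-fibers in $\mfK$ have $W^{s,p}$-diameter $<\delta$. Then build $G:\cE_0(\mfK)\to W^{s',p'}$ directly by a partition of unity on the compact set $\cE_0(\mfK)\subset\R^N$---cover by finitely many balls of radius comparable to your $\eta$, pick representatives $u_i\in\mfK$, and set $G(c)=\sum_i\rho_i(c)\Psi^\dagger(u_i)$; this $G$ is continuous, finite-rank, and satisfies $\sup_{u\in\mfK}\Vert\Psi^\dagger(u)-G(\cE_0(u))\Vert_{W^{s',p'}}\le\epsilon/4$. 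For comparison, the paper avoids this issue altogether: its reduction (Proposition~\ref{prop:simpler2}) writes $\Psi^\dagger(u)\approx\sum_j\alpha_j(u)\eta_j$ with the $\eta_j$ coming from a Fourier expansion on the \emph{output} side and the $\alpha_j:L^1\to\R$ general continuous functionals (built via mollification composed with $\Psi^\dagger$), never claiming any finite encoding is injective; the passage to finitely many integrals happens only later, inside Lemma~\ref{lem:alpha-approx}, via $L^2$-projection of the mollified input onto a basis rather than by inverting an encoding.
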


\sam{
A straight-forward consequence of the above results is that any more general NNO architecture, with hidden layers of the form \eqref{eq:gexp}, is universal as long as an average can be represented; under this condition, the resulting NNO reduces to ANO with a specific setting of the tunable weights (by setting certain parameters to zero). This is ensured, provided that there exists an index $m$ such that $\psi_{\ell,m}, \phi_{\ell,m} \equiv \text{const.}$ are constant functions, and satisfied for most architectures that are successfully employed in practice (cf. Section \ref{sec:connection} for several examples). 

\begin{remark}
Theorems \ref{thm:universal10} and \ref{thm:universal20} show that the ANO architecture is universal in approximating a large class of operators, uniformly over a compact set of input functions $K$. In practice, neural operators are often trained by minimizing an empirical loss,
\[
\cL(\Psi) = 
\frac1N \sum_{n=1}^N \Vert \Psi^\dagger(u_n) - \Psi(u_n)\Vert^2_{W^{s',p'}(\Omega)}, 
\quad u_1,\dots, u_N \sim \mu,
\]
where the data are iid random samples from an underlying input probability measure $\mu$. A popular prototypical choice is sampling input functions from a Gaussian random field. In this case, the set of input functions is no longer bounded, and hence not compact. As previously observed in \cite[Remark 17 and Thm. 18]{kovachki_universal_2021}, we note that it is possible to combine universality results with respect to the uniform norm over a compact set $K$, with a technical cut-off argument, to derive corresponding results in expectation; i.e. for given $\epsilon>0$, it is possible to show the existence of $\Psi$, such that 
\[
\E_{u\sim \mu}\left[ \Vert \Psi(u) - \Psi^\dagger(u) \Vert^2_{W^{s',p'}}\right]
< \epsilon,
\]
including the case where $\mu$ does not have compact support.
\end{remark}

}

\subsection{Intuition}
\label{ssec:intuition}

\sam{
Before providing a sketch of the main technical elements that go into the proof of Theorems \ref{thm:universal10} and \ref{thm:universal20}, in the next subsection \ref{ssec:sketch}, we would like to provide some further remarks on the general intuition, as well as possible variants and extensions of these theorems.

\paragraph{Encoder-Decoder Structure.}
The main theoretical insight of this work, brought out in the ANO architecture, is the identification of a hidden encoder-decoder structure which is inherent in NNOs. This structure becomes particularly apparent when setting the matrix $W$ and bias $b$ in the hidden layer \eqref{eq:hidden} zero, in which case we note that the mapping $\cL\circ \cR: \cX \to \R^\dc$, $u \mapsto \cL\circ \cR(u)$ can be thought of as a nonlinear encoding of the input function by a (constant) vector $v\in \R^\dc$, while the mapping $\cQ: \R^\dc \to \cY$, $v\mapsto Q(v,\slot)$ is a nonlinear decoding of the corresponding output function $\Psi^\dagger(u)$. Indeed, with this choice of the hidden layer, the composition $\cL\circ \cR$ has constant output,
\[
v := \cL \circ \cR(u) = \sigma\left( \fint_{\Omega} R(u(x),x) \, dx \right) \in \R^{d_c},
\]
thus serving as an encoder, $\cL \circ \cR: \cX(\Omega;\R^o) \to \R^{d_c}$. The encoded input $v$ is in turn decoded by the neural network $\cQ: v \mapsto Q(v,\slot)$ to produce the output function $\Psi(u)(x) = Q(v,x)$. 

This encoder/decoder perspective provides further insight as to why the universal approximation property can hold even when relying on simple averaging as the only nonlocal ingredient; \ams{the main difficulty in proving such
a result lies in showing that composition of the input function with a non-linear transformation, and subsequent averaging, can encode the relevant features about the input.}

\paragraph{The Role of Positional Encodings.}

For the encoder-decoder point of view presented above, the explicit $x$-dependence (``positional encodings'') of the lifting and the projection layers appears crucial at first sight, and we would like to further clarify the precise sense in which this is so. Specifically, we would like to address the fact that in practical implementations, of e.g. FNO, it is standard to add such positional encoding to the inputs, while it is not common to add such features to the output layer. 

First, we point out that some form of positional encoding is necessary even from the point of view of universality. E.g. in the case of the FNO (on a periodic domain), positional encoding represents the only mechanism by which the architecture can break translation equivariance, i.e. ensure that the FNO $\Psi$ does not commute with the shift operator, $\tau_h \circ \Psi = \Psi \circ \tau_h$, where $\tau_h(u) = u(\slot +h)$. In many cases, such as for PDEs with non-constant coefficients, the operator of interest is not translation equivariant, making this mechanism for breaking translation equivariance crucial. In previous theoretical work on universality for FNO \cite{kovachki_universal_2021}, translation equivariance was broken by assuming an architecture with $x$-dependent \emph{bias functions}, $b=b(x)$. However, in practical implementations, it is more common to add a positional encoding explicitly in the input layer $\cR: u \mapsto R(u(x),x)$, consistent with the formulation in the present work.

Second, we note that the explicit $x$-dependence in the output layer is not necessary, provided that pointwise matrix multiplication is included in the hidden layers: 
\[
v(x) \mapsto \sigma\left( W v(x) + b  + \fint_{\Omega} v(y) \, dy\right).
\]
Indeed, these pointwise operations provide the network a mechanism (akin to skip connections) to forward positional encoding information from the input layer to the output layer. Introducing an explicit $x$-dependence directly in the output layer $\cQ$ is thus not crucial from a universality point of view. We nevertheless assume such dependence, because (i) it allows us to simplify the proofs by avoiding technical details that would be required to make the above statement precise, and (ii) the assumed explicit $x$-dependence also points to a generalization of the architecture to approximate operators on input-/output-functions with distinct domains, as will be explained below.

\paragraph{Other Extensions and Variants.} There are several possible extensions and variants of our main theorems, two of which we briefly mention below.

Firstly, our derivation actually implies that neural operators of the form \eqref{eq:avg-no} are universal in approximating continuous operators $\Psi^\dagger: \cX(\Omega;\R^k) \to \cY(\Omega';\R^{k'})$ on \emph{distinct} domains $\Omega\subset \R^d$ and $\Omega'\subset \R^{d'}$, and with potentially different dimensions $d\ne d'$. We only need to replace $Q$ in \eqref{eq:avg-no} by a neural network mapping $Q': \R^\dc \times \Omega' \to \R^{k'}$. The input and output spaces $\cX, \cY$ can be any combination of $C^s(\bar{\Omega};\R^k), W^{s,p}(\Omega;\R^k)$ and $C^{s'}(\bar{\Omega}';\R^{k'}), W^{s',p'}(\Omega';\R^{k'})$, respectively, for any integers $s,s'\ge 0$ and reals $p,p' \in [1,\infty)$. Extension to this setting specifically requires that the pointwise matrix multiplication, $v(x) \mapsto Wv(x)$, in the hidden layer is removed, by setting $W=0$. It would be interesting to investigate the practical utility of this observation in numerical experiments, in the future.

Secondly, it should be straight-forward, although technically more involved, to extend the results of this work to operators mapping between functions defined on compact manifolds $M \subset \R^d$ in place of $\Omega$; this is particularly relevant to applications of neural operators to numerical weather forecasting \cite{pathak2022fourcastnet,bonev2023spherical}.
}

\subsection{Sketch Of The Proof Of Universality}
\label{ssec:sketch}

In the present section, we provide an overview of the proof of Theorems \ref{thm:universal10} and \ref{thm:universal20}. The details are included in Appendix \ref{app:pf-univ}, see Sections \ref{app:universal1} and \ref{app:universal2}, in particular. In the following overview, let $\cX = \cX(\Omega;\R^k)$ and $\cY = \cY(\Omega;\R^{k'})$ denote  spaces of continuously differentiable functions
(respectively Sobolev spaces) 
as in the statement of Theorem \ref{thm:universal10} 
(respectively Theorem \ref{thm:universal20}).

The first step in the proof of universality is to determine a reduction to a special class of operators, explained in detail in Section \ref{app:dense}, which
have the structure of an encoder-decoder. Starting from a general continuous operator $\Psi^\dagger: \cX \to \cY$, and given a compact set $\mfK \subset \cX$, we first show that for any $\epsilon > 0$, there exist $J\in \N$, elements $\eta_1,\dots, \eta_J \in \cY$ and continuous functionals $\alpha_1,\dots, \alpha_J: \cX \to \R$, such that 
\begin{equation}
\label{eq:psisimple0}
\sup_{u\in \mfK}
\left\Vert
\Psi^\dagger(u) - \sum_{j=1}^J \alpha_j(u) \eta_j
\right\Vert_{\cY}
\le 
\epsilon.
\end{equation}
Furthermore, our construction ensures that each of these functionals $\alpha_j$ actually defines a continuous functional on the larger input function space consisting of $L^1$-functions, i.e. we have continuous mappings $\alpha_j: L^1(\Omega;\R^k) \to \R$. 
Since any input function space $\cX$ considered in Theorems \ref{thm:universal10} and \ref{thm:universal20} embeds into $L^1(\Omega;\R^k)$, 
this first step in the proof reduces the original problem 
to the approximation of a continuous operator $\tilde{\Psi}^\dagger: L^1(\Omega;\R^k) \to \cY$, of the form,
\begin{align}
\label{eq:psisimple}
\tilde{\Psi}^\dagger(u) = \sum_{j=1}^J \alpha_j(u) \eta_j,
\end{align}
over a compact subset $\mfK \subset L^1(\Omega;\R^k)$ of uniformly bounded functions, $\sup_{u\in K} \Vert u \Vert_{L^\infty}<\infty$.  The structure of $\tilde{\Psi}$ is that of an encoder-decoder, with
the functionals $\alpha_j$ providing the encoding and the functions $\eta_j$ the
decoding. We refer to Proposition \ref{prop:simpler} for further details in the setting of continuously differentiable spaces, and Proposition \ref{prop:simpler2} for a corresponding statement in the scale of Sobolev spaces.

The second step in the proof, described in detail in Section \ref{app:alpha-approx}, concerns approximation of the continuous nonlinear functionals $\alpha_j: L^1(\Omega;\R^k) \to \R$, the encoders. Specifically, we show that they can be approximated to any desired accuracy $\epsilon$ by an averaging neural operator $\tilde{\alpha}: L^1(\Omega;\R^k) \to \R^J$ (with constant output functions), and with components $\tilde\alpha(u) = (\tilde\alpha_1(u),\dots, \tilde\alpha_J(u))$. To
be concrete,
\[
\sup_{u\in \mfK} \left|\alpha_j(u) - \tilde\alpha_j(u) \right| \le \epsilon,
\quad \forall \, j=1,\dots, J.
\]
This result is contained in Lemma \ref{lem:alpha-approx} of Section \ref{app:alpha-approx}, which constitutes the main ingredient in the proof of universality. 
More precisely, the constructed averaging neural operator $\tilde\alpha$ is of the form 
\[
\tilde\alpha: u \mapsto R(u(x),x) \mapsto v:= \sigma \left(\fint_{\Omega} R(u(y),y) \, dy \right) \mapsto q_1(v),
\]
where $R: \R^k \times \Omega \to \R^\dc$ and $q_1: \R^\dc \to \R^J$ are ordinary neural networks; see Remark \ref{rem:alpha-approx} after the proof of Lemma \ref{lem:alpha-approx}. This step builds on well-known approximation results for ordinary neural networks, summarized in Section \ref{app:nn-univ}, and combines them with an averaging operation.

The final third step in the proof of Theorem \ref{thm:universal10} is given in Section \ref{app:universal1} (see p. \pageref{pf:universal1}) and is built on
approximations of the functions $\eta_j$ which define the decoder. We first show that there exists another (ordinary) neural network $q_2: \R^J \times \Omega \to \R^{k'}$, such that 
$q_2(v,x) \approx \sum_{j=1}^J v_j \eta_j(x)$ for all relevant $(v,x)$,
approximating the decoding.
We then define a neural network $Q$ by the composition $Q(v,x):= q_2(q_1(v),x)$. 

Given the neural network $R$ and averaging layer $\cL(w) := \sigma\left(\fint_\Omega w(y) \, dy\right)$ from step two of the proof, and the neural network $Q$ from step three,
we show that  the averaging neural operator $\Psi: L^1(\Omega;\R^{k}) \to \cY$, given by the composition,
\begin{align}
\label{eq:avg-no}
\Psi:
u(x)
\overset{\cR}\mapsto 
R(u(x),x) 
\overset{\cL}\mapsto 
v:= \sigma \left(\fint_{\Omega} R(u(y),y) \, dy \right)
\overset{\cQ}\mapsto Q(v,x),
\end{align}
approximates the encoder-decoder \eqref{eq:psisimple}. Concretely, given 
any $\epsilon>0$, we may design the neural networks $R$ and $Q$ so that
\[
\sup_{u\in \mfK} 
\left\Vert 
\Psi(u) - \sum_{j=1}^J \alpha_j(u) \eta_j
\right\Vert_{\cY} \le \epsilon.
\]
Step one of the proof shows that an encoder-decoder
can be constructed to $\epsilon-$approximate the true map $\Psi^\dagger$, as stated in \eqref{eq:psisimple0}.
 It thus follows that averaging neural operators are universal approximators of general continuous operators $\Psi^\dagger: \cX \to \cY$.

Two important properties of $\cX$ and $\cY$ that are used in this proof are that (a) $\cX$ has a continuous embedding in $L^1(\Omega;\R^k)$, and (b) that functions $\eta_1,\dots, \eta_J\in \cY(\Omega;\R^{k'})$ can be approximated by ordinary neural networks, i.e. for any $\epsilon > 0$, there exist neural networks $\tilde{\eta}_1,\dots, \tilde{\eta}_J: \Omega \to \R^{k'}$, such that $\Vert \eta_j - \tilde{\eta}_j \Vert_{\cY} \le \epsilon$. In addition, a more technical ingredient for the reduction to an operator of the form \eqref{eq:psisimple} is the existence of a ``mollification'', defining a family of maps $\cM_\delta: L^1(\Omega;\R^k) \to \cX$, and satisfying $\Vert \cM_\delta u - u \Vert_{\cX} \to 0$ as $\delta \to 0$, for any $u\in \cX\subset L^1$. Relevant mathematical background, including a result on boundary-adapted mollification, is summarized in Appendix \ref{app:moll}. For simplicity of exposition, rather than striving for the greatest generality, we have formulated our universal approximation theorem for two concrete families of function spaces, $C^s$ and $W^{s,p}$, which satisfy the required properties and which cover most settings encountered in applications.

\begin{remark}
    We note that additional properties, such as boundary conditions, are often implicitly enforced, provided that the operator is approximated with respect to a suitable norm on the output function space. For example, if the output functions belong to the Sobolev space $W^{1,2}_0(\Omega;\R^{k'})$, respecting
    a homogeneous Dirichlet boundary condition, then approximation of the operator with respect to the $W^{1,2}(\Omega;\R^{k'})$-norm automatically ensures that the boundary conditions are at least approximately satisfied, as a consequence of the trace theorem \cite[Sect. 5.5]{evans2022partial}.
\end{remark}

In the outline above, the parameter $J$ directly relates to the required width of the network $Q$ defining the projection layer $\cQ$. Thus, $J$ can be interpreted as a measure of the complexity of the output function space. The channel dimension $\dc$ corresponds to the number of necessary ``features'', encoded via the components $\sigma \left( \fint_{\Omega} R_1(u(y),y) \, dy\right), \dots, \sigma\left(\fint_{\Omega} R_\dc(u(y),y) \, dy\right)$ of the composition $\cL \circ \cR$, needed to approximate the functionals $\alpha_j$ to a given accuracy. These nonlinear features provide an encoding of the input function, and $\dc$ is a measure of the information 
content that needs to be retained.

\section{Connection With Other Neural Operator Architectures}
\label{sec:connection}

In this section we discuss the implications of the work in the
previous section for,  and links with,
other neural operator architectures.
Several neural operator architectures can be viewed as generalizations of the ANO, 
and a specific choice of the weights in the hidden layers will reproduce the simple averaging considered in the present work. Examples include the Fourier neural operator, the wavelet neural operator and neural operator with general integral kernel.
As a consequence, the universal approximation property for the ANO immediately implies corresponding universal approximation results for these architectures. Furthermore, there are links to emerging and existing
neural operator architectures such as the Laplace neural operator \cite{laplaceno}, DeepONet \cite{lu_learning_2021} and NOMAD \cite{seidman2022nomad}.
We now detail these implications and links.

\subsection{Neural Operator With General Integral Kernel}
Recall that the hidden layers of an ANO are of the form 
\begin{align}
\label{eq:hidden1}
\cL_\ell(v)(x) = 
\sigma\left(
W_\ell v(x) + b_\ell + (\cK_\ell v)(x)
\right),
\end{align}
where $\cK_\ell$ is the nonlocal operator given by averaging, $\cK_\ell v = \fint_{\Omega} v(y) \, dy$. This is a special case of the general form of neural operators defined in \cite{kovachki2021neural} (General Neural Operator), where the hidden layers are of the form \eqref{eq:hidden1}, with nonlocal operator given by integration against a matrix-valued integral kernel $K_\ell(x,y) \in \R^{\dc \times \dc}$:
\begin{align}
\label{eq:kernel}
(\cK_\ell v)(x) = \int_{\Omega} K_\ell(x,y) v(y) \, dy.
\end{align}
The particular choice $K_\ell(x,y) \equiv |\Omega|^{-1} I_{\dc\times \dc}$ with $I_{\dc\times \dc}$ the identity matrix, recovers the ANO. Hence, as an immediate consequence of the universal approximation theorems \ref{thm:universal10} and \ref{thm:universal20}, we obtain:

\begin{corollary}[General Neural Operator]
\label{cor:1}
The neural operator architecture with general integral kernel is universal in the settings of Theorem \ref{thm:universal10} and Theorem \ref{thm:universal20}.
\end{corollary}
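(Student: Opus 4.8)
The plan is to observe that the averaging neural operator is, up to an explicit choice of kernel, a sub-architecture of the general neural operator of \cite{kovachki2021neural}, so that universality of the latter follows from Theorems \ref{thm:universal10} and \ref{thm:universal20} with essentially no new analysis. The whole proof is a bookkeeping argument that identifies the ANO inside the larger class and then invokes the theorems already proved.

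First, I would fix the target operator $\Psi^\dagger$, the compact set $\mfK$, and $\epsilon > 0$ as in the hypotheses of Theorem \ref{thm:universal10} (respectively Theorem \ref{thm:universal20}). Applying that theorem produces an averaging neural operator $\Psi = \cQ \circ \cL \circ \cR$ with $\sup_{u\in\mfK}\Vert \Psi^\dagger(u) - \Psi(u)\Vert \le \epsilon$, where $\cL$ has the form \eqref{eq:hidden}. Second, I would exhibit this $\Psi$ as a general neural operator: its lifting map $\cR$ and projection map $\cQ$ are already of the NNO form \eqref{eq:R}, \eqref{eq:Q} (single-hidden-layer networks with positional encoding), which is exactly the admissible class of lifting/projection layers in \cite{kovachki2021neural}; one takes the number of hidden layers $L=1$, chooses the integral kernel $K_1(x,y)\equiv |\Omega|^{-1} I_{\dc\times\dc}$ so that $(\cK_1 v)(x) = \int_\Omega K_1(x,y) v(y)\,dy = \fint_\Omega v(y)\,dy$, sets the pointwise weight and bias equal to the $W$ and $b$ of \eqref{eq:hidden}, and uses the same activation $\sigma$ assumed throughout. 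With these choices the hidden layer \eqref{eq:hidden1} coincides with \eqref{eq:hidden}, so $\Psi$ is a general neural operator achieving accuracy $\epsilon$ over $\mfK$; since $\epsilon$ was arbitrary, universality follows.

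The only point requiring care — and the main obstacle — is purely definitional: one must confirm that the admissible kernel family in the particular formulation of the general neural operator under consideration contains (or exactly represents) the constant kernel $|\Omega|^{-1} I_{\dc\times\dc}$, and that the lifting/projection layers and activation class match those used for the ANO. If the kernel family only approximates constants densely (in, say, $L^\infty(\Omega\times\Omega;\R^{\dc\times\dc})$ or $C(\bar\Omega\times\bar\Omega)$) rather than containing them, I would insert a short perturbation step: the map $K \mapsto \big(v \mapsto \int_\Omega K(\cdot,y)\,v(y)\,dy\big)$ is linear and bounded from that kernel norm into the bounded linear operators on $\cX$ (and on $\cY$), so replacing $|\Omega|^{-1} I$ by a nearby admissible kernel perturbs the hidden layer output, uniformly over $\mfK$ (which is bounded in $L^\infty$ in both theorems), by an amount controlled by the kernel error; choosing that error small and applying the triangle inequality recovers the $\epsilon$-bound. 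Modulo this routine step, the corollary is immediate from the ANO being contained in the general neural operator class.
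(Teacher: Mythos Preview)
Your proposal is correct and takes essentially the same approach as the paper: the paper simply notes that the constant kernel $K_\ell(x,y)\equiv|\Omega|^{-1}I_{\dc\times\dc}$ reduces the general neural operator to the ANO, and states the corollary as an immediate consequence of Theorems \ref{thm:universal10} and \ref{thm:universal20}. Your added perturbation step for kernel families that only approximate constants is not needed here (the general integral kernel class contains the constant kernel exactly), but it is harmless and correctly argued.
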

This result generalizes the universal approximation theorem of \cite{kovachki2021neural} to even more general input and output spaces.

\subsection{Low-Rank Neural Operator}

A particular choice of the general integral kernel \eqref{eq:kernel}, proposed in  \cite[Sect. 4.2]{kovachki2021neural}, leads to the low-rank neural operator. Here, the integral kernel $K_\ell(x,y)$ of each layer is expanded in the low-rank form,
\[
K_\ell(x,y) = \sum_{k=1}^r \phi_{\ell,j}(x) \psi_{\ell,j}(y)^T,
\]
where $\phi_{\ell,j}, \psi_{\ell,j}: \Omega \to \R^\dc$ are vector-valued functions determined by neural networks, and optimized during the training of the low-rank neural operator. If $r=\dc$, and choosing the neural networks to be constant, $\phi_{\ell,j} \equiv |\Omega|^{-1} e_j$, $\psi_{\ell,j} \equiv e_j$ with $e_j \in \R^\dc$ the $j$-th unit vector, we obtain $K_\ell(x,y) = |\Omega|^{-1} I_{\dc\times \dc}$. This recovers the ANO. Thus, we conclude:
\begin{corollary}[Low-Rank Neural Operator]
\label{cor:2}
The neural operator architecture with low-rank kernel with $r\ge \dc$ is universal in the settings of Theorem \ref{thm:universal10} and Theorem \ref{thm:universal20}.
\end{corollary}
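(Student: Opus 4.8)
The plan is to realize the ANO of Section~\ref{ssec:subclass} as a special parameter setting of the low-rank neural operator with $r \ge \dc$, and then quote Theorems~\ref{thm:universal10} and~\ref{thm:universal20}. First I would recall that each hidden layer of the low-rank architecture is of the form~\eqref{eq:hidden1} with nonlocal operator $(\cK_\ell v)(x) = \int_\Omega K_\ell(x,y) v(y)\, dy$ and separable kernel $K_\ell(x,y) = \sum_{k=1}^{r} \phi_{\ell,k}(x)\, \psi_{\ell,k}(y)^T$, where $\phi_{\ell,k},\psi_{\ell,k}:\Omega \to \R^{\dc}$ are themselves neural networks tuned during training. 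The only structural fact needed is that constant functions lie in the range of such networks — take all internal weights zero and set the output bias to the desired constant — so one may choose, for each layer $\ell$, $\phi_{\ell,k} \equiv |\Omega|^{-1} e_k$ and $\psi_{\ell,k} \equiv e_k$ for $k = 1,\dots,\dc$ ($e_k$ the $k$-th standard basis vector of $\R^{\dc}$), and, when $r > \dc$, set the remaining $r - \dc$ rank-one terms to zero. This gives $K_\ell(x,y) = \sum_{k=1}^{\dc} |\Omega|^{-1} e_k e_k^T = |\Omega|^{-1} I_{\dc\times\dc}$, i.e. exactly the kernel that Corollary~\ref{cor:1} identifies as producing $(\cK_\ell v)(x) = \fint_\Omega v(y)\, dy$.

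Second I would specialize to a single hidden layer ($L = 1$) and take the lifting and projection maps $\cR,\cQ$ of the low-rank architecture to be single-hidden-layer networks of width $\dc$ — a sub-case of the network classes the architecture allows. With these choices the hidden layer~\eqref{eq:hidden1} becomes $\cL(v)(x) = \sigma\bigl(W v(x) + b + \fint_\Omega v(y)\, dy\bigr)$, which is precisely the ANO hidden layer~\eqref{eq:hidden}, and $\Psi = \cQ \circ \cL \circ \cR$ is an averaging neural operator in the exact sense of Section~\ref{ssec:subclass}. Given a continuous $\Psi^\dagger$, a compact set of inputs, and $\epsilon > 0$, Theorem~\ref{thm:universal10} (in the $C^s$ setting) and Theorem~\ref{thm:universal20} (in the $W^{s,p}$ setting) supply an ANO — hence, via the identifications above, a low-rank neural operator — with error at most $\epsilon$. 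This proves the corollary.

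The argument is pure bookkeeping, and I do not expect a genuine obstacle. The only points that merit a line of justification are (i) that constant functions are representable by the networks parametrizing $\phi_{\ell,k},\psi_{\ell,k}$, and (ii) that restricting to $L = 1$, to shallow $\cR,\cQ$, and (when $r > \dc$) to padding with zero rank-one terms, all stay inside the low-rank neural operator's parameter space — both immediate from the definitions. Equivalently, one can note that the low-rank kernel with $r \ge \dc$ subsumes the constant kernel $|\Omega|^{-1} I_{\dc\times\dc}$, so the corollary also follows directly from Corollary~\ref{cor:1}.
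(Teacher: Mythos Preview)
Your proposal is correct and follows essentially the same approach as the paper: choose the rank-one factors to be the constants $\phi_{\ell,k}\equiv |\Omega|^{-1}e_k$, $\psi_{\ell,k}\equiv e_k$ so that $K_\ell\equiv |\Omega|^{-1}I_{\dc\times\dc}$, thereby recovering the ANO and invoking Theorems~\ref{thm:universal10} and~\ref{thm:universal20}. Your write-up is somewhat more explicit than the paper's (handling $r>\dc$ by zero-padding, noting that constants are representable by the factor networks, and spelling out the $L=1$ specialization), but the argument is the same.
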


Note that the NNO proposed in the present work differs from what is proposed in \cite[Sect. 4.2]{kovachki2021neural}: here we pre-specify the functions $\phi_{\ell,j}, \psi_{\ell,j}$, and learn only a multiplier; in contrast, the pre-existing work proposed learning these
functions as neural networks. In this sense, the NNO can be viewed as a special case
of the low-rank neural operator.

\subsection{Fourier Neural Operator}
The Fourier neural operator is obtained by restricting to a class of convolutional integral kernels in \eqref{eq:kernel}, i.e. kernels satisfying $K_\ell(x,y) = K_\ell(x-y)$, where $K_\ell(x)$ is a trigonometric polynomial of the form $K_\ell(x) = \sum_{|k|\le k_{\mathrm{max}}} \hat{P}_{\ell,k} e^{ikx}$ with $\hat{P}_{\ell,k}\in \C^{\dc \times \dc}$. Upon setting $\ell=1$, $k_{\mathrm{max}} =0$, and $\hat{P}_{1,0} = |\Omega|^{-1} I_{\dc\times \dc}$, we again recover the averaging neural operator. Thus, Theorem \ref{thm:universal10} immediately implies:
\begin{corollary}[Fourier Neural Operator]
\label{cor:3}
The Fourier neural operator architecture is universal in the settings of Theorem \ref{thm:universal10} and Theorem \ref{thm:universal20}.
\end{corollary}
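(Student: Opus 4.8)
The plan is to observe that the Fourier neural operator, for a particular choice of its depth and of its Fourier multipliers, collapses exactly onto an averaging neural operator, and then simply to invoke Theorems \ref{thm:universal10} and \ref{thm:universal20}. The key point is that the reduction uses only the \emph{zeroth} Fourier multiplier, acting against the constant mode $e^{i0\cdot x}\equiv 1$, which is a well-defined constant function on \emph{any} bounded domain $\Omega$; periodicity therefore plays no essential role, and we may work directly in the settings of the two theorems as stated, interpreting the Fourier layer of the FNO as retaining only this single constant mode.

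Concretely, I would first recall the FNO in the notation of \eqref{eq:hidden1}--\eqref{eq:kernel}: $\Psi_{\mathrm{FNO}} = \cQ \circ \cL_L \circ \cdots \circ \cL_1 \circ \cR$, where $\cR$ and $\cQ$ are lifting and projection layers of the form \eqref{eq:R} and \eqref{eq:Q} (neural networks acting pointwise, with positional encoding), and each hidden layer is $\cL_\ell(v)(x) = \sigma\big(W_\ell v(x) + b_\ell + (\cK_\ell v)(x)\big)$ with $(\cK_\ell v)(x) = \int_{\Omega} K_\ell(x-y) v(y)\,dy$ and $K_\ell(x) = \sum_{|k|\le k_{\mathrm{max}}} \hat P_{\ell,k} e^{ikx}$. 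Then I would specialize: take a single hidden layer, $L=1$; retain only the zeroth mode, $k_{\mathrm{max}} = 0$; and set $\hat P_{1,0} = |\Omega|^{-1} I_{\dc\times\dc}$. This gives $(\cK_1 v)(x) = |\Omega|^{-1}\int_{\Omega} v(y)\,dy = \fint_{\Omega} v(y)\,dy$, so that $\cL_1$ is precisely the averaging hidden layer \eqref{eq:hidden} (with $W_1 = W$, $b_1 = b$). Choosing $R$ and $Q$ to be single-hidden-layer networks of width $\dc$ makes $\cR$ and $\cQ$ coincide with the lifting and projection maps of the ANO. Hence, for this choice of parameters, $\Psi_{\mathrm{FNO}}$ is exactly an averaging neural operator in the sense of Section \ref{ssec:subclass}.

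To finish, given $\Psi^\dagger$, a compact set $\mfK$, and $\epsilon > 0$ as in Theorem \ref{thm:universal10} (respectively Theorem \ref{thm:universal20}), apply that theorem to obtain an ANO $\Psi$ with $\sup_{u\in\mfK}\Vert \Psi^\dagger(u) - \Psi(u)\Vert \le \epsilon$ in the relevant norm; by the previous paragraph $\Psi$ is realized as an FNO with $L=1$ and $k_{\mathrm{max}}=0$, which yields the claim. I do not expect a genuine obstacle here: the only point requiring a line of care is that the lifting and projection layers of the FNO, as typically parametrized, are general enough to contain the particular single-hidden-layer networks $R$ and $Q$ produced by the ANO construction — which they are, since those layers are themselves arbitrary pointwise neural networks. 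A minor bookkeeping remark is that universality for a subclass of the architecture (here $L=1$, $k_{\mathrm{max}}=0$) immediately gives universality for the full FNO, as additional depth and additional Fourier modes only enlarge the hypothesis class.
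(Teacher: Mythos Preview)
Your proposal is correct and follows essentially the same approach as the paper: specialize the FNO to $L=1$, $k_{\mathrm{max}}=0$, and $\hat P_{1,0}=|\Omega|^{-1}I_{\dc\times\dc}$ to recover the ANO, then invoke Theorems \ref{thm:universal10} and \ref{thm:universal20}. Your additional remarks on periodicity, the lifting/projection layers, and enlarging the hypothesis class are sound and simply make explicit what the paper leaves implicit.
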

This result generalizes the previous universal approximation theorem of \cite{kovachki_universal_2021} to arbitrary Sobolev input and output spaces.

\subsection{Wavelet Neural Operator}
Multi-wavelet neural operators have been introduced in \cite{gupta2021multiwavelet};
see also \cite{tripura2023}. Following the approach in \cite{gupta2021multiwavelet}, the integral kernel $K_\ell(x,y)$ in a given hidden layer is expanded in terms of a wavelet basis $\psi_0,\psi_1,\dots, \psi_N$,
\[
K_\ell(x,y) = \sum_{i,j=0}^N C^{(i,j)} \psi_i(x) \psi_j(y).
\]
\sam{Here, the coefficients $C^{(i,j)} \in \R^{\dc\times \dc}$ are themselves matrix-valued.}
We consider the case of a single hidden layer, $\ell=1$, and the setting where the span of the wavelet basis $\psi_0,\dots$ includes constant functions, and hence we assume that $\psi_0 \equiv \mathrm{const.}$ is constant. The ANO is then again a special case, and we have:
\begin{corollary}[Wavelet Neural Operator]
\label{cor:4}
If the wavelet basis includes a constant function $\psi_0(x) \equiv \mathrm{const.}$, then the wavelet neural operator architecture is universal in the settings of Theorem \ref{thm:universal10} and Theorem \ref{thm:universal20}.
\end{corollary}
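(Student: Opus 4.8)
The plan is to reuse, essentially verbatim, the reduction strategy already employed in the proofs of Corollaries \ref{cor:1}--\ref{cor:3}: I will exhibit the ANO as the wavelet neural operator obtained by freezing all but a handful of its parameters, and then invoke Theorems \ref{thm:universal10} and \ref{thm:universal20}. First, I restrict to a single hidden layer, $L=1$, so that the architecture is $\cQ\circ\cL_1\circ\cR$ with
\[
\cL_1(v)(x) = \sigma\!\left( W_1 v(x) + b_1 + \int_\Omega K_1(x,y)\,v(y)\,dy \right), \qquad K_1(x,y) = \sum_{i,j=0}^N C^{(i,j)}\psi_i(x)\psi_j(y),
\]
and observe that the wavelet neural operator uses the same style of learnable lifting and projection maps \eqref{eq:R} and \eqref{eq:Q} as the ANO, so that it suffices to reproduce the averaging hidden layer \eqref{eq:hidden}.

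Second, I use the constant basis function. By hypothesis $\psi_0\equiv c$ for some nonzero constant $c$. Setting $C^{(i,j)}=0$ for all $(i,j)\ne(0,0)$ and $C^{(0,0)} = (c^2|\Omega|)^{-1} I_{\dc\times\dc}$ collapses the kernel to $K_1(x,y)\equiv|\Omega|^{-1} I_{\dc\times\dc}$, so the nonlocal term becomes precisely the spatial average, $\int_\Omega K_1(x,y)v(y)\,dy = \fint_\Omega v(y)\,dy$. Taking in addition $W_1=W$ and $b_1=b$, the hidden layer $\cL_1$ becomes exactly the ANO layer \eqref{eq:hidden}, and the whole architecture becomes an ANO $\Psi=\cQ\circ\cL\circ\cR$ with hyperparameter $\dc$. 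Theorems \ref{thm:universal10} and \ref{thm:universal20} then furnish, for any continuous $\Psi^\dagger$ and any $\epsilon>0$, an ANO — hence a wavelet neural operator of the above restricted form — achieving accuracy $\epsilon$ over the relevant compact set, which is the claim.

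There is no genuine analytic difficulty here; the argument is pure bookkeeping, since the expansion $\sum_{i,j} C^{(i,j)}\psi_i(x)\psi_j(y)$ is already a special case of the separable/low-rank kernels covered by Corollaries \ref{cor:1} and \ref{cor:2}. The only point deserving a word of care is the reading of the hypothesis: what is needed is that some basis function be a \emph{nonzero constant function}, so that averaging is literally representable, rather than merely that constants lie in the closed span of the $\psi_i$; this is exactly what "$\psi_0 \equiv \mathrm{const.}$" supplies, and it holds automatically for the standard multiresolution constructions whose level-zero scaling function is constant (e.g. Haar). A secondary, equally routine, check is that the wavelet neural operator's lifting and projection layers are at least as expressive as those of the ANO, which is the case for the constructions of \cite{gupta2021multiwavelet,tripura2023}.
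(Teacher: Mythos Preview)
Your proposal is correct and follows essentially the same approach as the paper: the paper's argument (given in the paragraph preceding the corollary rather than as a formal proof) likewise restricts to a single hidden layer, uses the assumption $\psi_0\equiv\mathrm{const.}$ to reduce the wavelet kernel to the averaging kernel, and concludes that the ANO is a special case so that Theorems~\ref{thm:universal10} and~\ref{thm:universal20} apply. Your version is simply more explicit about the choice $C^{(0,0)}=(c^2|\Omega|)^{-1}I_{\dc\times\dc}$ and the bookkeeping, which the paper leaves implicit.
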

This result provides a first universal approximation theorem for wavelet neural operators, and thus gives a firm theoretical underpinning for the methodology proposed in \cite{gupta2021multiwavelet}.

\subsection{Laplace Neural Operator}

Recently another alternative to the Fourier neural operator has been proposed in \cite{laplaceno}; this so-called \emph{Laplace neural operator}, is applicable to general bounded domains $\Omega \subset \R^d$, and indeed extends to functions defined on quite general smooth manifolds. The approach of \cite{laplaceno} employs an expansion in a (truncated) eigenbasis  of the Laplacian, $\phi_0, \phi_1, \dots, \phi_{K} \in L^2(\Omega)$, and defines the nonlocal kernel $\cK_\ell$ in the $\ell$-th hidden layer \eqref{eq:hidden1} by an expression of the form,
\[
\cK_\ell v := \sum_{k=0}^K \langle T_{\ell,k} v,\phi_k \rangle_{L^2} \phi_k,
\]
where $T_{\ell,k} \in \R^{\dc\times \dc}$ is a learnable matrix multiplier for $k=0,\dots, K$.

If the eigenbasis is computed using, for example, homogeneous Neumann
boundary conditions, then the constant function $\phi_0(x) \equiv 1/\sqrt{|\Omega|}$ is the eigenfunction corresponding to the lowest eigenvalue $\lambda_0 = 0$ of the Laplacian, this architecture reproduces the averaging neural operator with the particular choice $T_{\ell,0} := I_{\dc\times \dc}$, and $T_{\ell,k} :=0$, for $k\ge 1$. Thus, we conclude from the universality of averaging neural operators, a result which places the
Laplace neural operator \cite{laplaceno} on a firm theoretical basis:
\begin{corollary}[Laplace Neural Operator]
\label{cor:5}
The Laplace neural operator architecture is universal in the settings of Theorem \ref{thm:universal10} and Theorem \ref{thm:universal20}.
\end{corollary}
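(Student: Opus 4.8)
The plan is to exhibit the averaging neural operator as a special case of the Laplace neural operator, so that universality transfers immediately from Theorems \ref{thm:universal10} and \ref{thm:universal20}. The argument is essentially bookkeeping: specialize the eigenbasis, the multipliers, the number of hidden layers, and the lifting/projection networks so that the Laplace-neural-operator architecture collapses exactly to the ANO of Section \ref{ssec:subclass}.

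First I would fix the eigenbasis $\phi_0, \phi_1, \dots, \phi_K$ to be eigenfunctions of the Laplacian on $\Omega$ subject to homogeneous Neumann boundary conditions. With this choice the lowest eigenvalue is $\lambda_0 = 0$, with corresponding normalized eigenfunction $\phi_0(x) \equiv 1/\sqrt{|\Omega|}$, the constant function. (On a closed manifold the constant is automatically the ground state, and the same reasoning applies there.) Second, I would restrict to a single hidden layer, $L=1$, and choose the multipliers $T_{1,0} := I_{\dc\times\dc}$ and $T_{1,k} := 0$ for $k\ge 1$. The nonlocal operator then collapses to
\[
\cK_1 v = \langle T_{1,0} v, \phi_0\rangle_{L^2} \phi_0 = \frac{1}{|\Omega|}\int_\Omega v(y)\,dy = \fint_\Omega v(y)\,dy,
\]
interpreted as the constant function taking that value, so that the hidden layer $\cL_1(v)(x) = \sigma\!\left(W_1 v(x) + b_1 + (\cK_1 v)(x)\right)$ becomes precisely the averaging hidden layer \eqref{eq:hidden}.

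Third, I would take the lifting and projection maps $\cR$ and $\cQ$ of the Laplace neural operator to be single-hidden-layer neural networks of width $\dc$, exactly as in the definition of the ANO (if the Laplace-neural-operator implementation allows deeper lifting/projection networks, a single-hidden-layer network is a special case of these, so nothing is lost). With these choices $\Psi = \cQ \circ \cL_1 \circ \cR$ is literally the ANO architecture. Theorems \ref{thm:universal10} and \ref{thm:universal20} then furnish, for any continuous $\Psi^\dagger$, any admissible compact set $\mfK$, and any $\epsilon>0$, an ANO instance approximating $\Psi^\dagger$ to within $\epsilon$ in the relevant norm; since that instance is a Laplace neural operator with the parameters above, the corollary follows.

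There is essentially no genuine obstacle here beyond this bookkeeping; the only point requiring attention is the choice of boundary conditions for the eigenbasis, which must admit a nonzero constant eigenfunction — Neumann (or periodic, or none on a closed manifold) works, whereas a homogeneous Dirichlet eigenbasis would not. If one insists on a Dirichlet eigenbasis, the corollary would require a supplementary step (approximating the constant by a finite eigenfunction expansion and tracking the resulting error), which I would flag but not carry out, since it falls outside the stated setting.
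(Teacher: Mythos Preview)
Your proposal is correct and follows essentially the same approach as the paper: the paper's argument (given in the paragraph preceding the corollary) is precisely to take the Neumann eigenbasis so that $\phi_0 \equiv 1/\sqrt{|\Omega|}$ is constant, set $T_{\ell,0}=I_{\dc\times\dc}$ and $T_{\ell,k}=0$ for $k\ge 1$, and observe that the Laplace neural operator then reduces to the ANO. Your additional remarks on the lifting/projection layers and the Dirichlet caveat are accurate elaborations not spelled out in the paper.
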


\subsection{DeepONet}
In the course of proving universality for the averaging neural operator, outlined in Section \ref{ssec:sketch}, we prove several approximation results which demonstrate connections with other architectures, such as the Deep Operator Network (DeepONet) architecture \cite{lu_learning_2021}. We recall that a DeepONet is based on two ingredients which we now detail. Firstly, a set of linear functionals $\ell = (\ell_1,\dots, \ell_m): \cX(\Omega;\R^k) \to \R^{m \times k}$ have to be specified; often, these functionals are defined by simple point evaluation, e.g. $\ell_j(u) = u(x_j)$, $j=1,\dots, m$. Secondly, two neural networks $\beta: \R^{m \times k} \to \R^p$ (the branch net) and $\tau: \Omega \to \R^{p \times k'}$ (the trunk net) are introduced, giving rise to an operator of the form
\begin{align}
\label{eq:don}
\Psi: u \mapsto \sum_{k=1}^p \beta_k(\ell(u)) \tau_k.
\end{align}
In practice, the networks $\beta$, $\tau$ are trained from given data consisting of input- and output-pairs $(u_1,\Psi^\dagger(u_1)), \dots, (u_N, \Psi^\dagger(u_N))$, where $\Psi^\dagger: \cX \to \cY$ is the underlying (truth) operator. Given a compact set $\mfK\subset \cX$, the following approximation error is of interest:
\begin{align}
\label{eq:don-err}
\sup_{u\in \mfK}
\left\Vert
\Psi^\dagger(u) - \sum_{k=1}^p \beta_k(\ell(u)) \tau_k 
\right\Vert_{\cY}.
\end{align}
Comparing \eqref{eq:don-err} with \eqref{eq:psisimple0}, we observe that the composition $\beta_k \circ \ell: \cX \to \R$ can be thought of as providing an approximation to the nonlinear functional $\alpha_k$, whereas the trunk net $\tau_k$ approximates $\phi_k$. 

On unstructured domains, the branch net $\beta: \R^{m \times k} \to \R^p$ is often chosen as a fully connected neural network. The analysis in the present work suggests an alternative choice where $\beta\circ \ell$ is replaced by a functional $\tilde\alpha: \cX \to \R^p$, defined by
\begin{align}
\label{eq:don-alpha}
\tilde\alpha(u) := \sigma\left(
\fint_{\Omega} R(u(x),x) \, dx
\right),
\end{align}
where $R: \R^k \times \Omega \to \R^p$ is an ordinary neural network. In practice, the integral average can be replaced by a sum over a finite number of sensor points $x_1,\dots, x_m$, resulting in a mapping of the form
\[
\tilde\alpha(u) 
= 
\sigma\left(
 \textstyle{\sum_{j=1}^m R(u(x_j),x_j) }
\right).
\]
A variant of DeepONet is then obtained by defining $\tilde{\Psi}: \cX \to \cY$ by $\tilde\Psi(u) := \sum_{k=1}^p \tilde\alpha_k(u) \tau_k$. Our analysis implies that this operator learning architecture $\tilde\Psi$ is universal in the settings of Theorem \ref{thm:universal10} and \ref{thm:universal20}.

\subsection{NOMAD}
Going beyond the previous subsection we see that our analysis is also closely linked to a recent extension of DeepONets, called ``Nonlinear Manifold Decoder'' (NOMAD) \cite{seidman2022nomad}. In this approach, the functionals $\ell_1,\dots,\ell_m: \cX \to \R^{m\times k}$ and the branch net $\beta: \R^{m\times k} \to \R^p$ are retained, but the linear expansion in \eqref{eq:don} is replaced by a nonlinear neural network $Q: \R^p \times \Omega \to \R^{k'}$. The resulting NOMAD operator $\Psi: \cX \to \cY$ is defined by
\begin{align}
\label{eq:nomad}
\Psi(u)(x) := Q(\beta(\ell(u)), x).
\end{align}
Replacing the composition $\beta\circ \ell$ by the alternative functional $\tilde\alpha$ defined in \eqref{eq:don-alpha}, the NOMAD approach results in an operator $\tilde{\Psi}: \cX \to \cY$ of the form 
\[
\tilde{\Psi}(u)(x) := Q(\tilde\alpha(u), x),
\]
thus recovering the specific construction of the ANO in \eqref{eq:avg-no}.

\section{Numerical Experiments}
\label{sec:N}

\begin{figure}
    \centering
    \begin{subfigure}{0.45\textwidth}
        \includegraphics[width=\textwidth]{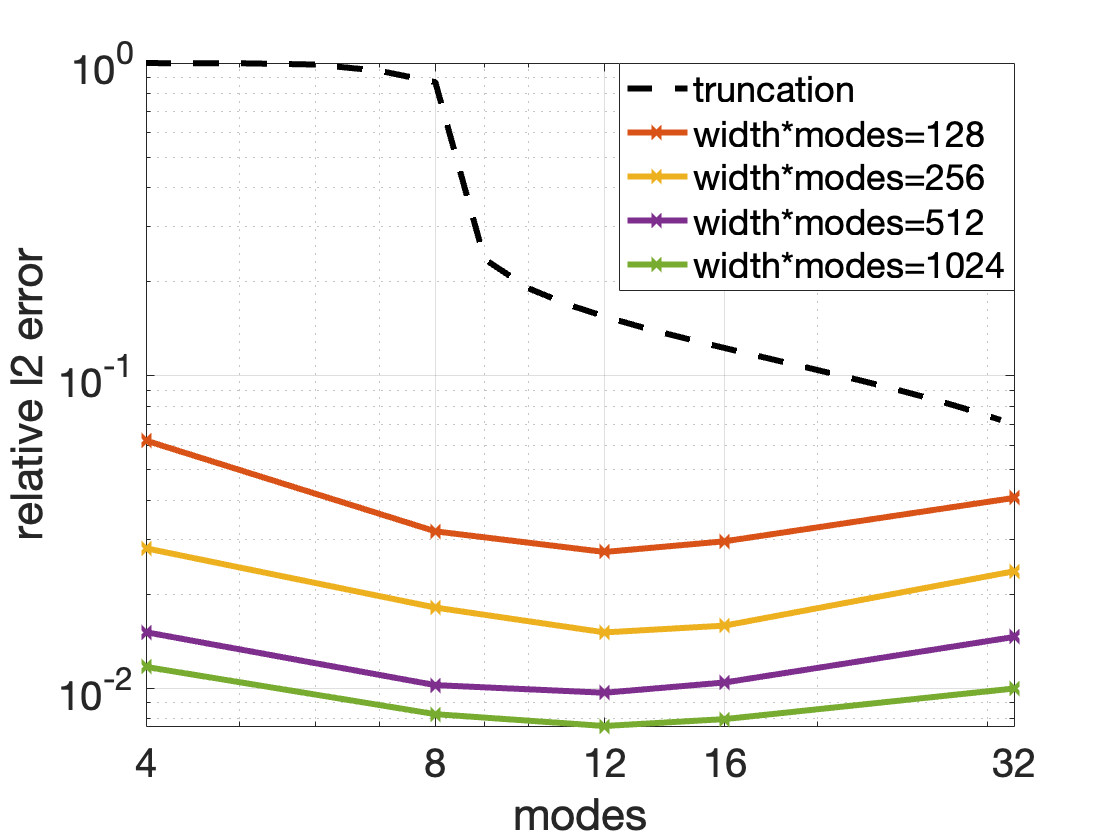}
        \caption{Helmholtz Equation}
        \label{fig:helmholtz}
    \end{subfigure}
    \begin{subfigure}{0.45\textwidth}
        \includegraphics[width=\textwidth]{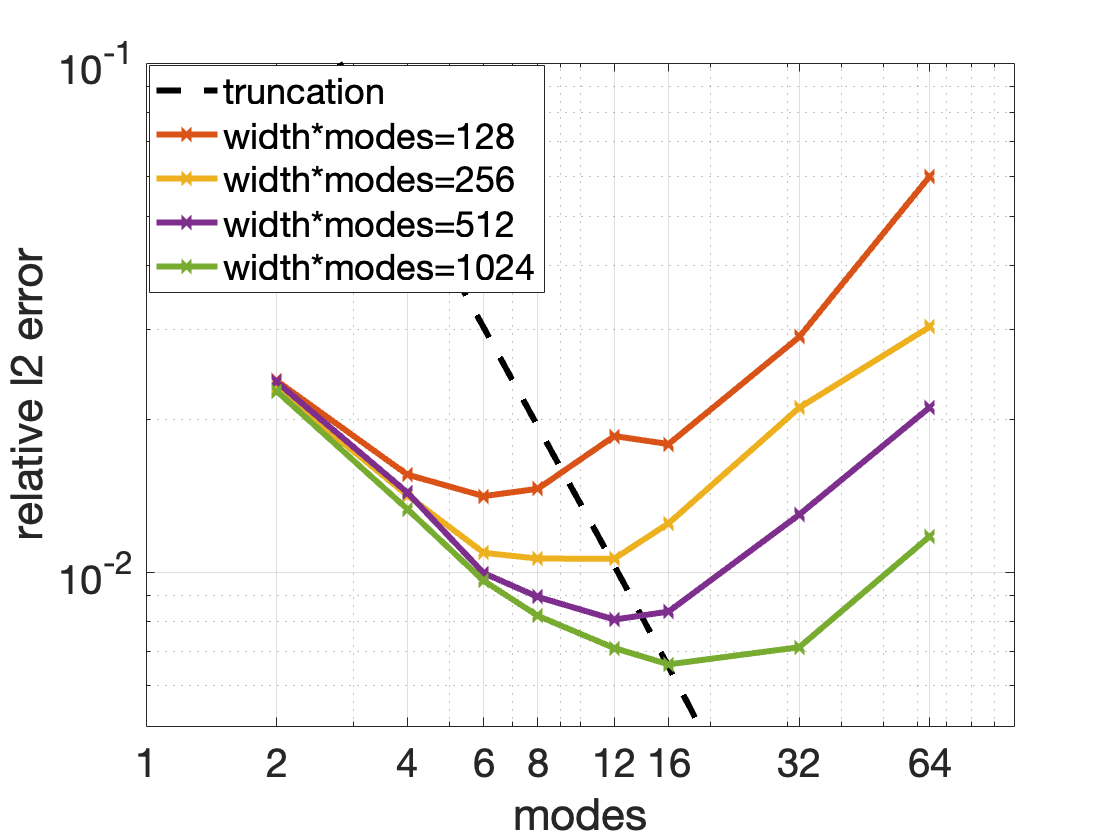}
        \caption{Darcy Equation (discontinuous)}
        \label{fig:darcy}
    \end{subfigure}
    \begin{subfigure}{0.45\textwidth}
        \includegraphics[width=\textwidth]{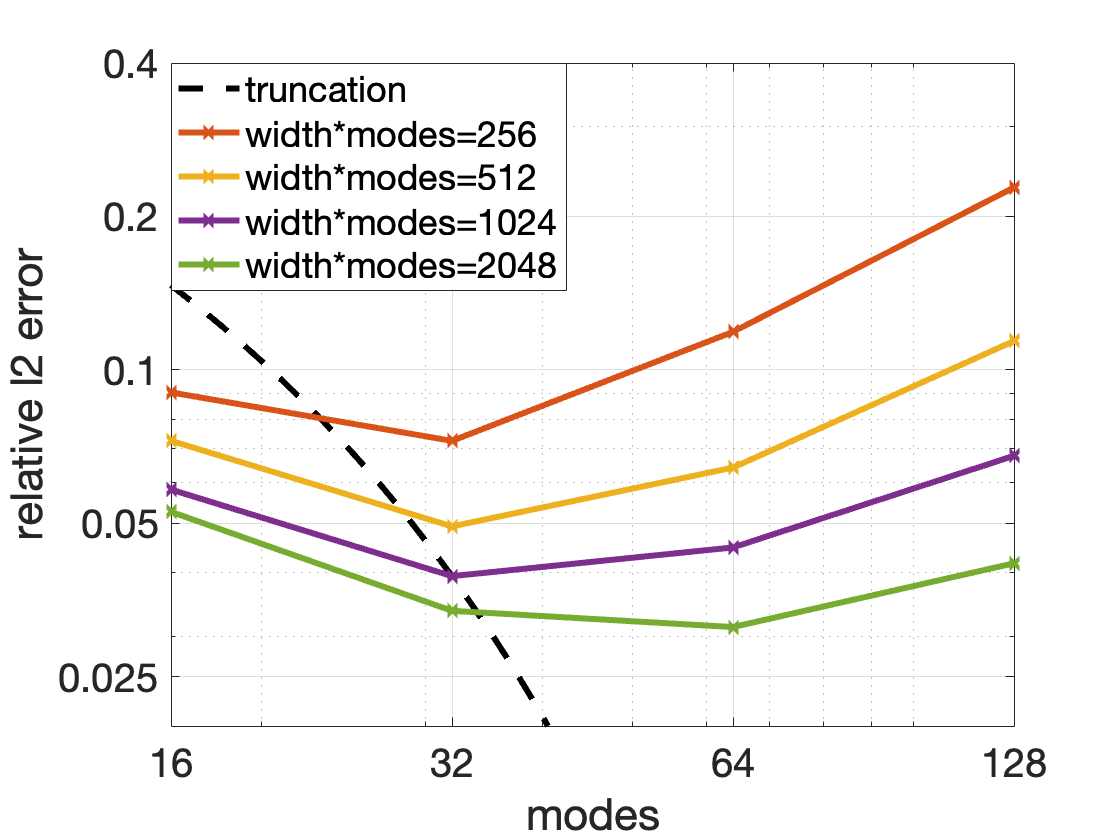}
        \caption{Kolmogorov Flow}
        \label{fig:kf}
    \end{subfigure}
        \begin{subfigure}{0.45\textwidth}
        \includegraphics[width=\textwidth]{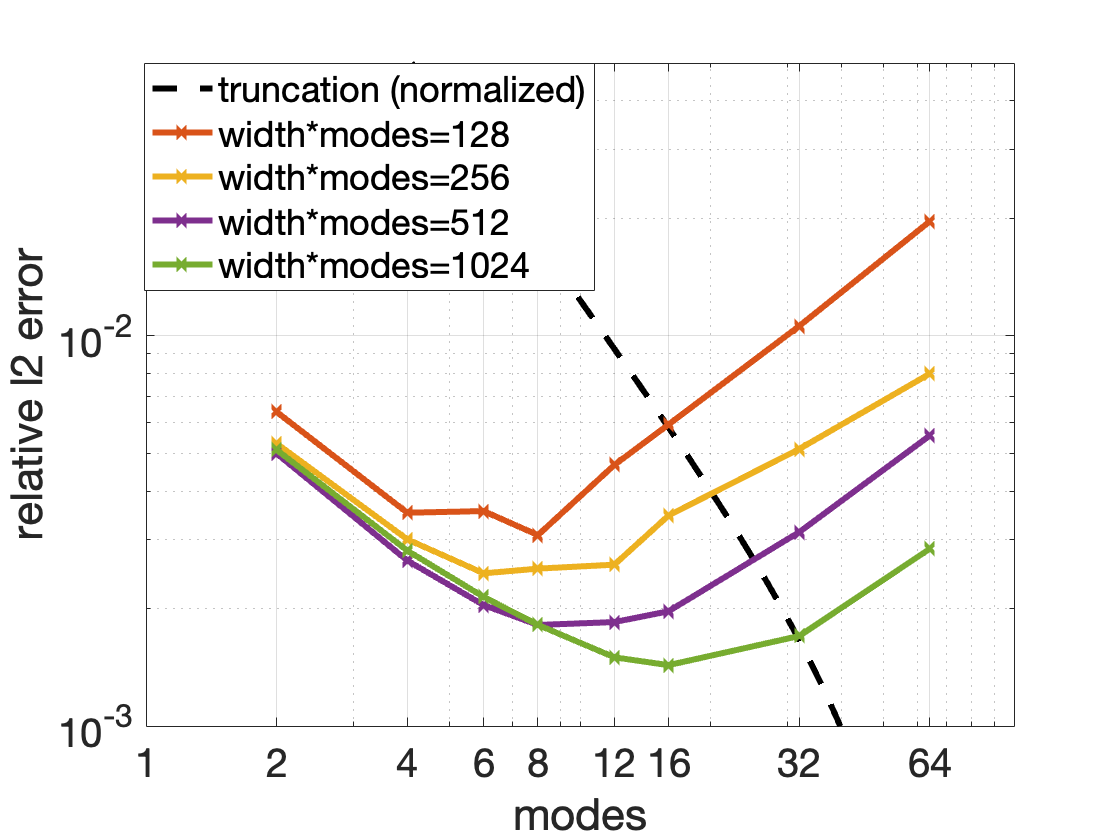}
        \caption{Darcy Equation (smooth)}
        \label{fig:darcy_lognormal}
    \end{subfigure}
    \caption{FNO model with different combinations of channel dimension (width) $d_c$ and Fourier modes $K$. In each curve, the models share roughly the same amount of model parameters
    because $d_c K$ is fixed at $C$, with $C$ changing between each curve. As $C$ increases,
    the overall error drops. The curves at any given fixed $C$ exhibit a ``U''-shape, where the valley determines the optimal choice of modes $K$. \ams{The black dotted line is the error
    given by simply truncating the truth at the given number $K$ of Fourier modes (``Fourier truncation''). Two things are notable: \zongyi{(i) for Helmholtz the optimal number of modes is fixed as computational budget increases, whilst for  the two Darcy problems and Kolmogorov flow  it grows;} (ii) in all examples the trained FNO is able to considerably improve on the Fourier truncation, when smaller numbers of Fourier modes are used; this suggests the
    nonlinear approximation theoretic mechanisms at play and resulting from the architecture.}}
    \label{fig:exp}
\end{figure}

There are two types of numerical experiments that are suggested by the analysis
in this paper: (i) to deploy the NNO for problems in
non-periodic geometries, and in particular to explore choices of problem-adapted functions $\phi_{\ell,m}, \psi_{\ell,m}$ in \eqref{eq:gexp}, aiming to develop intuition as to when 
it is an effective
operator approximator; (ii) to determine the optimal distribution of parameters to obtain a given approximation error at least cost. We defer experiments of type (i) for future work and concentrate here on experiments of type (ii).

\ams{To carry out experiments of type (ii) we work in the setting outlined at the start of subsection \ref{ssec:NNO}. The channel width is $d_c$ and we employ $L=4$ layers. The domain $\Omega=[0,1]^d$ and in all examples considered $d=2.$ The layer takes the specific form
\begin{align*}
(\cL_{\ell} v)(x) := 
\sigma\left(
W_\ell v(x) + b_\ell + \sum_{k \in \mathbb{K}} \langle T_{\ell,m} v, \exp\bigl(2\pi i \langle k,x\rangle\bigr) \rangle_{L^2(\Omega;\mathbb{C}^{d_c})} \exp\bigl(2\pi i \langle k,x\rangle\bigr)(x)
\right),
\end{align*}
where $\mathbb{K}$ is a finite square lattice set of cardinality $K^2$, centred at the origin.
Thus $|k|_{\infty} \le K.$
In our experiments we fix a measure of the total number of parameters: $d_c \times K.$
We then identify the optimal choice of number of basis functions $K$, in terms of the achieved test accuracy,
for fixed total number of parameters.}
We conduct experiments on three sets of partial differential equations to study the
balance between number of channels and number of Fourier modes: the Helmholtz Equation for wave-propagation in
a heterogeneous medium, the Darcy Equation for porous medium flow, for both smooth and piecewise constant coefficients, and the 
forced two-dimensional Navier-Stokes equation, more specifically a Kolmogorov Flow. 

For all three test cases, we use a 4-layer FNO model with the GeLU activation function, as in \cite{li_fourier_2021}, and  a cosine annealing optimizer \cite{loshchilov2016sgdr}. In the Helmholtz and Darcy equations, we normalize the input and output functions by subtracting the pointwise mean and dividing by
the pointwise standard deviation, with mean and pointwise variance computed from the training dataset; however
in the Kolmogorov Flow, we do not use normalization.
The results show that, in the range of parameter scenarios we consider, the Helmholtz Equation and Kolmogorov Flow are optimized by fixing the number of basis functions; in contrast the Darcy problem is optimized by increasing the total number of basis functions, given a fixed budget of total number of parameters. These results resonate with the
different proofs of universal approximation for the FNO, the original one using
an increasing number of Fourier modes to achieve smaller error \cite{kovachki_universal_2021} 
and the proof in this paper using only a single Fourier
mode found via integration against the constant function; more generally the
results suggest the need for deeper analysis of the deployment of parameters in
neural operators to obtain the optimal error/cost tradeoff.

\subsection{Helmholtz Equation}
We consider the Helmholtz Equation as defined in \cite{de2022cost}.
\sam{
Specifically, we consider the following PDE,
\begin{align*}
\left( - \Delta - \frac{\omega^2}{c^2} \right) u &= 0, \quad \text{in } \Omega,
\end{align*}
with frequency $\omega >0$ and variable coefficient field $c=c(x)$, on a square domain $\Omega = [0,1]^2$. We impose Neumann boundary conditions,
\[
\frac{\partial u}{\partial n} = 0, \text{ on }\partial \Omega_1, \partial \Omega_2, \partial \Omega_4, \quad
\frac{\partial u}{\partial n} = u_N, \text{ on } \partial \Omega_3.
\]
 Here $\partial \Omega_1$, $\partial \Omega_2$, $\partial \Omega_3$ and $\partial \Omega_4$ denote the lower, right, upper and left edges of the square. We fix $u_N(x) = 1_{[0.35\le x\le 0.65]}$.
 } 
The frequency $\omega$ is fixed at $10^3$. 
The target operator maps the wave speed field to the disturbance field $\Phi^{\dagger}:c \mapsto u$. The probability measure on the inputs is a pointwise nonlinear
transformation of a Gaussian random field: $c(x) = 20 + {\rm tanh}(\tilde{c}(x)), \tilde{c} \sim \mathcal{N}(0, 
(-\Delta + 9)^{-2})$ where the Laplace operator is equipped with homogeneous Neumann boundary conditions. We use $10^4$ instances of data pairs to train the FNO model, each with resolution $10^2\times 10^2$.
\zongyi{We use Adam optimizer and cosine annealing learning rate scheduler, with initial learning rate $1 \times 10^{-3}$ and weight decay $1 \times 10^{-4}$.}

\subsection{Darcy Equation}
The Darcy equation is widely studied in the operator learning literature and
we use the set-up employed in \cite{bhattacharya_model_2021, anandkumar_neural_2020}. \ams{We consider the following PDE,
\[
-\nabla \cdot (a \nabla u) = 1,
\]
posed in $\Omega=[0,1]^2$ and subject to homogeneous Dirichlet boundary conditions.
}
The target operator maps the coefficient field (permeability) to the solution field
(pressure) $\Phi^{\dagger}:a \mapsto u$.  We use $10^3$ data pairs to train the FNO model, each with resolution $141\times 141$. \zongyi{We use Adam optimizer and cosine annealing learning rate scheduler, with initial learning rate $1 \times 10^{-3}$ and weight decay $1 \times 10^{-4}$.}

We study two cases of the Darcy equation, differing in our choice of random input fields: 
piecewise-constant coefficients and log-normal coefficients.
For the piecewise-constant case, the coefficient functions $a$ are samples from Gaussian random field $\tilde{a} \sim \mathcal{N}(0,(-\Delta + 9)^{-2})$ again with homogeneous Neumann
boundary conditions. We apply a pointwise nonlinearity which assigns value $4$ or $12$
depending on the sign of the input: $a(x) = 4$ if $\tilde{a}(x) \leq 0$, $a(x) = 12$ if $\tilde{a}(x) > 0$. For the log-normal case, the coefficient function $a$ is sampled from a lognormal distribution, which is equivalent to the exponential of 
the same Gaussian random field:  $a(x) = \exp(\tilde{a}(x))$. 

\subsection{Kolmogorov Flow}
Our final example is  the Kolmogorov Flow with set-up as defined in \cite{li2021markov}. 
For this problem, we solve the periodic Navier-Stokes equation on a
torus \footnote{\ams{A straightforward rescaling of the domain used to define the neural operator above, from $[0,1]^d$ to $[0,2\pi]^2$, is employed.}}$\T^2 \simeq [0,2\pi]^2$
\[
\partial_t u + u \cdot \nabla u + \nabla p = \frac{1}{Re} \Delta u + \sin(ny) \hat{x},
\quad \nabla \cdot u = 0,
\]
with Reynolds number $Re = 5 \times 10^2$; this Reynolds number
leads to a challenging example in which more high-frequency structures are present in comparison to the previous cases considered in the literature. \ams{We define the vorticity
$\omega = \mathrm{curl} \, u$ and seek to learn the map from the initial vorticity field to the vorticity field one time unit later:  $\Phi^{\dagger}: \omega(t) \mapsto \omega(t+h)$. \zongyi{We choose the time unit $h=0.1$, noting that learning the solution operator on long time-horizons can be difficult
because of the chaotic behavior present at this Reynolds number. \footnote{\zongyi{Short-time solution operators can be composed using the semigroup property to make longer-term statistical predictions \cite{li2021learning}.}}}
The equations may be formulated using the vorticity-stream function formulation and the numerical
method we employ to generate the data is a pseudo-spectral method applied to this formulation.
The dataset of input-output pairs is computed from $100$ trajectories, with $80$ used for training and $20$ 
for testing. Each trajectory consists of $500$ time units;  the 
burn-in phase, during which the dynamics evolves towards to attractor, is removed from the trajectory.
We sample the initial conditions from a Gaussian random field of similar form to the two preceding examples,
but with $-\Delta$ subjected to periodic boundary conditions.
In total, we use $4 \times 10^4$ data pairs to train the FNO model, each with resolution $128\times 128$.
Note that the data is not i.i.d.}
\zongyi{We use the Adam optimizer and cosine annealing learning rate scheduler, with initial learning rate $5 \times 10^{-4}$ and weight decay $1 \times 10^{-4}$.}

\subsection{Results}

We study the accuracy of the trained FNO for different combinations of channel dimension (or width) $d_c$ and Fourier modes $K$, with a given budget of the total number of parameters.
Since the number of parameters scales as $O(d_c^2 K^2)$ for these 2-dimensional problems, we fix $d_c K = C$ as a convenient proxy to control the sizes of our models. The main theoretical result in this paper demonstrates that, in principle, universality can be achieved by simply retaining a single Fourier mode. In practice, it may be beneficial to include a larger number of modes, and we are interested in the \emph{optimal} choice of $K$ (and hence $d_c$), given a fixed model size $C$. To study this refined question empirically, we consider different model sizes, with $C=128, 256, 512, ...$, and scan over increments of the Fourier mode cut-off $K = 2, 4, 8, ...$.
The results of this empirical study are collected in Figure \ref{fig:exp}, for the four settings summarized above; in each case, the relative $L^2$-errors are plotted as a function of the number of the Fourier truncation parameter $K$, and each solid curve represents the result of
computations with a fixed number $C$ of model parameters. As a baseline, the dashed black line shows the average relative $L^2$-error that is achieved by a direct Fourier truncation of the reference output functions, as a function of $K$, \sam{i.e. the average error over the test distribution that is obtained by projecting the reference solution of the PDE to an expansion of its Fourier modes with wavenumbers $|k|_\infty \le K$.}

Overall, we observe that models with more parameters perform better. Moreover, for a fixed model size $C$, the solid curves describe a ``U'' shape as a function of $K$, implying that an optimal choice of the Fourier cut-off $K$ and channel dimension $d_c$ exists, at each given model size $C$. 
For the Helmholtz equation the optimal choice of Fourier cut-off $K$ is fixed across different model sizes, within the resolution of our experiments. The trained FNO for the Helmholtz Equation performs best with $K=12$. For the Kolmogorov Flow, the optimal choice is \zongyi{$K=32$ for $C=256,512,1024$ except for $C=2048$ where the optimal mode is $K=64$. The optimal model is the smallest $K$ larger than the corresponding Fourier truncation mode.}
A priori, our theoretical results are not informative about the \emph{optimal} distribution of parameters. However, they show that even a fixed choice of Fourier modes $K$, independent of the model size, is not in contradiction with the observed convergence to the underlying operator. This empirical observation would have been difficult to reconcile with previous universal approximation results, which require $K$ to grow without bound. 

For the Darcy Equation, the optimal choice of modes clearly shifts to the right with increasing model size. When $C=d_c K = 128$, the optimal choice of modes is $K=6$. When $d_c K = 1024$, the optimal choice of modes is $K=16$. Thus, in this case, it appears to be optimal to invest additional degrees of freedom in a combination of increasing both $K$ and $d_c$. 
An interesting empirical observation in Figure \ref{fig:exp} is that the trained FNO often beats the corresponding Fourier truncation of the true solution with the same number of modes. This is particularly apparent for the Darcy Equation, where even the FNO with a cut-off of only $K=2$ is able to achieve very accurate results, going considerably beyond the accuracy of the corresponding Fourier truncation baseline. This strongly indicates that  the trained FNO discovers a non-linear mechanism to generate a well-calibrated set of higher-order Fourier modes, beyond $K=2$. Thus, non-linear composition combined with non-locality of the lowest Fourier modes enables accurate results even with very small $K$; this is the basic mechanism which underlies our main universality result. The empirical findings thus indicate the practical relevance of the basic mechanism identified in the present work, with implications beyond universal approximation.

\begin{figure}[h]
    \centering
    \begin{subfigure}{0.45\textwidth}
        \includegraphics[width=\textwidth]{figures/helm.png}
        \caption{Helmholtz Equation}
        \label{fig:helm}
    \end{subfigure}
    \begin{subfigure}{0.45\textwidth}
        \includegraphics[width=\textwidth]{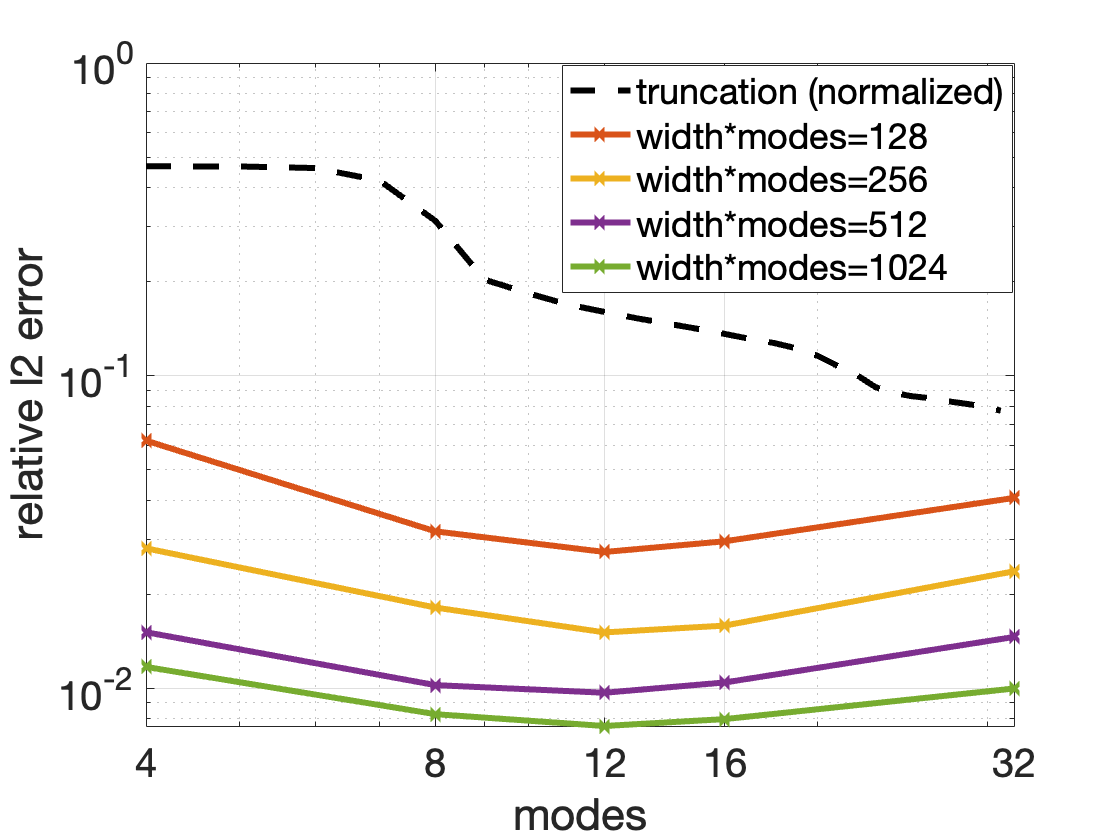}
        \caption{Helmholtz Equation (normalized)}
        \label{fig:helm_normalized}
    \end{subfigure}\\
    \hspace{0.5cm}
    \begin{subfigure}{0.4\textwidth}
        \includegraphics[width=\textwidth]{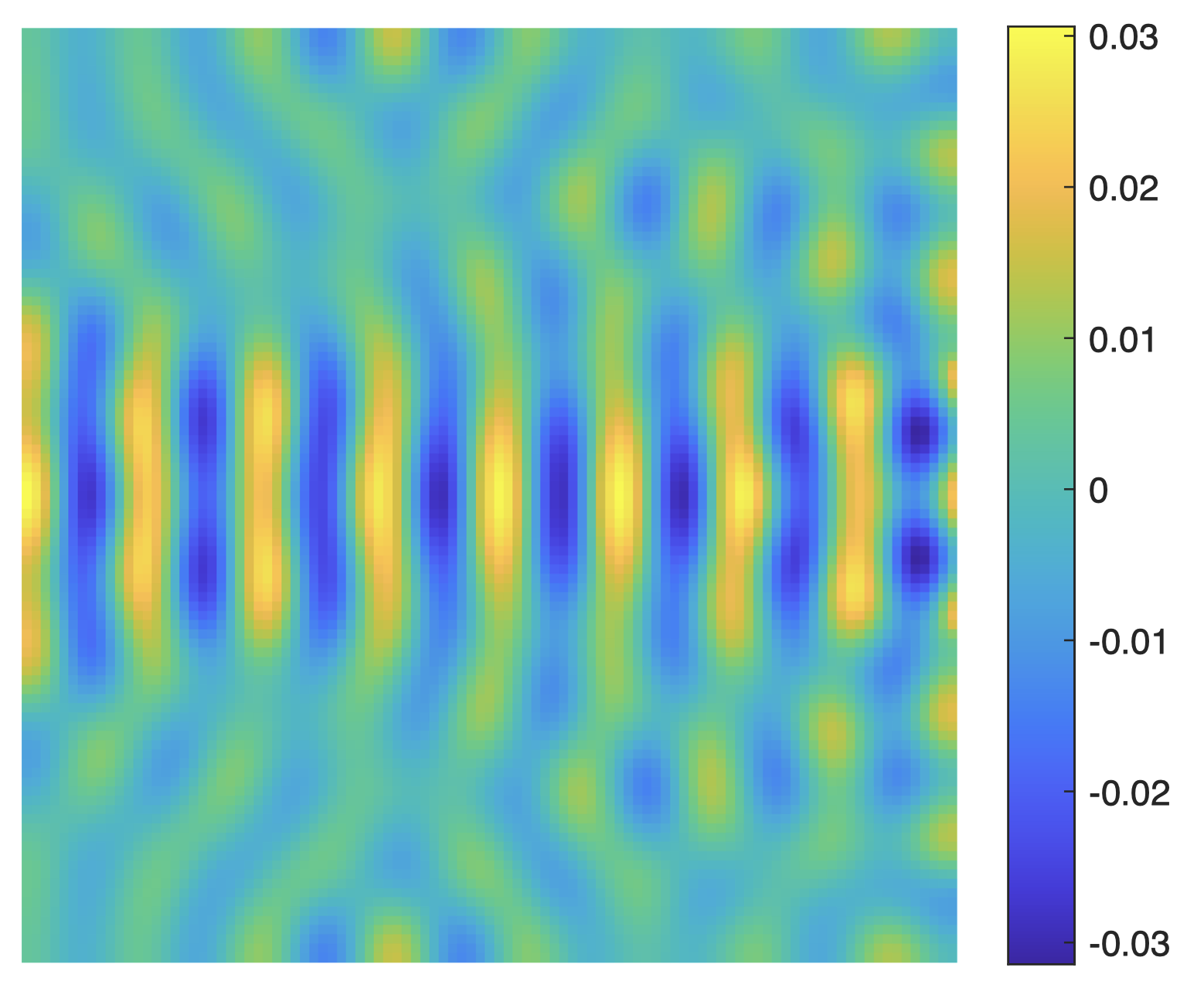}
        \caption{Mean of Helmholtz}
        \label{fig:helm_mean}
    \end{subfigure}
    \hspace{0.5cm}
    \begin{subfigure}{0.4\textwidth}
        \includegraphics[width=\textwidth]{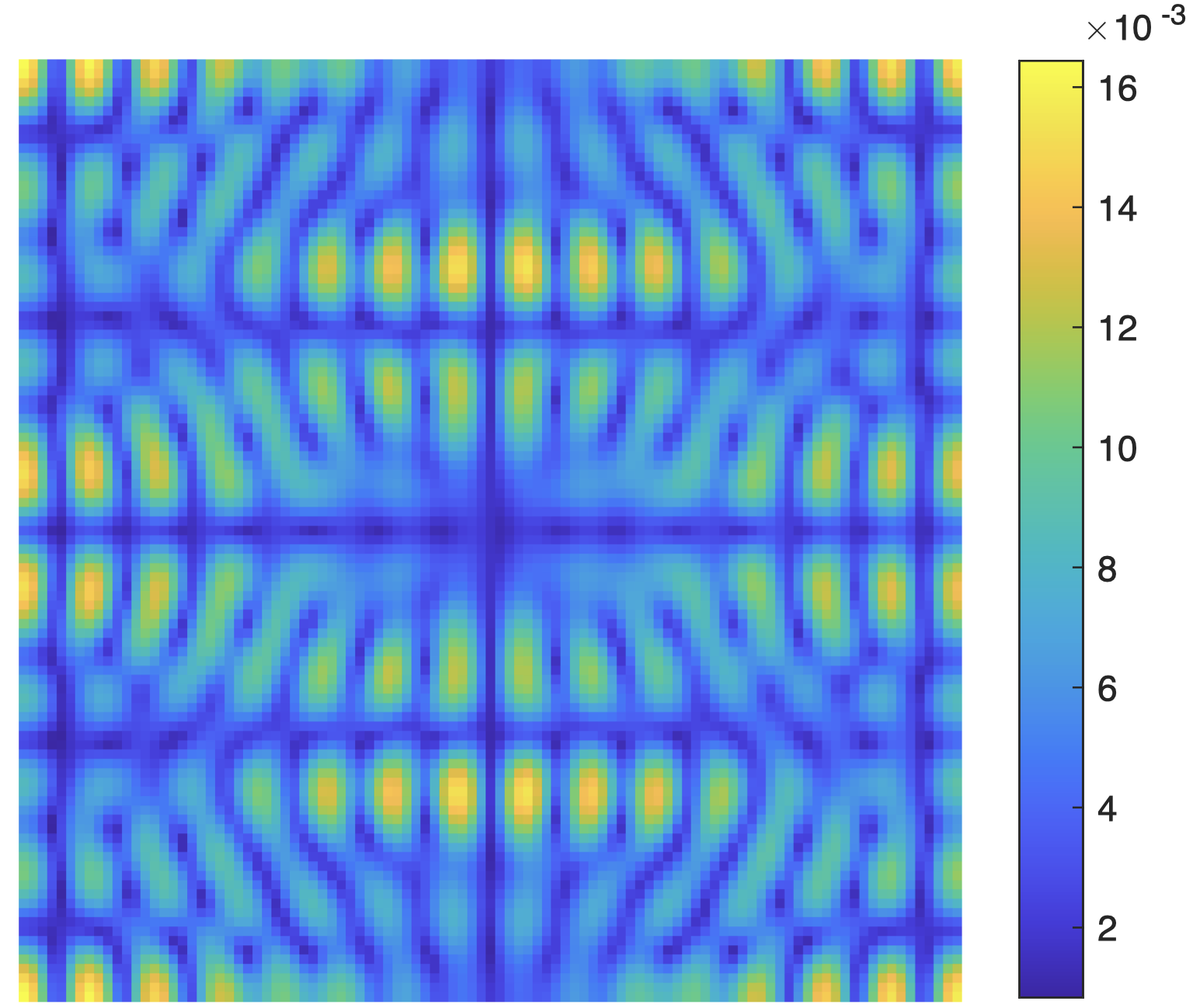}
        \caption{Standard deviation of Helmholtz }
        \label{fig:helm_std}
    \end{subfigure}
    \caption{The effect of normalization demonstrated for the Helmholtz Equation example. (a) Error rates as shown in Figure \ref{fig:exp}a). (b) Same as Figure \ref{fig:exp}a) but with error rates of the Fourier truncation including the effect of normalization. (c) and (d) show the mean and standard deviation, respectively, as
    used in the normalization process. Normalization assists in capturing lower modes but it does not affect the higher modes.}
    \label{fig:exp_normalization}
\end{figure}

We include one final set of experiments designed to check that the effect of normalization (applied on the Helmholtz and both Darcy examples) is not responsible for effects apparent in  Figure \ref{fig:exp}.
Indeed we show that the desirable approximation effects we observe there cannot simply be attributed to the pre- and post-processing of the inputs and outputs of the neural network to ensure (empirical) mean zero and pointwise standard deviation one. To verify this assertion we show, in Figure \ref{fig:exp_normalization}, the error found simply by Fourier truncation, with and without the normalization applied. 
In Figure \ref{fig:exp_normalization} (a), the truncation is done without the normalization. 
In Figure \ref{fig:exp_normalization} (b), the truncation is done with the normalization: the solution function is first normalized by the pointwise mean and standard deviation, then truncated to the requisite number of Fourier modes, and finally denormalized, mimicking the procedure used in the model training.
The truncation with normalization has a smaller error rate on the lower frequency modes, but the error at higher wavenumbers is similar to that arising without the truncation. The pointwise mean and standard deviation are shown in \ref{fig:exp_normalization} (c, d), which demonstrates that the normalization captures some lower frequency structures.
This experiment shows that normalization help to express the lower frequency components, but it does  
not account for the desirable approximation properties observed for the neural operator in  Figure \ref{fig:exp}.
The Figure \ref{fig:exp_normalization} is for the Helmholtz equation, but similar effects are observed
for the Darcy flow.

\section{Discussion}
\label{sec:D}

\sam{
This paper considers a general framework of nonlocal neural operators (NNO), which includes known frameworks such as the Fourier neural operator (FNO). In a nutshell, we derive sufficient (minimal) conditions under which specific instances of this general framework are guaranteed to possess a universal approximation property. To answer this question, the generic NNO architecture is reduced to its simplest possible form, the averaging neural operator (ANO). This ANO is based on only two minimal ingredients, both of which are indispensable for operator learning: \emph{nonlinearity}, by composition with ordinary neural networks, and \emph{nonlocality}, by averaging. Despite the simplicity of the resulting architecture, we show that the averaging neural operator (ANO) possesses a universal approximation property for a large class of operators, cf. Theorems \ref{thm:universal10} and \ref{thm:universal20}.
}

\ams{The methodology provides a unifying perspective on universal approximation, across a wide-range of neural network architectures, and also suggests
new architectures, to which universal approximation will also apply.
It allows us to unify much of the theoretical analyses of, for example, the general NO, FNO, low-rank NO, wavelet NO and Laplace-NO; the ANO is employed as an analysis tool to derive new universality results for all of these architectures, leading to Corollaries \ref{cor:1}--\ref{cor:5}. These corollaries improve on known universal approximation results for general NOs and FNOs, and provide a first rigorous theoretical underpinning for the methodology in the case of low-rank, wavelet and Laplace NOs. Furthermore we
believe that our general universality result for the ANO will serve as a useful reference to prove universality for many other emerging neural operator architectures.} In this context, we also point out close links of the ANO introduced in the present work with other recent proposed neural operator architectures, including the DeepONet and NOMAD architectures. \sam{In fact, our proof of universality of the ANO is based on identifying an inherent encoder-decoder structure in this architecture, and proving universality of such encoder-decoders.}

At a more fundamental level, the present work aims to shed new light on the role of nonlocality in operator learning for a general class of neural operators. It is shown that even very simple nonlocality in the form of an averaging operation is sufficient for the universality of neural operator architectures. As a consequence, we show that neural operator architectures possess a universal approximation property even if the nonlocality in the hidden layers is not capable of exploring the space of all integral operators, thus providing considerable scope for the introduction of nonlocality in new ways which may not rely on expansions in complete (e.g. Fourier or wavelet) bases. As a byproduct, we also further deepen our understanding of the Fourier neural operator; in particular, the present work strongly indicates that the importance of the Fourier transform for FNOs may be mainly in providing nonlocality to the architecture, with the particular choice of a Fourier basis playing only a subordinate role.

The present work provides substantial motivation to further study the optimal combination of nonlinearity and nonlocality in operator learning. First empirical results in this direction are presented in Section \ref{sec:N}, focusing on the Fourier neural operator. Here, the relevant parameters are the channel dimension and the number of Fourier modes, corresponding to different distributions of nonlinearity and nonlocality in the architecture. Our empirical results show that the optimal distribution of parameters is likely problem dependent, but strongly indicate that the basic mechanism underlying our proof of universality is also relevant in practice; further numerical results, in tandem with extended analysis, will be necessary to gain deeper insight into the optimal trade-off, in terms of cost versus accuracy, between non-linearity and non-locality. 

\sam{
It is also interesting to compare and contrast the ANO architecture with convolutional neural networks (CNN) and convolutional neural operators (CNO). The ANO employs a global average, applied to a non-linear transformation of the input function, to encode relevant features; in contrast, convolutional architectures like CNN and CNO are based on a composition of non-linear transformations, convolutions with localized kernels, and potentially interspersed with max-pooling (or average pooling) operations. From the point of view of universality, we note that the ANO encoder defined via a global average in the present work, could equally well have been replaced by local averaging over a set of patches covering the domain, providing a link to average pooling in CNNs. One major difference between ANO and CNN/CNO is that the layerwise nonlocality is not global for the latter, thus requiring a minimal depth to communicate nonlocal information across the entire domain. We believe it would be interesting to extend the analysis of the present work to CNOs, in the future.}

The fact that universality can be obtained even with a simple global average as the only nonlocal ingredient in the hidden layers opens the way for the exploration of alternative neural operator architectures with \emph{guaranteed universality}, relying on different choices of how nonlocality is introduced. The results of this work are also particularly relevant for future extensions of neural operator architectures to problems with non-periodic geometries and mappings between functions on different domains. We plan to expand on this topic, and the potential benefits of specific choices, in the future. 

\sam{In another direction, it would also be interesting to derive quantitative error estimates for ANO, to complement the qualitative analysis of the present work. Our initial numerical experiments clearly indicate practical benefits, in terms of the accuracy of the trained models, of introducing nonlocality beyond simple averaging; it would be interesting to see if this is also born out by a quantitative analysis. However, in view of the great (theoretical) approximation power of neural networks as function approximators, here used to define the encoder and decoder in the ANO, it is possible that an explanation of the observed empirical differences would require going beyond a purely approximation theoretic viewpoint, and taking into account their training by gradient-based optimization on finite data.}

\vspace{0.1in}
\noindent{\bf{Acknowledgments}}
The work of SL is supported by Postdoc.Mobility grant P500PT-206737 from the Swiss National Science Foundation.
ZL is supported in part by the PIMCO Fellowship and Amazon AI4Science Fellowship. 
AMS is grateful for support through a Department of Defense Vannevar Bush Faculty Fellowship 
and from the Air Force Office of Scientific Research under MURI award number FA9550-20-1-0358 (Machine Learning and Physics-Based Modeling and Simulation).

\vspace{0.1in}

\bibliographystyle{abbrv}
\bibliography{bibliography}

\raggedbottom
\newpage

\appendix

\section{Detailed Proof Of Universal Approximation For ANO}
\label{app:pf-univ}

\subsection{Analysis Ingredients: Extension And Mollification}
\label{app:moll}

\paragraph{Lipschitz Domains}

We recall that a subset $\Omega \subset \R^d$ is a \define{bounded Lipschitz domain}, if $\Omega$ is open, the closure $\overline{\Omega}$ is compact, and the boundary $\partial \Omega = \overline{\Omega} \setminus \Omega$ is at least ``Lipschitz regular'', in the sense that it can locally be thought of as the graph of a Lipschitz map. We refer to \cite[Section 2.1]{ErnGuermond2016} for a rigorous mathematical definition. Lipschitz domains cover most domains encountered in physical applications. In particular, any bounded domain with piecewise smooth boundary is a Lipschitz domain.

\paragraph{Function Extension for Lipschitz Domains}

In our proofs, it will sometimes be convenient to extend a given function $u: \Omega \to \R$, defined on a domain $\Omega \subset \R^d$, to a function $u: \R^d \to \R$, defined on all of $\R^d$. The following lemma shows that this is possible, while preserving smoothness of $u$, and relies on a classical result of Stein \cite[Chapter 6, Theorem 5]{stein1970singular}:

\begin{lemma}[{Periodic extension operator, see e.g. \cite[Lemma 41]{kovachki_universal_2021}}]
\label{lem:periodic_extension}
Let \(\Omega \subset \R^d\) be a bounded Lipschitz domain. There exists a continuous, linear operator $\cE : W^{s,p}(\Omega) \to W^{s,p}_\per(B)$ for any $s \geq 0$ and $p\in [1,\infty]$, where $B \subset \R^d$
is a bounded hypercube containing $\Omega \subset B$,
such that for any $u \in W^{s,p}(\Omega)$:
\begin{enumerate}
	\item $\mathcal{E}(u)|_\Omega = u$;
	\item $\mathcal{E}(u) \in W^{s,p}_\per(B)$ is periodic on \(B\) (including its derivatives). 
\end{enumerate}
Furthermore, $\cE$ maps continuously differentiable functions to continuously differentiable functions, i.e. $\cE(C^s(\bar{\Omega})) \subset C^s_\per(B)$ and hence defines a continuous mapping $\cE: C^s(\bar{\Omega}) \to C^{s}_\per(B)$.
\end{lemma}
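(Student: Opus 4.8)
The plan is to build $\cE$ as a three-step composition: a total extension to $\R^d$, multiplication by a cutoff, and periodization. First I would invoke Stein's extension theorem \cite[Chapter 6, Theorem 5]{stein1970singular} for the bounded Lipschitz domain $\Omega$: it furnishes a \emph{single} linear operator $E$, independent of $s$ and $p$, that is bounded $E: W^{s,p}(\Omega) \to W^{s,p}(\R^d)$ for every integer $s \ge 0$ and every $p \in [1,\infty]$, with $E(u)|_\Omega = u$, and which moreover restricts to a bounded map $C^s(\bar{\Omega}) \to C^s(\R^d)$ (this last point follows from the explicit reflection-type construction of Stein's operator).

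Second, since $\bar{\Omega}$ is compact I would fix an open hypercube $B \subset \R^d$ with $\bar{\Omega} \Subset B$ and choose a cutoff $\chi \in C_c^\infty(\R^d)$ with $\chi \equiv 1$ on a neighbourhood of $\bar{\Omega}$ and $\supp \chi \Subset B$. Multiplication by $\chi$ is a bounded linear operator on $W^{s,p}(\R^d)$ and on $C^s(\R^d)$ (Leibniz rule, with operator norm depending only on $\chi$, $s$, $d$), so $u \mapsto \chi \cdot E(u)$ is linear, bounded into the respective space, and still satisfies $(\chi \cdot E(u))|_\Omega = u$ because $\chi \equiv 1$ there.

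Third, writing $B$ (up to translation) as a cube of side length $\ell$, I would set $\cE(u) := \sum_{m \in \ell\Z^d} (\chi \cdot E(u))(\slot - m)$. Since $\supp(\chi \cdot E(u)) \Subset B$, on any given period cell this sum reduces to a single translate, so $\cE(u)$ is well defined, $\ell\Z^d$-periodic, and inherits the $W^{s,p}$- (respectively $C^s$-) regularity and the corresponding norm bound of $\chi \cdot E(u)$; thus $\cE(u) \in W^{s,p}_\per(B)$ (respectively $\cE(u) \in C^s_\per(B)$) and $\cE$ is linear and continuous. Finally, the translates $\supp\chi - m$ for $0 \ne m \in \ell\Z^d$ are disjoint from $B \supset \Omega$, so $\cE(u)|_\Omega = (\chi \cdot E(u))|_\Omega = u$.

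The only real care needed — and the step where the argument would go wrong if done sloppily — is the last one: the cutoff's support must sit strictly inside a single period cell ($\supp\chi \Subset B$, made possible by the strict containment $\bar{\Omega} \Subset B$), both so that periodization does not deposit spurious copies of the data over $\Omega$ and so that the periodized function, together with all its derivatives up to order $s$, glues together across cell boundaries. Everything else is routine bookkeeping, already carried out in \cite[Lemma 41]{kovachki_universal_2021}, to which the remaining verifications can be deferred.
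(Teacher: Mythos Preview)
The paper does not supply its own proof of this lemma; it simply records the statement with a pointer to \cite[Lemma 41]{kovachki_universal_2021} and notes that the construction rests on Stein's extension theorem \cite[Chapter 6, Theorem 5]{stein1970singular}. Your three-step construction (Stein extension, smooth cutoff compactly supported in the period cell, periodization by lattice translates) is exactly the standard argument behind that cited result, and your identification of the one delicate point --- that $\supp\chi \Subset B$ is needed both to avoid overlapping translates on $\Omega$ and to ensure the periodized function glues smoothly across cell boundaries --- is correct; so your proposal is sound and aligned with what the paper invokes.
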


\paragraph{Mollification (Smoothing) of Functions on Lipschitz Domains}
Recall that there exists a smooth mapping (a mollifier) $\rho: \R^d \to \R$, 
with properties $\rho(x)\in [0,1]$ for all $x \in \R^d$, $\rho(0) = 1$, and $\rho(x) = 0$ for $|x|\ge 1$. Furthermore we can normalize $\rho$ to enforce $\int_{\R^d} \rho(y) \, dy = 1$. Any such $\rho$ defines a family of functions $\rho_\delta(x) := \delta^{-d} \rho(x/\delta)$, supported in a $\delta$-ball around the origin. Fixing such a family, we recall that the $\epsilon$-mollification of a function $u: \R^d\to \R$, $u\in L^1(\R^d)$ is defined by a convolution $u_\delta(x) := (u\ast \rho_\delta)(x)$, i.e.
\[
u_\delta(x) = \int_{\R^d} u(x-y) \rho_\delta(y) \, dy.
\]
It is well-known that $u_\delta$ is a smooth function for $\delta > 0$, and that $u_\delta \to u$ in spaces such as arbitrary Sobolev spaces $W^{s,p}(\R^d)$, or spaces of continuously differentiable functions $C^s(\R^d)$, with respect to the respective norms. Mollification as defined above is a useful tool in analysis, but is not well-adapted to bounded domains. The papers \cite{blouza2001up,ErnGuermond2016} develop a useful variant of mollification for bounded Lipschitz domains $\Omega \subset \R^d$, where additional care is needed to deal with the behavior near the boundary.

The following results follow from the proof of \cite[Theorem 2.4]{blouza2001up}, or can alternatively be obtained from the construction and arguments in \cite{ErnGuermond2016}:
\begin{lemma}[Adapted mollification in a Lipschitz domain]
\label{lem:moll}
Let $\Omega \subset \R^d$ be a bounded Lipschitz domain. There exists a one-parameter family $\cM_\delta$ of ``mollification'' operators, indexed by $\delta \ge 0$ and defining a linear mapping $\cM_\delta: L^1(\Omega; \R^k) \to L^1(\Omega;\R^k)$ for $\delta > 0$, such that:
\begin{enumerate}
\item \sam{For $\delta = 0$, we have $\cM_{0}u = u$ for all $u\in L^1(\Omega;\R^k)$; for $\delta > 0$, $\cM_{\delta}$ is smoothing, in the sense that it defines a mapping $\cM_\delta: L^1(\Omega;\R^k) \to C^\infty(\bar{\Omega};\R^k)$.}
\item For any fixed $\delta > 0$, integer $s,r \ge 0$, the mapping,
\[
\cM_\delta: C^s(\bar{\Omega};\R^k) \to C^r(\bar{\Omega};\R^k),
\]
is continuous. Furthermore, if $r\le s$, then the operator norm is uniformly bounded,
that is
\[
\sup_{\delta > 0} \Vert \cM_\delta \Vert_{C^s \to C^r} < \infty.
\]
\item If $s\ge 0$ and $\mfK \subset C^s(\bar{\Omega}; \R^k)$ is a compact subset, then \sam{for any $\delta_0 \ge 0$,}
\[
\lim_{\delta \to \delta_0} \sup_{u\in \mfK} \Vert \cM_{\delta_0} u - \cM_\delta u \Vert_{C^s} = 0.
\]
\item For fixed $\delta > 0$, integer $s,r \ge 0$, and $p\in [1,\infty)$, the mapping
\[
\cM_\delta: W^{s,p}(\Omega;\R^k) \to W^{r,p}(\Omega;\R^k)
\]
is continuous.
\item If $s \ge 0$, $p \in [1,\infty)$ and $\mfK \subset W^{s,p}({\Omega}; \R^k)$ is a compact subset, \sam{and $\delta_0 \ge 0$,} then 
\[
\lim_{\delta \to \delta_0} \sup_{u\in \mfK} \Vert \cM_{\delta_0} u - \cM_\delta u \Vert_{W^{s,p}} = 0.
\]
\end{enumerate}
\end{lemma}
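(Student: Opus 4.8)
The plan is to build the family $\cM_\delta$ by routing through the periodic (Stein) extension operator $\cE$ of Lemma~\ref{lem:periodic_extension}: this transports the problem to the torus $B\supset\Omega$, where mollification is entirely standard and there is no boundary to worry about. Concretely, I would set $\cM_0 u:=u$ and, for $\delta>0$,
\[
\cM_\delta u := \bigl(\rho_\delta \ast \cE u\bigr)\big|_{\overline{\Omega}},
\]
acting componentwise on $\R^k$-valued functions, where $\rho_\delta(x)=\delta^{-d}\rho(x/\delta)$ is the standard mollifier and $\cE u$ is regarded as a periodic function on $\R^d$. Linearity is inherited from $\cE$ and from convolution. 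Since $W^{0,1}(\Omega)=L^1(\Omega)$, Lemma~\ref{lem:periodic_extension} gives $\cE u\in L^1_\per(B)$ for $u\in L^1(\Omega;\R^k)$, hence $\rho_\delta\ast\cE u\in C^\infty(\R^d;\R^k)$, and its restriction to $\overline{\Omega}$ lies in $C^\infty(\overline{\Omega};\R^k)$ --- this is item~1. It is important that $\cE$ be a \emph{single} extension operator serving all scales $C^s$ and $W^{s,p}$ simultaneously (Stein's operator has this property), so that one and the same family $\cM_\delta$ satisfies all five properties; this streamlined construction is a substitute for the transport-based mollifiers of \cite{blouza2001up,ErnGuermond2016}, available to us precisely because Lemma~\ref{lem:periodic_extension} is already in hand.

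For items~2 and~4 the one elementary fact needed is that convolution with a probability density is a contraction on each $C^r$ and each $W^{r,p}$: for $f$ of class $C^r$ (resp.\ $W^{r,p}$) and $|\alpha|\le r$ one has $D^\alpha(\rho_\delta\ast f)=\rho_\delta\ast D^\alpha f$, and $\|\rho_\delta\ast g\|_{L^\infty}\le\|g\|_{L^\infty}$ (resp.\ $\|\rho_\delta\ast g\|_{L^p}\le\|g\|_{L^p}$ by Young, using $\|\rho_\delta\|_{L^1}=1$). Since restriction to $\Omega$ does not increase norms and $\cE$ is continuous, this yields, when $r\le s$, $\|\cM_\delta u\|_{C^r(\overline{\Omega})}\le C\|u\|_{C^s(\overline{\Omega})}$ (and the $W^{s,p}$ analogue), uniformly in $\delta>0$. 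When $r>s$ one writes instead $D^\alpha(\rho_\delta\ast f)=(D^\beta\rho_\delta)\ast(D^\gamma f)$ with $|\gamma|=s$, $|\beta|=|\alpha|-s$, and uses $\|D^\beta\rho_\delta\|_{L^1}=\delta^{-|\beta|}\|D^\beta\rho\|_{L^1}$; this gives continuity $C^s\to C^r$ (resp.\ $W^{s,p}\to W^{r,p}$) for each fixed $\delta>0$ with a constant of order $\delta^{-(r-s)}$, which is all items~2 and~4 ask for.

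For items~3 and~5 I would split on whether $\delta_0>0$ or $\delta_0=0$. If $\delta_0>0$: $\cM_{\delta_0}u-\cM_\delta u=\bigl((\rho_{\delta_0}-\rho_\delta)\ast\cE u\bigr)\big|_\Omega$, and distributing at most $s$ derivatives onto $\cE u$ gives $\|\cM_{\delta_0}u-\cM_\delta u\|_{C^s}\le C\|\rho_{\delta_0}-\rho_\delta\|_{L^1}\|u\|_{C^s}$ (resp.\ $W^{s,p}$); since $\mfK$ is bounded and $\delta\mapsto\rho_\delta$ is continuous into $L^1(\R^d)$, the supremum over $\mfK$ tends to $0$, and compactness is not even used. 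The case $\delta_0=0$ is the delicate one: one reduces, via $\|\cM_\delta u-u\|_{C^s(\overline{\Omega})}\le\|\rho_\delta\ast\cE u-\cE u\|_{C^s(B)}$, to showing $\sup_{f\in\cE(\mfK)}\|\rho_\delta\ast f-f\|_{C^s}\to0$. Here $\cE(\mfK)$ is compact in $C^s_\per(B)$ (continuous image of a compact set), the operators $f\mapsto\rho_\delta\ast f$ are uniformly bounded (contractions, by item~2) and converge strongly to the identity as $\delta\to0^+$, and the general principle that a uniformly bounded family of operators converging strongly converges uniformly on precompact sets finishes the argument; the $W^{s,p}$ case is identical, with $p<\infty$ being exactly what guarantees $\rho_\delta\ast f\to f$ in $W^{s,p}$ (it fails for $p=\infty$, which is why that case is excluded).

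The main obstacle is precisely the $\delta_0=0$ part of items~3 and~5: the rest is bookkeeping around classical mollification estimates, but the uniformity over the compact set $\mfK$ genuinely fails on merely bounded sets and must be extracted from the ``uniformly bounded $+$ strongly convergent $\Rightarrow$ uniform on compacta'' argument after transporting $\mfK$ to the compact set $\cE(\mfK)$ on the torus. A secondary point to watch is the use of one universal extension operator across all function-space scales, so that the single family $\cM_\delta$ satisfies simultaneously the $C^s$- and $W^{s,p}$-statements; with only scale-dependent extensions the construction would depend on $(s,p)$ and the lemma as stated would not follow.
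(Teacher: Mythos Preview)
Your proposal is correct and takes a genuinely different route from the paper. The paper does not give a self-contained proof; it defers to the boundary-adapted (transport-based) mollifiers of \cite{blouza2001up,ErnGuermond2016}, which are constructed intrinsically on $\Omega$ by convolving against a kernel whose center is shifted inward from the boundary. You instead transport the problem off $\Omega$ entirely via the Stein-type extension $\cE$ of Lemma~\ref{lem:periodic_extension} and mollify on the torus, where the boundary disappears. This is precisely the maneuver the paper's subsequent remark implicitly dismisses --- but that remark only rules out extension \emph{by zero}, which destroys $C^s$-regularity at $\partial\Omega$; your use of a regularity-preserving universal extension sidesteps that objection cleanly. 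The trade-off: your construction is more elementary and fully self-contained within the paper (it needs only Lemma~\ref{lem:periodic_extension}, already stated), whereas the Ern--Guermond mollifiers are intrinsic to $\Omega$ and do not require an ambient extension, which can matter in settings where no good extension is available --- not an issue here. Your identification of the two genuinely nontrivial points (the $\delta_0=0$ limit handled via ``uniformly bounded $+$ strongly convergent $\Rightarrow$ uniform on compacta'', and the need for a single extension operator across all $(s,p)$) is exactly right.
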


\begin{remark}
At first sight, one might think that the result of Lemma \ref{lem:moll} could easily be obtained by extending the input function $u: \Omega \to \R^k$, to a function $u: \R^d \to \R^k$ by setting $u(x) = 0$ for $\R^d\setminus \Omega$, and then use standard mollification on this extension to obtain a smooth approximant $u_\delta: \Omega \to \R^k$. While this approach ensures smoothness of the mollified function $u_\delta$, it does not ensure convergence in spaces defined with respect to the supremum norm, i.e. for $u\in C^s$ we generally have $\Vert u_\delta - u\Vert_{C^s} \not\to 0$ as $\delta \to 0$. The main technical issue is that extension by zero produces a jump discontinuity at the boundary; hence special care needs to be taken near the boundary \cite{blouza2001up,ErnGuermond2016}.

The proof is based on the boundary-adapted construction of mollification in \cite[eq. (3.4a)]{ErnGuermond2016}, and only requires arguments in real analysis. 
We will not provide a detailed proof here. 
\end{remark}

In addition to Lemma \ref{lem:moll}, we will furthermore need the following technical lemma:
\begin{lemma}
\label{lem:Kdelta}
Fix $s\ge 0$. Let $\mfK\subset C^s(\bar{\Omega};\R^k)$ be compact. Then for any $\delta > 0$, the set 
\begin{align}
\label{eq:Kdelta}
\mfK_\delta :=
\bigcup_{ \sam{0\le \delta' \le \delta}} \set{\cM_{\delta'} u}{u\in \mfK},
\end{align}
is also compact in $C^s(\bar{\Omega};\R^k)$.
\end{lemma}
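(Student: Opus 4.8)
The plan is to realize $\mfK_\delta$ as the continuous image of a compact set and then invoke the fact that continuous images of compact sets are compact. Concretely, I would consider the product space $[0,\delta]\times\mfK$, which is compact in $\R\times C^s(\bar{\Omega};\R^k)$ as a product of two compact sets, together with the evaluation map
\[
\Phi:[0,\delta]\times\mfK\to C^s(\bar{\Omega};\R^k),\qquad \Phi(\delta',u):=\cM_{\delta'}u.
\]
Unwinding the definition \eqref{eq:Kdelta} shows that $\mfK_\delta=\Phi\bigl([0,\delta]\times\mfK\bigr)$, so it suffices to prove that $\Phi$ is continuous. First I would check that $\Phi$ is well-defined with values in $C^s$: for $\delta'=0$ we have $\cM_0u=u\in\mfK\subset C^s(\bar{\Omega};\R^k)$ by Lemma \ref{lem:moll}(1), and for $\delta'>0$ the same lemma gives $\cM_{\delta'}u\in C^\infty(\bar{\Omega};\R^k)\subset C^s(\bar{\Omega};\R^k)$.

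To verify continuity of $\Phi$ it is enough, since $\R\times C^s(\bar{\Omega};\R^k)$ is a metric space, to check sequential continuity. So I would fix $(\delta',u)\in[0,\delta]\times\mfK$ and a sequence $(\delta'_n,u_n)\to(\delta',u)$ in $[0,\delta]\times\mfK$, and estimate, using linearity of $\cM_{\delta'_n}$,
\[
\Vert\cM_{\delta'_n}u_n-\cM_{\delta'}u\Vert_{C^s}
\le
\Vert\cM_{\delta'_n}(u_n-u)\Vert_{C^s}
+
\Vert\cM_{\delta'_n}u-\cM_{\delta'}u\Vert_{C^s}.
\]
For the first term, Lemma \ref{lem:moll}(2) applied with $r=s$ provides a uniform bound $C_0:=\sup_{\delta''>0}\Vert\cM_{\delta''}\Vert_{C^s\to C^s}<\infty$, and since $\cM_0=\Id$ we also have $\Vert\cM_0\Vert_{C^s\to C^s}=1$; hence $\Vert\cM_{\delta'_n}(u_n-u)\Vert_{C^s}\le\max(1,C_0)\,\Vert u_n-u\Vert_{C^s}\to 0$. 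For the second term, since $u\in\mfK$, Lemma \ref{lem:moll}(3) with $\delta_0=\delta'$ gives $\Vert\cM_{\delta'_n}u-\cM_{\delta'}u\Vert_{C^s}\le\sup_{v\in\mfK}\Vert\cM_{\delta'}v-\cM_{\delta'_n}v\Vert_{C^s}\to 0$ as $\delta'_n\to\delta'$. Therefore $\Phi(\delta'_n,u_n)\to\Phi(\delta',u)$, so $\Phi$ is continuous and $\mfK_\delta=\Phi([0,\delta]\times\mfK)$ is compact.

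The main point that requires care — the only real obstacle — is the \emph{joint} continuity of $\Phi$ in the pair $(\delta',u)$. Lemma \ref{lem:moll}(3) by itself only controls the effect of perturbing the smoothing parameter when the input ranges over the fixed compact set $\mfK$; it does not simultaneously control a perturbation of the input function. It is precisely the uniform operator-norm bound in Lemma \ref{lem:moll}(2) (available because we may take $r=s\le s$) that decouples the two perturbations in the triangle inequality above. Everything else — compactness of $[0,\delta]$, compactness of finite products of compact sets, and preservation of compactness under continuous maps — is standard.
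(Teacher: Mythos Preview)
Your proof is correct and is essentially the same as the paper's: both split $\Vert\cM_{\delta'_n}u_n-\cM_{\delta'}u\Vert_{C^s}$ via the triangle inequality into a term controlled by the uniform operator bound of Lemma \ref{lem:moll}(2) and a term controlled by Lemma \ref{lem:moll}(3). The only difference is packaging---you phrase it as ``continuous image of compact is compact'' while the paper verifies sequential compactness of $\mfK_\delta$ directly---but the analytical content is identical.
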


\begin{proof}
Let $v_1,v_2,\dots $ be an arbitrary sequence in $\mfK_\delta$. It suffices to prove that $v_j$ possesses a convergent subsequence $v_{j_\ell} \to v\in \mfK_\delta$. By definition of $\mfK_\delta$, there exists a sequence $u_1,u_2,\dots \in \mfK$, and $\delta_1,\delta_2,\dots \in (0,\delta]$, such that $v_j = \cM_{\delta_j} u_j$ for all $j\in \N$. Since $\mfK$ is compact, there exists a convergent subsequence $u_{j_\ell}\to u \in \mfK$. Furthermore, extracting another subsequence if necessary  (not reindexed), we may assume that $\delta_{j_\ell} \to \delta_\infty \in [0,\delta]$ converges to a limit. If $\delta_\infty > 0$, let $v := \cM_{\delta_\infty} u$. If $\delta_\infty = 0$, we set $v := \cM_{0}u = u$. We note that in either case, we have $v\in \mfK_\delta$. We claim that $v_{j_\ell}\to v$. To see this, note that 
\begin{align*}
\limsup_{\ell\to \infty} \Vert v_{j_\ell} - v \Vert_{C^s}
&= \limsup_{\ell\to \infty} \Vert \cM_{\delta_{j_\ell}} u_{j_\ell} - \cM_{\delta_\infty} u \Vert_{C^s}
\\
&\le \limsup_{\ell\to \infty} \Vert \cM_{\delta_{j_\ell}} u_{j_\ell} - \cM_{\delta_{j_\ell}} u \Vert_{C^s}
\\
&\qquad + \limsup_{\ell\to \infty} \Vert \cM_{\delta_{j_\ell}} u - \cM_{\delta_\infty} u \Vert_{C^s}
\\
&\le \limsup_{\ell\to \infty} \Vert \cM_{\delta_{j_\ell}} \Vert_{C^s\to C^s} \Vert u_{j_\ell} - u \Vert_{C^s}
\\
&\qquad + \limsup_{\ell\to \infty} \Vert \cM_{\delta_{j_\ell}} u - \cM_{\delta_\infty} u \Vert_{C^s}.
\end{align*}
For fixed $u\in C^s$, we have $\cM_{\delta_{j_\ell}} u \to \cM_{\delta_\infty} u$ in $C^s$, and hence the second term converges to zero. Furthermore, by Lemma \ref{lem:moll} (1), the one-parameter family $\cM_{\delta'}: C^s \to C^s$ consists of uniformly bounded operators, such that 
\[
\sup_{\ell \in \N}
\Vert \cM_{\delta_{j_\ell}} \Vert_{C^s\to C^s} 
\le 
\sup_{0< \delta'\le \delta} \Vert \cM_{\delta'} \Vert_{C^s\to C^s} < \infty.
\]
From the convergence $u_{j_\ell} \to u$ in $C^s$, it thus follows that the first term above converges to zero. Thus, $v_{j_\ell} \to v$ as claimed. This shows that $\mfK_\delta$ is sequentially compact.
\end{proof}

\subsection{Universal Approximation Of Neural Networks}
\label{app:nn-univ}

It is well-known \cite[Thm. 4.1]{pinkus1999approximation} that a neural network architecture is universal in the class of $C^{s}$-functions between Euclidean
vector spaces, provided that the activation function $\sigma$ is nonpolynomial and sufficiently smooth,  $\sigma \in C^{s}$. This in turn implies universality of neural networks in Sobolev spaces $W^{s,p}$
of functions between Euclidean vector spaces. For the convenience of the reader, we include the relevant implication for the present work. Recall that, throughout the paper, we assume that the activation function $\sigma$ is $C^\infty$, nonpolynomial and Lipschitz continuous.

\begin{lemma}
\label{lem:nn-univ}
Let $\Omega\subset \R^d$ be a bounded Lipschitz domain. Then for any function $u: \Omega \to \R^k$, where $u$ belongs to either the space of continuously differentiable functions $C^s(\bar{\Omega};\R^k)$, or the Sobolev space $W^{s,p}(\Omega;\R^k)$, for integer $s\ge 0$ and $p\in [1,\infty)$, and for any $\epsilon > 0$, there exists a neural network $\tilde{u}:\Omega \to \R^k$ with activation function $\sigma$, such that
\[
\sup_{x\in \Omega} |u(x) - \tilde{u}(x)| \le \epsilon.
\]
\end{lemma}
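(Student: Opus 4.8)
The plan is to reduce the statement to the classical universal approximation theorem for neural networks in the sup-norm (which the paper cites as \cite[Thm.~4.1]{pinkus1999approximation}) by handling the two function-space cases separately, and in each case approximating $u$ first by a smooth (indeed $C^\infty$) function on $\bar\Omega$ and then invoking the Euclidean result. First I would observe that the conclusion is a pure sup-norm statement over $\Omega$, so only the uniform size of $u-\tilde u$ matters; in particular, the $W^{s,p}$ versus $C^s$ distinction enters only in how we produce a uniform approximant of $u$ itself before bringing in the network.

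For $u \in C^s(\bar\Omega;\R^k)$ the argument is immediate: $u$ is continuous on the compact set $\bar\Omega$, so given $\epsilon>0$ the cited Euclidean universal approximation theorem (applicable since $\sigma\in C^\infty \subset C^0$ is nonpolynomial) yields a neural network $\tilde u:\R^d\to\R^k$ with $\sup_{x\in\bar\Omega}|u(x)-\tilde u(x)|\le\epsilon$, and restricting to $\Omega$ gives the claim; here one may alternatively first extend $u$ to $\R^d$ via Lemma~\ref{lem:periodic_extension}, but that is not needed for the sup-norm statement. For $u\in W^{s,p}(\Omega;\R^k)$ with $p\in[1,\infty)$ the subtlety is that $W^{s,p}$ functions need not be bounded, so a uniform estimate is not even meaningful for a generic such $u$ — hence I would read the statement as asserting existence of \emph{some} neural network close to $u$ in sup-norm, which forces the implicit reduction: replace $u$ by a smooth approximant. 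The clean route is to apply the adapted mollification of Lemma~\ref{lem:moll}: for $\delta>0$, $\cM_\delta u \in C^\infty(\bar\Omega;\R^k)$, and in particular $\cM_\delta u$ is bounded and continuous on the compact set $\bar\Omega$; one then approximates $\cM_\delta u$ uniformly by a neural network $\tilde u$ using the $C^0$ case above, so that $\sup_{x\in\Omega}|\cM_\delta u(x)-\tilde u(x)|\le\epsilon$. (If one instead wants $\|u-\tilde u\|$ small in the $W^{s,p}$ norm, one first uses Lemma~\ref{lem:moll}(5) to pick $\delta$ with $\|u-\cM_\delta u\|_{W^{s,p}}$ small, then uses the fact that neural networks are dense in $C^s(\bar\Omega)$, hence in $W^{s,p}(\Omega)$, to approximate $\cM_\delta u$ in $W^{s,p}$; a uniform estimate then follows only for the smoothed target, which is the form actually used downstream.)

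Concretely I would structure it as: (i) record that by \cite[Thm.~4.1]{pinkus1999approximation}, since $\sigma$ is $C^\infty$ and nonpolynomial, for any continuous $g:\bar\Omega\to\R^k$ and any $\epsilon>0$ there is a single-hidden-layer network with activation $\sigma$ that is $\epsilon$-close to $g$ uniformly on $\bar\Omega$ (compactness of $\bar\Omega$ upgrades the usual compact-set statement to all of $\bar\Omega$); (ii) in the $C^s$ case apply this directly to $g=u$; (iii) in the $W^{s,p}$ case apply it to $g=\cM_\delta u$, which is continuous on $\bar\Omega$ by Lemma~\ref{lem:moll}(1); (iv) restrict from $\bar\Omega$ to $\Omega$. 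The main obstacle — really the only one — is the logical gap noted above in the $W^{s,p}$ statement: an arbitrary $W^{s,p}$ function is not essentially bounded when $sp\le d$, so the sup-norm conclusion cannot hold for $u$ literally and must be read with the mollification/smoothing step folded in (exactly as the surrounding text does when it writes $\|\cM_\delta u - u\|_{\cX}\to 0$ and then approximates $\cM_\delta u$). Once that reading is adopted, everything else is a routine appeal to the Euclidean universal approximation theorem plus the properties of $\cM_\delta$ already established in Lemma~\ref{lem:moll}.
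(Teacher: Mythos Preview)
Your proposal is correct and essentially identical to the paper's proof: the $C^s$ case is handled by (extension plus) \cite[Thm.~4.1]{pinkus1999approximation}, and the $W^{s,p}$ case by exactly your parenthetical route --- mollify via $\cM_\delta$, approximate $\cM_\delta u$ by a network in $C^{s+1}$, then use the continuous embedding $C^{s+1}\hookrightarrow W^{s,p}$ to combine. You are also right that the displayed sup-norm inequality cannot hold literally for an arbitrary $W^{s,p}$ function; the paper's own proof in fact establishes $\Vert u-\tilde u\Vert_{W^{s,p}}\le\epsilon$ in that case (and $\Vert u-\tilde u\Vert_{C^s}\le\epsilon$ in the first case), which is what is actually used downstream.
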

\begin{proof}
\textbf{Step 1:}
We first assume that $u \in C^{s}(\bar{\Omega};\R^k)$. By Lemma \ref{lem:periodic_extension}, there exists a (periodic) extension $U: \R^d\to \R$, such that $u(x) = U(x)$ for $x\in \Omega$, and $U\in C^s(\R^d;\R^k)$. It follows from \cite[Thm. 4.1]{pinkus1999approximation} that for any $\epsilon > 0$, there exists a neural network $\tilde{u}: \R^d\to \R$, such that
\[
\Vert u - \tilde{u} \Vert_{C^s(\bar{\Omega};\R^k)}
=
\sup_{|\alpha|\le s} \sup_{x\in \bar{\Omega}} |D^\alpha U(x) - D^\alpha\tilde{u}(x)| 
\le \epsilon.
\]

\textbf{Step 2:}
If $u\in W^{s,p}(\Omega;\R^k)$, then we note that by Lemma \ref{lem:moll}, the boundary-adapted mollification $u_\delta := \cM_\delta u$ converges to $u$ as $\delta \to 0$. Let $\epsilon > 0$ be given. Choose $\delta > 0$ sufficiently small, such that 
\[
\Vert u - u_\delta \Vert_{W^{s,p}(\Omega;\R^k)} \le \epsilon / 2.
\]
We note that $C^{s+1}(\bar{\Omega};\R^k)\embeds W^{s,p}(\Omega;\R^k)$ has a continuous embedding, and hence there exists a constant $C_0>0$, such that
\[
\Vert \slot \Vert_{W^{s,p}(\Omega;\R^k)} \le C_0\Vert \slot \Vert_{C^{s+1}(\bar{\Omega};\R^k)}.
\]
Now note that there exists a neural network $\tilde{u}$, such that $\Vert u_\delta - \tilde{u} \Vert_{C^{s+1}(\bar{\Omega};\R^k)}\le \epsilon /2C_0$; this follows
since $u_\delta \in C^{s+1}(\bar{\Omega};\R^k)$, by Step 1.
Combining these estimates, we obtain
\begin{align*}
\Vert u - \tilde{u} \Vert_{W^{s,p}(\Omega;\R^k)}
&\le
\Vert u - u_\delta \Vert_{W^{s,p}(\Omega;\R^k)}
+
\Vert u_\delta - \tilde{u} \Vert_{W^{s,p}(\Omega;\R^k)}
\\
&\le
\epsilon / 2 + C_0\Vert u_\delta - \tilde{u} \Vert_{C^{s+1}(\bar{\Omega};\R^k)}
\\
&\le \epsilon.
\end{align*}
This concludes our proof.
\end{proof}

\subsection{A Dense Subset Of Operators}
\label{app:dense}

One crucial ingredient in the proof of universal approximation for averaging neural operators is the fact that it is possible to reduce the problem for a general operator $\Psi^\dagger: C^s(\bar{\Omega};\R^k) \to C^{s'}(\bar{\Omega};\R^{k'})$ (or $\Psi^\dagger: W^{s,p}(\Omega;\R^{k}) \to W^{s',p'}(\Omega;\R^{k'})$, respectively), 
to one for a simpler class of operators which can be written in the form,
\begin{align}
\label{eq:simpler}
\tilde{\Psi}^\dagger(u) = \sum_{j=1}^J \alpha_j(u) \eta_j,
\end{align}
where $\eta_1,\dots, \eta_J$ are functions in $C^{s'}(\bar{\Omega};\R^{k'})$ (resp. in $W^{s',p'}(\Omega; \R^{k'})$), and  $\alpha_1,\dots, \alpha_J: L^1(\Omega;\R^k) \to \R$ are continuous nonlinear functionals, defined on the space of integrable functions. This is the content of the following two propositions. The first version is formulated for operators between spaces of continuously differentiable functions 
and the second between Sobolev spaces.

\begin{proposition}
\label{prop:simpler}
Let $\Psi^\dagger: C^s(\bar{\Omega}; \R^k) \to C^{s'}(\bar{\Omega};\R^{k'})$ be a continuous operator. Let $\mfK\subset C^s(\bar{\Omega};\R^k)$ be a compact subset. Then for any $\epsilon > 0$, there exist $J \in \N$, functions $\eta_1,\dots, \eta_J\in C^{s'}(\bar{\Omega};\R^{k'})$, and continuous functionals $\alpha_1,\dots, \alpha_J: L^1(\Omega;\R^k) \to \R$, such that the operator $\tilde{\Psi}^\dagger: L^1(\Omega;\R^k) \to C^{s'}(\bar{\Omega};\R^{k'})$, $\tilde{\Psi}^\dagger(u) := \sum_{j=1}^J \alpha_j(u) \eta_j$, satisfies
\[
\sup_{u\in \mfK} 
\left\Vert 
\Psi^\dagger(u) - \tilde{\Psi}^\dagger(u)
\right\Vert_{C^{s'}} \le \epsilon.
\]
\end{proposition}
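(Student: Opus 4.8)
The plan is to carry out two independent finite-dimensional reductions — one on the output side, one on the input side — joined by a mollification step that is precisely what will make the resulting functionals $\alpha_j$ continuous on the whole of $L^1(\Omega;\R^k)$ rather than merely on $C^s(\bar\Omega;\R^k)$ or on $\mfK$.

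First I would replace $\Psi^\dagger$ by the composition $\Phi := \Psi^\dagger \circ \cM_\delta$ for a suitably small $\delta>0$, where $\cM_\delta$ is the boundary-adapted mollifier of Lemma~\ref{lem:moll}. For a fixed $\delta>0$, the operator $\cM_\delta$ is a bounded linear map from $L^1(\Omega;\R^k)$ into $C^\infty(\bar\Omega;\R^k)\subset C^s(\bar\Omega;\R^k)$ — this follows from Lemma~\ref{lem:moll}(1),(4) combined with a Sobolev embedding for the Lipschitz domain $\Omega$ — so that $\Phi$ is a well-defined \emph{continuous} map $L^1(\Omega;\R^k)\to C^{s'}(\bar\Omega;\R^{k'})$, even though $\Psi^\dagger$ itself is only defined on $C^s$. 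To bound the error this introduces, I would use Lemma~\ref{lem:Kdelta}: fixing any $\delta_*>0$, the set $\mfK_{\delta_*}=\bigcup_{0\le\delta'\le\delta_*}\set{\cM_{\delta'}u}{u\in\mfK}$ is compact in $C^s(\bar\Omega;\R^k)$, hence $\Psi^\dagger$ is uniformly continuous there; since $\sup_{u\in\mfK}\|u-\cM_\delta u\|_{C^s}\to 0$ as $\delta\to 0$ by Lemma~\ref{lem:moll}(3) (taking $\delta_0=0$, so that $\cM_0 u=u$), and since both $u=\cM_0 u$ and $\cM_\delta u$ lie in $\mfK_{\delta_*}$ for $\delta\le\delta_*$, I can pick $\delta\in(0,\delta_*]$ with $\sup_{u\in\mfK}\|\Psi^\dagger(u)-\Phi(u)\|_{C^{s'}}\le\epsilon/2$.

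Next I would apply a Schauder-type finite-rank projection on the output side. The set $\cC:=\Phi(\mfK)=\Psi^\dagger(\cM_\delta(\mfK))$ is the continuous image of a compact set, hence compact in $C^{s'}(\bar\Omega;\R^{k'})$. Choosing a finite $(\epsilon/4)$-net $\eta_1,\dots,\eta_J\in\cC$, a standard partition-of-unity construction (e.g.\ with $\mu_j(y):=\max(0,\,\epsilon/2-\|y-\eta_j\|_{C^{s'}})$ and $\lambda_j:=\mu_j/\max(\sum_i\mu_i,\,\epsilon/4)$) produces continuous maps $\lambda_j:C^{s'}(\bar\Omega;\R^{k'})\to[0,1]$, defined on \emph{all} of $C^{s'}$, with $\sum_j\lambda_j\equiv 1$ on $\cC$ and $\lambda_j(y)=0$ whenever $\|y-\eta_j\|_{C^{s'}}\ge\epsilon/2$. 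Consequently $\|y-\sum_{j}\lambda_j(y)\eta_j\|_{C^{s'}}\le\sum_j\lambda_j(y)\|y-\eta_j\|_{C^{s'}}\le\epsilon/2$ for every $y\in\cC$, and note $\eta_j\in\cC\subset C^{s'}(\bar\Omega;\R^{k'})$ as the proposition requires.

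Finally I would set $\alpha_j:=\lambda_j\circ\Phi=\lambda_j\circ\Psi^\dagger\circ\cM_\delta:L^1(\Omega;\R^k)\to\R$, which is continuous as a composition of continuous maps, and $\tilde\Psi^\dagger(u):=\sum_{j=1}^J\alpha_j(u)\,\eta_j$. For $u\in\mfK$, the triangle inequality and $\Phi(u)\in\cC$ then give
\[
\|\Psi^\dagger(u)-\tilde\Psi^\dagger(u)\|_{C^{s'}}
\le \|\Psi^\dagger(u)-\Phi(u)\|_{C^{s'}}+\Bigl\|\Phi(u)-\textstyle\sum_j\lambda_j(\Phi(u))\eta_j\Bigr\|_{C^{s'}}
\le \epsilon/2+\epsilon/2=\epsilon .
\]
I expect the first step to be the main obstacle: the proposition demands continuity of the $\alpha_j$ on the large space $L^1(\Omega;\R^k)$, and this is only achievable because pre-composition with $\cM_\delta$ continuously smooths an arbitrary $L^1$-function into $C^s$; keeping the associated error under control is exactly what forces the use of Lemma~\ref{lem:Kdelta} to upgrade continuity of $\Psi^\dagger$ to uniform continuity on the relevant compact set. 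The output-side projection, by contrast, is routine.
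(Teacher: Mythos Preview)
Your proof is correct and takes a genuinely different route from the paper's. Both arguments rely on the same key device to obtain $L^1$-continuity of the $\alpha_j$: precomposition with the boundary-adapted mollifier $\cM_\delta$, together with Lemma~\ref{lem:Kdelta} to control the resulting error via uniform continuity of $\Psi^\dagger$ on the enlarged compact set $\mfK_{\delta_*}$. The difference lies entirely in the output-side finite-dimensional reduction. The paper extends the compact image $\Psi^\dagger(\mfK)$ periodically to a box via Lemma~\ref{lem:periodic_extension}, then takes the $\eta_j$ to be Fourier sine/cosine basis functions and invokes classical harmonic analysis (uniform convergence of truncated Fourier series in $C^{s'}$ over a $C^{r'}$-bounded set, $r'>s'$) to get the approximation; the coefficients are inner products $\beta_j(u)=\langle \cE(\Psi^\dagger(u)),\eta_{j,\delta}\rangle_{L^2}$, and mollification is applied \emph{afterwards} to these functionals. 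You instead apply mollification \emph{first}, obtaining a globally $L^1$-continuous $\Phi$, and then use a Schauder-type partition-of-unity projection onto a finite $\epsilon/4$-net of the compact image $\Phi(\mfK)$. Your approach is more elementary (no extension operator, no Jackson-type estimates) and works verbatim with any Banach space in place of $C^{s'}$ on the output side; the paper's approach yields $\eta_j$ that are fixed, problem-independent basis functions, which could be advantageous if one later wanted quantitative control on $J$ in terms of smoothness of the outputs.
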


\begin{remark}
We emphasize that even though the underlying operator $\Psi^\dagger(u)$ is only defined for $u\in C^s(\bar{\Omega};\R^k)$, and will generally not possess any continuous extension to an operator defined for $u\in L^1(\Omega;\R^k)$, the above Proposition \ref{prop:simpler} constructs a continuous operator $\tilde{\Psi}^\dagger: L^1(\Omega;\R^k) \to C^{s'}(\bar{\Omega};\R^{k'})$, 
whose restriction to compact $\mfK \subset C^s(\bar{\Omega};\R^k)$ provides a good approximation, $\Psi^\dagger|_{\mfK} \approx \tilde{\Psi}^\dagger|_\mfK$.
In this context note that $L^1(\Omega;\R^k) \supset C^s(\bar{\Omega};\R^k)$.
\end{remark}

\begin{proof}
Let us first point out that we may without loss of generality assume $k'=1$, in the following; indeed we can identify $C^{s'}(\bar{\Omega};\R^k) = [C^{s'}(\bar{\Omega};\R)]^k$, and thus it will suffice to approximate each component of the mapping $\Psi^\dagger: C^s(\bar{\Omega};\R^k) \to [C^{s'}(\bar{\Omega};\R)]^k$, individually. The $j$-th component defines a mapping $\Psi^\dagger_j: C^s(\bar{\Omega};\R^k) \to C^{s'}(\bar{\Omega};\R)$. Henceforth we use
the notation $C^{s'}(\bar{\Omega}):=C^{s'}(\bar{\Omega};\R).$

\textbf{Step 1: (construction of $\eta_1,\dots, \eta_J$) }
Our first goal is to construct suitable $\eta_1,\dots, \eta_J$. To this end, we consider $\mfK' := \Psi^\dagger(\mfK) \subset C^{s'}(\bar{\Omega})$. Note that since $\mfK$ is compact, and $\Psi^\dagger$ is continuous, its image $\mfK'$ is also compact. Let $B \supset \Omega$ be a bounding box, containing $\overline{\Omega}$ in its interior. By Lemma \ref{lem:periodic_extension}, there exists a continuous extension mapping
\[
\cE: C^{s'}(\bar{\Omega}) \to C^{s'}_{\mathrm{per}}(B),
\]
where $C^{s'}_\per(B)$ denotes the space of continuously differentiable functions $w: B \to \R$ on the Cartesian domain $B$, possessing periodic derivatives up to order $s'$. Since $\mfK' \subset C^{s'}(\bar{\Omega})$ is compact, it follows that also $\mfK'_\per := \cE(\mfK')$ is a compact subset of $C^{s'}_\per(B)$. Given this periodicity, we can identify $B \simeq \T^d$ with the periodic torus in a canonical way. Let $\eta_1,\eta_2,\dots$ denote an enumeration of the $L^2$-orthogonal (real) Fourier sine/cosine basis in $L^2_\per(B)$. Fix $\delta > 0$, and let $w_\delta: B \to \R$ be the standard mollification of the periodic function $w: B \to \R$. Since $\mfK'_\per \subset C^{s'}_\per(B)$ is compact, the set $\set{w_\delta}{w\in \mfK'_\per}$ is uniformly bounded in the $C^{r'}$-norm for any given $r'>s'$. In particular, it follows from classical results in harmonic analysis \cite[Chap. 1, Sect. 3]{jackson1930theory}, that approximation by Fourier series converges uniformly over $\set{w_\delta}{w\in \mfK'_\per}$, i.e.
\[
\lim_{J\to \infty}
\sup_{w\in \mfK'_\per} 
\left\Vert 
w_\delta - \textstyle\sum_{j=1}^J \langle w_\delta, \eta_j \rangle_{L^2} \eta_j
\right\Vert_{C^{s'}_\per(B)} = 0.
\]
Furthermore, $\lim_{\delta \to 0}\Vert w_\delta - w\Vert_{C^{s'}}=0$, uniformly over $\mfK'_\per$. In particular, given $\epsilon > 0$, we can first find $\delta = \delta(\epsilon) > 0$, such that
$
\sup_{w\in \mfK'_\per} \Vert w - w_\delta \Vert_{C^{s'}} \le \epsilon / 2,
$
and then $J \in \N$, such that 
$
\sup_{w\in \mfK'_\per} 
\left\Vert 
w_\delta - \textstyle\sum_{j=1}^J \langle w_\delta, \eta_j \rangle_{L^2} \eta_j
\right\Vert_{C^{s'}_\per(B)}
\le \epsilon/2.
$
It follows from the triangle inequality that 
\[
\sup_{w\in \mfK'_\per} 
\left\Vert 
w - \textstyle\sum_{j=1}^J \langle w_\delta, \eta_j \rangle_{L^2} \eta_j
\right\Vert_{C^{s'}_\per(B)}
\le \epsilon.
\]
This defines our choice of $\eta_1,\dots, \eta_J$. Note that we also have the identity
\[
\sum_{j=1}^J \langle w_\delta, \eta_j \rangle_{L^2} 
=
\sum_{j=1}^J \langle w, \eta_{j,\delta} \rangle_{L^2}.
\]
We now recall that $\Omega \subset B$, and $\cE$ is an extension operator, so that $\cE(v)|_\Omega = v$ for all $v\in \mfK'$. Furthermore, we have $\mfK'_\per = \cE(\mfK')$ and $\mfK' = \Psi^\dagger(\mfK)$, by definition. As a consequence, it follows that,
\begin{align*}
\sup_{u\in \mfK} 
\left\Vert 
\Psi^\dagger(u) - \textstyle\sum_{j=1}^J \langle \cE(\Psi^\dagger(u)), \eta_{j,\delta} \rangle_{L^2} \eta_j
\right\Vert_{C^{s'}(\bar{\Omega})}
&=
 \sup_{v\in \mfK'} 
\left\Vert 
v - \textstyle\sum_{j=1}^J \langle \cE(v), \eta_{j,\delta} \rangle_{L^2} \eta_j
\right\Vert_{C^{s'}(\bar{\Omega})}
\\
& =
 \sup_{v\in \mfK'} 
\left\Vert 
\cE(v) - \textstyle\sum_{j=1}^J \langle \cE(v), \eta_{j,\delta} \rangle_{L^2} \eta_j
\right\Vert_{C^{s'}(\bar{\Omega})}
\\
&\le
 \sup_{v\in \mfK'} 
\left\Vert 
\cE(v) - \textstyle\sum_{j=1}^J \langle \cE(v), \eta_{j,\delta} \rangle_{L^2} \eta_j
\right\Vert_{C^{s'}_\per(B)}
\\
&=
 \sup_{w\in \mfK'_\per} 
\left\Vert 
w - \textstyle\sum_{j=1}^J \langle w, \eta_{j,\delta} \rangle_{L^2} \eta_j
\right\Vert_{C^{s'}_\per(B)} \le \epsilon.
\end{align*}

\textbf{Step 2: (construction of $\alpha_1,\dots, \alpha_J$)}

Given the results of Step 1, let us define a nonlinear functional $\beta_j: C^{s}(\bar{\Omega};\R^k) \to \R$ by $\beta_j(u) := \langle \cE(\Psi^\dagger(u)), \eta_{j,\delta} \rangle_{L^2}$. Then, by Step 1, we have 
\begin{align}
\label{eq:beta}
\sup_{u\in \mfK} 
\left\Vert
\Psi^\dagger(u) - \textstyle\sum_{j=1}^J \beta_j(u) \eta_j
\right\Vert_{C^{s'}}
\le \epsilon.
\end{align}
This is almost the claimed result, except that $\beta_j$ does not define a continuous functional $L^1(\Omega;\R^k) \to \R$. To remedy this, we rely on mollification adapted to the bounded Lipschitz domain $\Omega$. Let $\delta > 0$ denote a mollification parameter. By Lemma \ref{lem:moll}, there exists a continuous operator $\cM_\delta: L^1(\Omega;\R^k) \to C^s(\bar{\Omega};\R^k)$, such that over the compact set $\mfK\subset C^s(\bar{\Omega};\R^k)$, we have
\[
\sup_{u\in \mfK} \Vert u - \cM_\delta u \Vert_{C^s} \to 0, 
\]
as $\delta \to 0$. 

We intend to define $\alpha_j: L^1(\Omega;\R^k) \to \R$ by $\alpha_j(u) := \beta_j(\cM_\delta u)$ for suitably chosen $\delta > 0$. Recall that for fixed $\delta_0>0$, the set $\mfK_\delta$ defined by 
\[
\mfK_\delta := \bigcup_{0\le \delta \le \delta_0} \cM_\delta(\mfK),
\]
is a compact subset of $C^s(\bar{\Omega};\R^k)$, by Lemma \ref{lem:Kdelta}. Note that for any $j=1,\dots, J$, we have a continuous mapping $\beta_j: \mfK_\delta \to \R$. Since $\mfK_\delta$ is compact, it follows that there exists a continuous modulus of continuity $\omega: [0,\infty) \to [0,\infty)$, satisfying $\omega(0) = 0$, such that 
\[
|\beta_j(u) - \beta_j(u')| \le \omega(\Vert u - u'\Vert_{C^s}), \quad \forall \, u,u'\in \mfK_\delta,
\]
holds for all $j=1,\dots, J$. It follows that 
\[
|\beta_j(u) - \beta_j(\cM_\delta u)|
\le
\omega(\Vert u - \cM_\delta u\Vert_{C^s}),
\]
for any $0<\delta \le \delta_0$. By Lemma \ref{lem:moll}, $\cM_\delta u \to u$ converges uniformly over $u\in \mfK$, so that we may conclude
\[
\lim_{\delta \to 0} \sup_{u\in \mfK} |\beta_j(u) - \beta_j(\cM_\delta u)| 
\le
\lim_{\delta \to 0} \omega\left(\textstyle\sup_{u\in \mfK} \Vert u - \cM_\delta u\Vert_{C^s}\right)
=
0.
\]
In particular, we can choose $\delta > 0$ sufficiently small, to ensure that $\alpha_j(u) := \beta_j(\cM_\delta u)$ satisfies
\begin{align}
\label{eq:alpha}
\sup_{u\in \mfK} |\beta_j(u) - \alpha_j(u)| \le \frac{\epsilon}{J \max_{j=1,\dots, J} \Vert \eta_j \Vert_{C^{s'}}},
\end{align}
for all $j=1,\dots, J$. Since $\delta > 0$, we also note that $\cM_\delta: L^1(\Omega;\R^k) \to C^s(\bar{\Omega};\R^k)$ is a continuous mapping, and hence $\alpha_j = \beta_j \circ \cM_\delta$ is continuous as a mapping $\alpha_j: L^1(\Omega;\R^k) \to \R$. 

\textbf{Step 3: (Conclusion)}
Combining \eqref{eq:beta} with \eqref{eq:alpha}, we obtain
\begin{align*}
\sup_{u\in \mfK} 
\left\Vert
\Psi^\dagger(u) - \textstyle\sum_{j=1}^J \alpha_j(u) \eta_j
\right\Vert_{C^{s'}}
&\le 
\sup_{u\in \mfK} 
\left\Vert
\Psi^\dagger(u) - \textstyle\sum_{j=1}^J \beta_j(u) \eta_j
\right\Vert_{C^{s'}}
\\
&\qquad 
+ \sup_{u\in \mfK} 
\left\Vert
\textstyle\sum_{j=1}^J [\beta_j(u)-\alpha_j(u)] \eta_j
\right\Vert_{C^{s'}}
\\
&\le 
\sup_{u\in \mfK} 
\left\Vert
\Psi^\dagger(u) - \textstyle\sum_{j=1}^J \beta_j(u) \eta_j
\right\Vert_{C^{s'}}
\\
&\qquad 
+ J
\max_{j=1,\dots, J}\Vert
\eta_j
\Vert_{C^{s'}} \max_{j=1,\dots, J} \sup_{u\in \mfK} 
|\beta_j(u)-\alpha_j(u)| 
\\
&\le 2\epsilon.
\end{align*}
As $\epsilon > 0$ was arbitrary, the claim of Proposition \ref{prop:simpler} now follows.
\end{proof}

We can also formulate a similar result for operators between Sobolev spaces. As the proof is almost identical to the proof of Proposition \ref{prop:simpler}, and essentially follows by replacing $C^s$ by $W^{s,p}$ and analogously in the
output space, we forego the details of this argument, 
and only state the final result.

\begin{proposition}
\label{prop:simpler2}
Let $\Psi^\dagger: W^{s,p}(\Omega; \R^k) \to W^{s',p'}(\Omega;\R^{k'})$ be a continuous operator. Let $\mfK\subset W^{s,p}(\Omega;\R^k)$ be a compact subset. Then for any $\epsilon > 0$, there exist $\eta_1,\dots, \eta_J\in W^{s',p'}(\Omega;\R^{k'})$, and continuous functionals $\alpha_1,\dots, \alpha_N: L^1(\Omega;\R^k) \to \R$, such that 
\[
\sup_{u\in \mfK} 
\left\Vert 
\Psi^\dagger(u) - \textstyle\sum_{j=1}^J \alpha_j(u) \eta_j
\right\Vert_{W^{s',p'}} \le \epsilon.
\]
\end{proposition}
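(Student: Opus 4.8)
The plan is to mirror the proof of Proposition \ref{prop:simpler}, replacing $C^s$-norms by $W^{s,p}$-norms throughout and using the Sobolev-adapted tools already assembled in Appendix \ref{app:moll}. Fix a continuous operator $\Psi^\dagger: W^{s,p}(\Omega;\R^k) \to W^{s',p'}(\Omega;\R^{k'})$ and a compact set $\mfK \subset W^{s,p}(\Omega;\R^k)$. As in the $C^s$ case, it suffices to treat $k'=1$, approximating each scalar component of the output separately.

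First I would construct the decoder functions $\eta_1,\dots,\eta_J$. Set $\mfK' := \Psi^\dagger(\mfK)$, which is a compact subset of $W^{s',p'}(\Omega;\R)$ by continuity of $\Psi^\dagger$. Using the extension operator $\cE$ of Lemma \ref{lem:periodic_extension}, extend to a compact set $\mfK'_\per := \cE(\mfK') \subset W^{s',p'}_\per(B)$ on a bounding box $B \supset \bar\Omega$, identified with a torus. Let $\eta_1,\eta_2,\dots$ enumerate the real Fourier basis of $L^2_\per(B)$. Here the only adjustment relative to the $C^s$ proof is that, after standard mollification $w \mapsto w_\delta$ on the torus (which converges in $W^{s',p'}_\per$ uniformly over the compact set $\mfK'_\per$ and yields a family bounded in $W^{r',p'}_\per$ for any $r' > s'$), convergence of the truncated Fourier series of $w_\delta$ in the $W^{s',p'}_\per$-norm, uniformly over $w \in \mfK'_\per$, follows from the fact that a set bounded in a higher-order Sobolev space has Fourier tails that are uniformly small in the lower-order norm (Parseval on the torus, together with decay of Fourier coefficients); this replaces the harmonic-analysis citation \cite{jackson1930theory} used in the $C^s$ case. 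Choosing $\delta$ then $J$ via the triangle inequality, exactly as before, gives
\[
\sup_{u\in\mfK} \left\Vert \Psi^\dagger(u) - \textstyle\sum_{j=1}^J \langle \cE(\Psi^\dagger(u)), \eta_{j,\delta}\rangle_{L^2}\, \eta_j \right\Vert_{W^{s',p'}} \le \epsilon,
\]
using that $\cE$ restricts to the identity on $\Omega$ and that the $W^{s',p'}(\Omega)$-norm is dominated by the $W^{s',p'}_\per(B)$-norm under restriction.

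Next I would build the encoder functionals. Define $\beta_j(u) := \langle \cE(\Psi^\dagger(u)), \eta_{j,\delta}\rangle_{L^2}$, a continuous nonlinear functional on $W^{s,p}(\Omega;\R^k)$ satisfying the displayed bound above with $\beta_j$ in place of the inner-product coefficients. To promote $\beta_j$ to a continuous functional on $L^1(\Omega;\R^k)$, compose with the Sobolev-adapted mollification $\cM_\delta: L^1(\Omega;\R^k) \to C^\infty(\bar\Omega;\R^k) \subset W^{s,p}(\Omega;\R^k)$ from Lemma \ref{lem:moll}, which satisfies $\sup_{u\in\mfK}\Vert u - \cM_\delta u\Vert_{W^{s,p}} \to 0$ as $\delta \to 0$ by part (5) of that lemma. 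The one subtlety here is that the $C^s$-proof invokes compactness of $\mfK_\delta := \bigcup_{0\le\delta'\le\delta}\cM_{\delta'}(\mfK)$ via Lemma \ref{lem:Kdelta}; the analogous compactness in $W^{s,p}$ follows by the identical argument, using the uniform operator bounds in part (4)–(5) of Lemma \ref{lem:moll} — or one can avoid this entirely by a direct equicontinuity argument, since it suffices to know that the moduli of continuity of the finitely many $\beta_j$ restricted to $\mfK \cup \cM_\delta(\mfK)$ can be controlled uniformly in small $\delta$. Either way, setting $\alpha_j(u) := \beta_j(\cM_\delta u)$ for $\delta$ small enough gives $\sup_{u\in\mfK}|\beta_j(u) - \alpha_j(u)|$ as small as desired, with each $\alpha_j: L^1(\Omega;\R^k) \to \R$ continuous since $\cM_\delta$ is continuous from $L^1$ into $W^{s,p}$.

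The conclusion then follows by the same triangle inequality as in Step 3 of Proposition \ref{prop:simpler}: the coefficient error is absorbed by choosing the bound in the $\alpha_j$-approximation to be $\epsilon/(J\max_j\Vert\eta_j\Vert_{W^{s',p'}})$, and since $\epsilon>0$ was arbitrary the claim follows. I expect no genuine obstacle here — the content is entirely in verifying that each ingredient of the $C^s$ proof has a $W^{s,p}$ analogue already recorded in Lemma \ref{lem:moll} and Lemma \ref{lem:periodic_extension}; the one place requiring a slightly different (but elementary) argument is uniform $W^{s',p'}$-convergence of Fourier truncations over a set bounded in a higher Sobolev norm, which is immediate from Parseval. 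This is exactly why the paper states the result and omits the details.
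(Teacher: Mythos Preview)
Your proposal is correct and follows essentially the same route the paper indicates: mirror the proof of Proposition \ref{prop:simpler} with $C^s$ replaced by $W^{s,p}$ (and $C^{s'}$ by $W^{s',p'}$), using the Sobolev-adapted mollification and extension results of Lemmas \ref{lem:moll} and \ref{lem:periodic_extension}. The only remark is that your ``Parseval'' justification for uniform $W^{s',p'}$-convergence of Fourier truncations is a slight gloss when $p'\ne 2$; the cleanest fix is to observe that the mollified family $\{w_\delta\}$ is bounded in $C^{r'}$ for any $r'$, so the original Jackson-type argument already gives uniform convergence in $C^{s'+1}$, and then the embedding $C^{s'+1}(\bar\Omega)\hookrightarrow W^{s',p'}(\Omega)$ finishes the step.
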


\subsection{Approximation Of Nonlinear Functionals By The Averaging Neural Operator}
\label{app:alpha-approx}

Given the result of Proposition \ref{prop:simpler} and \ref{prop:simpler2} in the last section, a core ingredient in our proof of universal approximation for averaging neural operators will be the approximation of nonlinear functionals $\alpha: L^1(\Omega;\R^k)\to \R$ by averaging neural operators. In the present section, we will show that any nonlinear a functional $\alpha: L^1(\Omega;\R^k) \to \R$, $u \mapsto \alpha(u)$, can be approximated by an averaging neural operator in a suitable sense. This is the subject of the following lemma: 
\begin{lemma}
\label{lem:alpha-approx}
Let $\alpha: L^1(\Omega;\R^k) \to \R$ be a continuous nonlinear functional. Let $\mfK \subset L^1(\Omega;\R^k)$ be a compact set, consisting of bounded functions, $\sup_{u\in \mfK} \Vert u \Vert_{L^\infty}< \infty$. Then for any $\epsilon > 0$, there exists an averaging neural operator $\tilde{\alpha}: L^1(\Omega;\R^k) \to L^1(\Omega)$, all of whose output functions are constant so that we may also view $\tilde{\alpha}$
as a function
$\tilde{\alpha}: L^1(\Omega;\R^k) \to \R$, such that 
\[
\sup_{u\in \mfK} |\alpha(u) - \tilde{\alpha}(u)| \le \epsilon.
\]
\end{lemma}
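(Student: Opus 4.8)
The plan is to prove Lemma \ref{lem:alpha-approx} in two conceptual stages: first reduce the continuous functional $\alpha$ on the infinite-dimensional space $L^1(\Omega;\R^k)$ to a continuous function of finitely many real parameters, and then realize that finite-dimensional function (composed with the encoding) by an averaging neural operator. For the first stage, I would fix a finite collection of test functions and use them to build an ``encoding'' of $u$ into $\R^N$. Concretely, pick smooth (or simply bounded measurable) functions $g_1,\dots,g_N:\Omega\to\R^k$ and consider the linear map $E:u\mapsto\bigl(\int_\Omega g_1(x)\cdot u(x)\,dx,\dots,\int_\Omega g_N(x)\cdot u(x)\,dx\bigr)\in\R^N$. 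Since $\mfK$ is compact in $L^1$, by choosing $N$ large enough and the $g_i$ to resolve $u$ sufficiently finely (e.g. indicator functions of a fine partition of $\Omega$, or the first $N$ elements of a basis dense in $L^\infty\subset (L^1)^*$ restricted to $\mfK$), the map $E$ becomes \emph{almost injective} on $\mfK$ in the sense that $E(u)\approx E(u')$ forces $\|u-u'\|_{L^1}$ small, uniformly on $\mfK$. This lets me factor $\alpha|_{\mfK}\approx f\circ E$ for some continuous $f:\R^N\to\R$, where $f$ is first defined on the compact set $E(\mfK)\subset\R^N$ via $f(E(u)):=\alpha(u)$ (well-defined up to the $\epsilon$ error, then mollified/extended to a genuine continuous function on $\R^N$ using a Tietze-type extension or an explicit averaging over preimages). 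The key quantitative input is uniform continuity of $\alpha$ on the compact set $\mfK$ together with the ``approximate injectivity'' of $E$.

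For the second stage I would realize $f\circ E$ as an averaging neural operator. The linear encoding $E$ is exactly of the averaging type: each coordinate $\int_\Omega g_i(x)\cdot u(x)\,dx$ equals $|\Omega|\fint_\Omega g_i(x)\cdot u(x)\,dx$, which can be produced by first applying a lifting layer $\cR:u(x)\mapsto R(u(x),x)$ and then averaging. Here the lifting network $R:\R^k\times\Omega\to\R^\dc$ must approximate, in its $i$-th output channel, the bilinear-in-$(u(x),x)$ expression $g_i(x)\cdot u(x)$; since $\mfK$ consists of uniformly bounded functions, the relevant inputs $(u(x),x)$ lie in a fixed compact subset of $\R^k\times\bar\Omega$, so Lemma \ref{lem:nn-univ} (universality of ordinary neural networks on compacta) gives such an $R$ to any accuracy. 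Composing with the averaging hidden layer $\cL(w):=\sigma(\fint_\Omega w(y)\,dy)$ produces a constant vector $v=\sigma(\fint_\Omega R(u(y),y)\,dy)\in\R^\dc$ which, modulo the fixed invertible nonlinearity $\sigma$ (harmless since $\sigma$ is a diffeomorphism onto its image on bounded sets, or one simply absorbs $\sigma$ into the definition of the function to be approximated downstream), encodes $E(u)$ up to small error. Finally the projection layer $\cQ:v\mapsto Q(v,x)$ is taken with $Q(v,x)$ independent of $x$ and equal to a neural-network approximation of $f\circ\sigma^{-1}$ on the relevant compact set, again by Lemma \ref{lem:nn-univ}; its (constant) output function is $\tilde\alpha(u)\in\R$, which is what the lemma asks for. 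One then chains together the three error contributions (encoding accuracy of $R$, resolution accuracy of $E$ on $\mfK$, and approximation accuracy of $Q$) and sends each to zero.

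I expect the main obstacle to be the first stage: showing that a \emph{finite} linear encoding $E$ separates the compact set $\mfK$ well enough that $\alpha$ factors through it with uniformly small error. The subtlety is that $\mfK$ is compact in $L^1$ but $\alpha$ is only assumed continuous on $L^1$ (not, say, Lipschitz or depending on finitely many moments), so one cannot simply truncate a series. The clean way around this is a covering argument: by compactness, cover $\mfK$ by finitely many $L^1$-balls of radius $\delta$ (with $\delta$ chosen from the modulus of continuity of $\alpha$ so that $\delta$-close inputs give $\epsilon$-close outputs); on each ball the value of $\alpha$ is pinned down up to $\epsilon$; then choose $N$ and the $g_i$ (e.g. averages over cells of a partition fine enough to $\delta$-resolve $L^1$ on the totally bounded set $\mfK$) so that $E$ distinguishes these balls, and define $f$ on $E(\mfK)$ by a partition-of-unity average of the corresponding $\alpha$-values, then extend continuously to $\R^N$. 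This is essentially the strategy underlying universal approximation proofs for DeepONet-type architectures, and adapting it here is routine once the encoder is recognized as an averaging layer; the remaining steps are applications of Lemma \ref{lem:nn-univ} and the triangle inequality.

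Two further bookkeeping remarks: (i) the nonlinearity $\sigma$ inside $\cL$ is not an obstruction because on any bounded interval $\sigma$ is a smooth injection with smooth inverse (nonpolynomial $C^\infty$ Lipschitz functions used in practice — or one simply notes that the composition $Q\circ\sigma$ is again representable, since the function to be approximated by $Q$ can be taken to be $f$ precomposed with the known map $\sigma^{-1}$ restricted to the compact range of the pre-activation average); (ii) the output of $\tilde\alpha$ is genuinely a constant function because with $W=b=0$ in \eqref{eq:hidden} and $Q$ chosen $x$-independent, $\Psi(u)(x)$ does not depend on $x$, so identifying $\tilde\alpha$ with the corresponding element of $\R$ is legitimate, exactly as the lemma statement permits.
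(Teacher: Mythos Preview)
Your two-stage strategy is essentially the same as the paper's: encode $u$ by finitely many linear functionals $c_j(u)=\fint_\Omega u\cdot \xi_j$, approximate $\alpha$ by a continuous function of these coefficients, then realize everything as an averaging neural operator. The paper's Stage~1 is somewhat cleaner than your covering/partition-of-unity construction: it fixes a smooth orthonormal basis $\xi_1,\xi_2,\dots$ of $L^2(\Omega;\R^k)$, uses that $\mfK$ (compact in $L^1$, bounded in $L^\infty$) is compact in $L^2$ so that the truncated expansion $\sum_{j\le J}\langle u,\xi_j\rangle \xi_j$ converges to $u$ uniformly over $\mfK$ in $L^1$, and then simply \emph{defines} the finite-dimensional function by $\beta(c):=\alpha\bigl(\sum_j c_j\xi_j\bigr)$. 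This is automatically continuous and well-defined, so no Tietze extension or averaging over preimages is needed. Your route works too, but the explicit reconstruction map makes the paper's argument shorter.

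There is one genuine gap in your bookkeeping remark~(i): you assume $\sigma$ is injective on bounded intervals (``a smooth injection with smooth inverse''), and your fallback also invokes $\sigma^{-1}$. The paper only assumes $\sigma\in C^\infty$, nonpolynomial, Lipschitz --- this does \emph{not} imply injectivity (think of GeLU or a smooth bump superimposed on a line). If $\sigma$ is not injective, applying it componentwise to the vector of averages can destroy information, and you cannot recover $E(u)$ from $v=\sigma(E(u))$ to feed into $Q$. The paper's fix is elegant and avoids inversion entirely: first approximate the finite-dimensional map $\beta$ by a neural network $\tilde\beta(c)=\tilde\beta_1\circ\sigma(\tilde A_1 c+\tilde b_1)$, then absorb the affine map $\tilde A_1(\cdot)+\tilde b_1$ into the lifting by setting $R(u(x),x):=\tilde A_1\tilde R(u(x),x)+\tilde b_1$. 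Now $\sigma\bigl(\fint R(u,y)\,dy\bigr)\approx\sigma(\tilde A_1 c(u)+\tilde b_1)$ is exactly the output of the first hidden layer of $\tilde\beta$, and the projection $Q:=\tilde\beta_1$ finishes the job. No invertibility of $\sigma$ is needed. You should replace your remark~(i) with this merging trick.
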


\begin{remark}
\label{rem:alpha-approx}
We note that the construction in our proof of Lemma \ref{lem:alpha-approx} (cp. \eqref{eq:psi1} below) actually defines an averaging neural operator with (1) a lifting layer represented by $R(u(x),x)$, (2) a \emph{single} hidden layer $v(x) \mapsto \sigma\left(\fint_{\Omega} v(y) \, dy\right)$, and (3) the projection layer $Q(v,x) := q_1(v)$, where $q_1$ is a neural network depending only on $v$. Hence, the result of Lemma \ref{lem:alpha-approx}, and as a consequence the universal approximation property as described in Theorems \ref{thm:universal10} and \ref{thm:universal20}, can be achieved with a \emph{single} evaluation of the averaging operation: $L=1$.
\end{remark}

\begin{proof}{(Lemma \ref{lem:alpha-approx})}
Let $\alpha: L^1(\Omega;\R^k) \to \R$ be a continuous functional. Let $\mfK\subset L^1(\Omega;\R^k)$ be a compact set. Fix $\epsilon > 0$. Our aim is to show that there exists an averaging neural operator $\tilde{\alpha}: L^1(\Omega;\R^k) \to L^1(\Omega)$ with constant output, such that 
\[
\sup_{u\in \mfK} | \alpha(u) - \tilde{\alpha}(u) | \le \epsilon.
\]

To prove this, note that we can identify any $u\in L^1(\Omega;\R^k)$ with a function in $L^1(B;\R^k)$, via an extension of $u(x) := 0$ for $x\in B\setminus \Omega$. Using this identification, the compact subset $\mfK\subset L^1(\Omega;\R^k)$ can be identified with a compact subset of $L^1(\R^d;\R^k)$. Fix a smooth mollifier $\rho\in C^\infty$, and denote $\rho_\delta(x) := \delta^{-d} \rho(x/\delta)$ for $\delta > 0$. We denote by $u_\delta(x) = (u\ast \rho_\delta)(x)$ the mollification of $u$ (extended to all of $\R^d$ by $0$ outside of $\Omega$). Since $\mfK\subset L^1(\Omega;\R^k)$ is compact, it follows that 
\begin{align*}
\lim_{\delta \to 0} \sup_{u\in \mfK} \Vert u - u_\delta \Vert_{L^1(\Omega;\R^k)} = 0.
\end{align*}
Since $\alpha: L^1(\Omega;\R^k) \to \R$ is continuous, it follows that the mapping $\alpha_\delta: L^1(B,\R^k) \to \R$ defined by $\alpha_\delta(u) := \alpha(u_\delta)$, for $\delta>0$, converges uniformly over $\mfK$, as $\delta\to 0$:
\begin{align}
\label{eq:adelta}
\lim_{\delta \to 0} \sup_{u\in \mfK} |\alpha(u) - \alpha_\delta(u)|
\le
\lim_{\delta \to 0} \sup_{u\in \mfK} |\alpha(u) - \alpha(u_\delta)|
= 0.
\end{align}

The idea is now that, for any choice of orthonormal basis $\xi_1,\xi_2,\dots$ of $L^2(\Omega;\R^k)$, which we may additionally choose to be smooth, $\xi_j \in C^\infty(\Omega;\R^k)$, we have uniform convergence,
\begin{align}
\label{eq:phiconv}
\sup_{u\in \mfK} 
\left\Vert 
u_\delta - 
\textstyle\sum_{j=1}^J \langle u_\delta, \xi_j \rangle_{L^2} \xi_j
\right\Vert_{L^1(B;\R^k)} 
\to 0,
\end{align}
as $J\to \infty$. We note that the expression in \eqref{eq:phiconv} is well-defined, since $\mfK \subset L^1\cap L^\infty \subset L^2$, by assumption. Furthermore, $\mfK\subset L^2$ is compact in the $L^2$-norm; this entails the uniform convergence \eqref{eq:phiconv}. With the convention that $u$ and $\xi_j$ are expanded by $0$ outside of $\Omega$, we have
\begin{gather}
\label{eq:L2}
\begin{aligned}
\langle u_\delta, \xi_j \rangle_{L^2}
&=
\int_{\Omega} u_\delta(y) \cdot \xi_j(y) \, dy
= \int_{\R^d} (u \ast \rho_\delta)(y) \cdot \xi_j(y) \, dy
\\
&= \int_{\R^d} u(y) \cdot (\xi_j \ast \rho_\delta)(y) \, dy
= \fint_{\Omega} u(y) \cdot {\xi}_{j,\delta}(y) \, dy,
\end{aligned}
\end{gather}
where we have defined ${\xi}_{j,\delta}(y) := |\Omega| (\xi_j \ast \rho_\delta)(y)$.
Hence, if we define $\alpha_{\delta,J}: L^1(\Omega;\R^k) \to \R$ by 
\[
\alpha_{\delta,J}(u) :=
\alpha\left(
\sum_{j=1}^J \left(\fint_{\Omega} u(y) \cdot \xi_{j,\delta}(y) \, dy \right) \, \xi_j
\right),
\]
then it follows from \eqref{eq:adelta}, \eqref{eq:phiconv} and \eqref{eq:L2}, that for $\delta = \delta(\mfK)>0$ sufficiently small, and $J = J(\delta, \mfK)\in \N$ sufficiently large, we have $\sup_{u\in \mfK} |\alpha_\delta(u) - \alpha_{\delta,J}(u)| \le \epsilon/2$. To indicate the connection to averaging neural operators, we note that we can write $\alpha_{\delta, J}$ as the following composition:
\[
u 
\mapsto 
\left( \fint_{\Omega} u(y) \cdot \xi_{1,\delta}(y) \, dy, \dots, \fint_{\Omega} u(y) \cdot\xi_{J,\delta}(y) \, dy \right)
\mapsto 
\alpha\left(
\sum_{j=1}^J \left(\fint_{\Omega} u(y) \cdot\xi_{j,\delta}(y) \, dy \right) \, \xi_j
\right).
\]
The first of these mapping requires computation of certain averages (albeit against a function $\xi_{j,\delta}$). Approximation of this mapping clearly requires nonlocality. The second mapping defines a continuous function $\beta: \R^J \to \R$, $c = (c_1,\dots, c_J) \mapsto \alpha\left(\sum_{j=1}^J c_j \xi_j\right)$. Approximation of this mapping is possible by ordinary neural networks; this requires nonlinearity, but does not require nonlocality. 

Choose $M>0$, such that the image of the compact set $\mfK\subset L^1(\Omega;\R^k)$ under the mapping
\[
L^1(\Omega;\R^k) \to \R^J, 
\quad
u \mapsto \left( \fint_{\Omega} u(y) \cdot \xi_{1,\delta}(y) \, dy, \dots, \fint_{\Omega} u(y) \cdot\xi_{J,\delta}(y) \, dy \right),
\]
is contained in $[-M,M]^J$. Note that $c \mapsto \beta(c) = \alpha\left(\sum_{j=1}^J c_j \xi_j\right)$ is a continuous function, by the continuity of $\alpha$. By universality of conventional neural networks, there exists a neural network $\tilde{\beta}: \R^J \to \R$, such that 
\begin{align}
\label{eq:betaapprox}
\sup_{c\in [-M,M]^N} |\tilde{\beta}(c) - \beta(c)| \le \epsilon.
\end{align}
Write $\tilde{\beta}: \R^J \to \R$ as a composition of its hidden layers:
\begin{gather}
\label{eq:tildepsi}
\left\{
\begin{aligned}
v_0 &:= c, 
\\
v_{\ell} &:= \sigma\left(\tilde{A}_\ell v_{\ell-1} + \tilde{b}_\ell \right), \quad \ell =1,\dots, L, 
\\
\tilde{\beta}(c) &:= \tilde{A}_{L+1} v_L + \tilde{b}_{L+1},
\end{aligned}
\right.
\end{gather}
where $\tilde{A}_\ell$, $\tilde{b}_\ell$ are the weights and biases of the hidden layers. Parallelizing the neural networks constructed in Lemma \ref{lem:proj}, below, it follows that for any $\epsilon'>0$, there exists a neural network $\tilde{R}: \R^k \times \Omega \to \R^J$, $(v,x) \mapsto \tilde{R}(v,x) = (\tilde{R}_1(v,x),\dots, \tilde{R}_J(v,x))$, such that 
\[
\sup_{u\in \mfK}
\left|
\fint_{\Omega} \tilde{R}_j(u(y),y) \, dy
-
\fint_{\Omega} u(y)\cdot \xi_{j,\delta}(y) \, dy
\right| 
\le \epsilon',
\]
for $j=1,\dots, J$. Composing the output layer with an affine mapping, we can construct
another neural network ${R}$, such that
\[
R(u(x),x) = \tilde{A}_1 \tilde{R}(u(x),x) + \tilde{b}_1,
\]
where $\tilde{A}_1$, $\tilde{b}_1$ are the weights and biases of the input layer of $\tilde{\beta}$ defined by \eqref{eq:tildepsi}.
In particular, it follows that for any input $u\in \mfK$, and defining coefficients $c(u) := (c_1,\dots, c_J)$ by $c_j := \fint_{\Omega} u(y)\cdot \xi_{j,\delta}(y) \, dy$, we have
\begin{align}
\label{eq:Rc}
\sup_{u\in \mfK}
\left|
\sigma \left(
\fint_{\Omega} {R}(u(y),y) \, dy
\right)
-
\sigma\left(\tilde{A}_1 c(u) + \tilde{b}_1\right)
\right| 
\le \Vert \sigma \Vert_{\Lip} \Vert \tilde{A}_1 \Vert \epsilon',
\end{align}
where $\Vert \tilde{A}_1 \Vert$ denotes the operator norm of $\tilde{A}_1$. Note that the second term in \eqref{eq:Rc} is exactly the output of the first hidden layer of $\tilde{\beta}$, cp. \eqref{eq:tildepsi}. Let us decompose $\tilde{\beta}(c) = \tilde{\beta}_{1} \circ \sigma(\tilde{A}_1 c + \tilde{b}_1)$, where $\tilde{\beta}_1$ denotes the composition of the other hidden layers, $\ell=2,\dots, L$, and the output layer. Composing each of the terms appearing on the left-hand side of \eqref{eq:Rc} with $\tilde{\beta}_1$ and choosing $\epsilon'$ sufficiently small (depending on $\tilde{\beta}_1$, $\tilde{A}_1$), we can ensure that
\[
\sup_{u\in \mfK}
\left|
\tilde{\beta}_1 \left( \sigma\left( 
\fint_{\Omega} {R}(u(y),y) \, dy
\right) \right)
-
\tilde{\beta}_1\left(\sigma\left(\tilde{A}_1 c(u) + \tilde{b}_1\right)\right)
\right|
\le \epsilon,
\]
or equivalently,
\begin{align}
\label{eq:tpsiapprox}
\sup_{u\in \mfK}
\left|
\tilde{\beta}_1 \left( \sigma\left( 
\fint_{\Omega} {R}(u(y),y) \, dy
\right) \right)
-
\tilde{\beta}(c(u))
\right|
\le \epsilon.
\end{align}
From the above, \eqref{eq:betaapprox} an \eqref{eq:tpsiapprox}, it follows that the averaging neural operator $\tilde{\alpha}$, defined by the following composition,
\begin{align}
\label{eq:psi1}
\tilde{\alpha}: u 
\overset{(1)}{\mapsto} R(u(\slot),\slot) 
\overset{(2)}{\mapsto}  \sigma\left(\fint_{\Omega} R(u(y),y) \, dy \right)
\overset{(3)}{\mapsto}  \tilde{\beta}_1 \circ \sigma\left(\fint_{\Omega} R(u(y),y) \, dy \right),
\end{align}
satisfies
\begin{align*}
\sup_{u\in \mfK} |\alpha(u) - \tilde{\alpha}(u)|
&=
\sup_{u\in \mfK} |\beta(c(u)) - \tilde{\alpha}(u)|
\\
&\le
\sup_{u\in \mfK} |\beta(c(u)) - \tilde{\beta}(c(u))|
\\
&\qquad +
\sup_{u\in \mfK} \left|\tilde{\beta}(c(u)) - \tilde{\beta}_1\circ \sigma\left(\fint_{\Omega} R(u(y),y) \, dy \right)\right|
\\
&\le
\sup_{c\in [-M,M]^N} |\beta(c) - \tilde{\beta}(c)|
\\
&\qquad +
\sup_{u\in \mfK} \left|\tilde{\beta}(c(u)) - \tilde{\beta}_1\circ \sigma\left(\fint_{\Omega} R(u(y),y) \, dy \right)\right|
\\
&\le \epsilon.
\end{align*}
\end{proof}

We finally state the following lemma, which was used in the preceding
proof of Lemma \ref{lem:alpha-approx}:

\begin{lemma}
\label{lem:proj}
Let $\mfK\subset L^1(\Omega;\R^k)$ be compact, consisting of uniformly bounded functions $\sup_{u\in \mfK} \Vert u \Vert_{L^\infty} < \infty$. Let ${\xi} \in C^\infty(\bar{\Omega};\R^k)$ be given and fixed. Then for any $\epsilon > 0$, there exists a neural network $\tilde R: \R^k \times \Omega \to \R^k$, such that
\[
\sup_{u\in \mfK} \left| \fint_{\Omega} \tilde R(u(y),y) \, dy - \fint_{\Omega} u(y)\cdot {\xi}(y) \, dy \right|
\le
\epsilon.
\]
\end{lemma}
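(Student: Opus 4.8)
The plan is to exploit the fact that the target functional is reproduced \emph{exactly} by averaging one explicit, elementary integrand, and then to approximate that integrand by a neural network. Since the quantity to be approximated is a real number, the natural integrand is the (scalar-valued) bilinear map $R_\star(v,x) := v\cdot\xi(x)$ on $\R^k\times\bar\Omega$, for which $\fint_\Omega R_\star(u(y),y)\,dy = \fint_\Omega u(y)\cdot\xi(y)\,dy$ holds identically in $u$; as $\xi\in C^\infty(\bar\Omega;\R^k)$, the map $R_\star$ is continuous (indeed smooth). Thus it suffices to find a neural network $\tilde R$ approximating $R_\star$ uniformly over the set of arguments $(u(y),y)$ that can occur, and to check that this uniform error survives averaging over $\Omega$.

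First I would fix the relevant compact set. Let $L := \sup_{u\in\mfK}\Vert u\Vert_{L^\infty}$, finite by hypothesis; then for each $u\in\mfK$ one has $|u(y)|\le L$ for a.e.\ $y\in\Omega$, so every pair $(u(y),y)$ with $u\in\mfK$ lies in the compact set $\cC := \{(v,x)\in\R^k\times\R^d : |v|\le L,\ x\in\bar\Omega\}$ (here $\bar\Omega$ is compact since $\Omega$ is a bounded Lipschitz domain). Applying the universal approximation property of neural networks for continuous functions on compact sets \cite{pinkus1999approximation} to $R_\star|_{\cC}$, I obtain a neural network $\tilde R$ with activation $\sigma$ such that $\sup_{(v,x)\in\cC}|\tilde R(v,x)-v\cdot\xi(x)|\le\epsilon$. (To instead quote Lemma \ref{lem:nn-univ}, which is stated for Lipschitz domains, one first extends $\xi$ to a $C^\infty$ function on a box $B\supset\bar\Omega$ using Lemma \ref{lem:periodic_extension}, so that $R_\star$ extends continuously to a box in $\R^{k+d}$, and applies Lemma \ref{lem:nn-univ} there.)

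It then remains to transfer the estimate. For every $u\in\mfK$, using $(u(y),y)\in\cC$ for a.e.\ $y$,
\[
\left|\fint_\Omega\tilde R(u(y),y)\,dy - \fint_\Omega u(y)\cdot\xi(y)\,dy\right|
\le\fint_\Omega\bigl|\tilde R(u(y),y) - u(y)\cdot\xi(y)\bigr|\,dy\le\epsilon,
\]
and since the right-hand side is independent of $u$, taking the supremum over $\mfK$ finishes the proof. One should also note that all the averages are well-defined: $u\cdot\xi\in L^1(\Omega)$ because $u\in L^1$ and $\xi$ is bounded on $\bar\Omega$, and $y\mapsto\tilde R(u(y),y)$ is measurable and essentially bounded because $\tilde R$ is continuous and $(u(y),y)$ ranges in the compact set $\cC$ for a.e.\ $y$.

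I do not expect a genuine obstacle: the content of the lemma is precisely that averaging a neural-network integrand can reproduce a prescribed smooth linear functional, and the sole essential use of the hypotheses is the uniform $L^\infty$ bound on $\mfK$ — that is exactly what renders the argument set $\cC$ compact, so that the Pinkus-type approximation theorem applies with an error uniform over $\mfK$. The only slightly delicate bookkeeping is matching ``continuous functions on a compact set'' with the domain-based phrasing of Lemma \ref{lem:nn-univ}, handled by the extension step indicated above.
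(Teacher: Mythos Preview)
Your argument is correct. The key observation that $R_\star(v,x)=v\cdot\xi(x)$ is continuous on the compact set $\cC=\{|v|\le L\}\times\bar\Omega$, and that a single application of universal approximation to $R_\star$ suffices, is sound; the transfer to the average via the triangle inequality is immediate, and your remarks on measurability and well-definedness close the remaining gaps.

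The paper takes a slightly different, more compositional route: rather than approximating $(v,x)\mapsto v\cdot\xi(x)$ in one shot, it first approximates $\xi$ by a neural network $\tilde\xi:\Omega\to\R^k$, then separately approximates the bilinear map $(v,w)\mapsto v\cdot w$ on $[-M_\mfK,M_\mfK]^k\times[-M_\xi,M_\xi]^k$ by a network $\tilde\times$, and finally sets $\tilde R(v,x):=\tilde\times(v,\tilde\xi(x))$. The two errors are balanced and combined by the triangle inequality to yield the pointwise bound $|\tilde R(u(x),x)-u(x)\cdot\xi(x)|\le\epsilon$, after which the averaging step is identical to yours. Your approach is more economical, invoking universal approximation once rather than twice; the paper's decomposition makes the compositional structure of $\tilde R$ explicit (a network in $x$ feeding into a network approximating the product), but this extra structure is not exploited downstream, so nothing is lost by your shortcut.
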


\begin{proof}
Fix $\epsilon > 0$. In the following, we denote by 
\[
M_\mfK := \sup_{u\in \mfK} \Vert u \Vert_{L^\infty},
\]
the upper $L^\infty$-bound on elements $u\in \mfK$. By assumption, $M_\mfK$ is finite. As $\xi \in C^\infty(\bar{\Omega};\R^k)$ is fixed, and $\Omega\subset \R^d$ is bounded, there exists a neural network $\tilde{\xi}: \Omega \to \R^k$, 
such that 
\[
\sup_{x\in \Omega} \vert \xi(x) - \tilde{\xi}(x) \vert_{\ell^\infty} \le \epsilon/(2kM_\mfK),
\]
where $k$ is the number of components of $\xi$. Furthermore define
\[
M_\xi := 
\max\left\{
\Vert \xi \Vert_{L^\infty}, \Vert \tilde\xi \Vert_{L^\infty}
\right\} < \infty.
\]
By the universality of ordinary neural networks, there exists a neural network $\tilde{\times}: [-M_\mfK,M_\mfK]^k \times [-M_\xi,M_\xi]^k \to \R$, such that 
\[
\sup_{\vert v \vert_{\ell^\infty} \le M_\mfK, \vert w\vert_{\ell^\infty} \le M_\xi}
| v \cdot w  - \tilde{\times}(v,w) | \le \epsilon/2.
\]
Defining a new neural network as the composition $\tilde R(v,x) := \tilde{\times}(v,\tilde{\xi}(x))$, it now follows that for any $u\in \mfK$:
\begin{align*}
\sup_{x\in \Omega} |\tilde R(u(x),x) - u(x)\cdot \xi(x)|
&=
\sup_{x\in \Omega} |\tilde{\times}(u(x),\tilde\xi(x)) - u(x)\cdot \xi(x)|
\\
&\le 
\sup_{x\in \Omega} |\tilde{\times}(u(x),\tilde\xi(x)) - u(x)\cdot \tilde{\xi}(x)|
\\
&\qquad + \sup_{x\in \Omega} | u(x)\cdot \tilde{\xi}(x) - u(x) \cdot \xi(x)|
\\
&\le 
\sup_{\vert v \vert_{\ell^\infty} \le M_\mfK, \vert w\vert_{\ell^\infty} \le M_\xi} |\tilde{\times}(v,w) - v\cdot w|
\\
&\qquad + kM \sup_{x\in \Omega} \vert \tilde{\xi}(x) - \xi(x) \vert_{\ell^\infty}
\\
&\le \epsilon.
\end{align*}
In particular, this upper bound implies that 
\[
\sup_{u\in \mfK}
\left| 
\fint_{\Omega} \tilde R(u(y),y) \, dy - \fint_{\Omega} u(y) \cdot \xi(y) \, dy
\right|
\le \epsilon.
\]
\end{proof}

\subsection{Proof Of Universal Approximation $C^s \to C^{s'}$, 
Theorem \ref{thm:universal10}}
\label{app:universal1}

Given the results of Sections \ref{app:dense} and \ref{app:alpha-approx}, we can now provide a detailed proof of the universal approximation Theorem \ref{thm:universal10}. The main idea behind the proof is that using Proposition \ref{prop:simpler} in Section \ref{app:dense}, it suffices to approximate operators of the form 
\[
\tilde{\Psi}^\dagger(u) = \sum_{j=1}^J \alpha_j(u) \eta_j,
\]
where $\alpha_j: L^1(\Omega;\R^k) \to \R$ are functionals, and $\eta_j\in C^{s'}(\bar{\Omega};\R^{k'})$ are fixed functions. Given averaging neural operators $\Psi_1,\dots, \Psi_J$, we can easily combine them to obtain a new averaging neural operator $\Psi$, such that $\Psi(u) = \sum_{j=1}^J \Psi_j(u)$ for all input functions $u$. Thus, it suffices to consider only one term in the above sum, i.e. it will suffice to prove that any mapping of the form 
\[
\tilde{\Psi}^\dagger_j: 
L^1(\Omega;\R^k) \to C^{s'}(\bar{\Omega};\R^{k'}), \quad u \mapsto \alpha_j(u) \eta_j,
\]
can be approximated by an averaging neural operator. The main difficulty in proving this is to show that any functional $\alpha_j: L^1(\Omega;\R^k) \to \R$ can be approximated by averaging neural operators. This is the content of Lemma \ref{lem:alpha-approx} in the previous Section \ref{app:alpha-approx}. Given this core ingredient, we can now provide a complete the proof of the universal approximation Theorem \ref{thm:universal10}, below:

\begin{proof}{(Theorem \ref{thm:universal10})}
\label{pf:universal1}

Let $\Psi^\dagger: C^s(\bar{\Omega};\R^k) \to C^{s'}(\bar{\Omega};\R^{k'})$ be a continuous operator. Let $\mfK\subset C^s(\bar{\Omega};\R^k)$ be compact. We aim to show that for any $\epsilon > 0$, there exists an averaging neural operator $\Psi$ of the form $\Psi = \cQ \circ \cL_L \circ \dots \circ \cL_1 \circ \cR$, such that 
\[
\sup_{u\in \mfK} \Vert \Psi^\dagger(u) - \Psi(u) \Vert_{C^{s'}} \le \epsilon.
\]
Fix $\epsilon > 0$. By Proposition \ref{prop:simpler}, there exist functions $\eta_1,\dots, \eta_J \in C^{s'}(\bar{\Omega}; \R^{k'})$, and continuous functionals $\alpha_1,\dots, \alpha_J: L^1(\Omega;\R^k) \to \R$, such that 
\[
\sup_{u\in \mfK} 
\left\Vert 
\Psi^\dagger(u) - \textstyle\sum_{j=1}^J \alpha_j(u) \eta_j
\right\Vert_{C^{s'}} \le \epsilon/2.
\]
We now make the following claim:
\begin{claim}
\label{claim:cite}
Let $\mfK\subset L^1(\Omega;\R^k)$ be a compact set, consisting of bounded functions $\sup_{u\in \mfK} \Vert u \Vert_{L^\infty} < \infty$.
Let $\eta_1,\dots, \eta_J\in C^{s'}(\bar{\Omega};\R^k)$ be functions and let $\alpha_1,\dots,\alpha_J: L^1(\Omega;\R^k) \to \R$ be continuous nonlinear functionals. Then for any $j=1,\dots, J$, there exists an averaging neural operator $\Psi_j: L^1(\Omega;\R^k) \to C^{s'}(\bar{\Omega};\R^k)$, such that 
\[
\sup_{u\in \mfK} \Vert \alpha_j(u) \eta_j - \Psi_j(u) \Vert_{C^{s'}} \le \epsilon/2J.
\]
\end{claim}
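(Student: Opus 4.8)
The plan is to fix $j\in\{1,\dots,J\}$ and build $\Psi_j$ by bolting together three ingredients that are already available: an ANO-encoder approximating the scalar functional $\alpha_j$, a neural-network approximation of the fixed profile $\eta_j$, and a neural network implementing scalar multiplication, the last two forming the decoder. First I would invoke Lemma \ref{lem:alpha-approx} with the continuous functional $\alpha_j:L^1(\Omega;\R^k)\to\R$ and the compact set $\mfK$ to obtain an averaging neural operator $\tilde\alpha_j$ with constant output such that $\sup_{u\in\mfK}|\alpha_j(u)-\tilde\alpha_j(u)|\le\delta_1$, where $\delta_1\le 1$ will be pinned down at the end; by Remark \ref{rem:alpha-approx} this $\tilde\alpha_j$ has the concrete form $u\mapsto R(u(\cdot),\cdot)\mapsto v:=\sigma\bigl(\fint_\Omega R(u(y),y)\,dy\bigr)\mapsto q_1(v)$ for neural networks $R:\R^k\times\Omega\to\R^{\dc}$ and $q_1:\R^{\dc}\to\R$. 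Since $\mfK$ is compact and $\alpha_j$ continuous, $\alpha_j(\mfK)$ is bounded, hence there is $M>0$ with $\tilde\alpha_j(\mfK)\subset[-M,M]$.

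Next I would approximate the decoding. By Lemma \ref{lem:nn-univ} — more precisely its proof, which, via the extension of Lemma \ref{lem:periodic_extension} combined with the smooth version of the universal approximation theorem \cite{pinkus1999approximation}, yields approximation in the $C^{s'}$-norm when the target lies in $C^{s'}$ — there is a neural network $\tilde\eta_j:\Omega\to\R^{k'}$ with $\|\eta_j-\tilde\eta_j\|_{C^{s'}}\le\delta_2$. Since $(t,x)\mapsto t\,\tilde\eta_j(x)$ is a smooth map on the compact set $[-M,M]\times\bar\Omega$, I would then use universality of neural networks (directly in $C^{s'}$ on the product domain, or by composing a network approximating scalar multiplication with $(t,\tilde\eta_j(x))$ as in Lemma \ref{lem:proj} and controlling $x$-derivatives by the chain rule together with the $C^{s'}$-regularity of $\tilde\eta_j$) to obtain a neural network $\hat Q:\R\times\Omega\to\R^{k'}$ with $\sup_{t\in[-M,M]}\|\hat Q(t,\cdot)-t\,\tilde\eta_j(\cdot)\|_{C^{s'}(\bar\Omega)}\le\delta_3$. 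I then set $Q(v,x):=\hat Q(q_1(v),x)$ and define $\Psi_j=\cQ\circ\cL\circ\cR$ with $\cR:u\mapsto R(u(\cdot),\cdot)$, $\cL$ the averaging layer \eqref{eq:hidden} with $W=0$ and $b=0$, and $\cQ:v\mapsto Q(v,\cdot)$; this is an averaging neural operator, and because the encoded vector $v$ is constant in $x$, its output is exactly $\Psi_j(u)=\hat Q\bigl(\tilde\alpha_j(u),\cdot\bigr)$, as in \eqref{eq:avg-no}.

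It then remains to close the estimate by the triangle inequality, splitting, for $u\in\mfK$,
\[
\|\alpha_j(u)\eta_j-\Psi_j(u)\|_{C^{s'}}
\le
|\alpha_j(u)-\tilde\alpha_j(u)|\,\|\eta_j\|_{C^{s'}}
+|\tilde\alpha_j(u)|\,\|\eta_j-\tilde\eta_j\|_{C^{s'}}
+\|\tilde\alpha_j(u)\tilde\eta_j-\hat Q(\tilde\alpha_j(u),\cdot)\|_{C^{s'}},
\]
which is bounded by $\delta_1\|\eta_j\|_{C^{s'}}+M\delta_2+\delta_3$ since $\tilde\alpha_j(u)\in[-M,M]$. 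Choosing $\delta_1,\delta_2,\delta_3>0$ small enough, depending only on $\epsilon$, $J$, $M$ and $\|\eta_j\|_{C^{s'}}$, yields $\sup_{u\in\mfK}\|\alpha_j(u)\eta_j-\Psi_j(u)\|_{C^{s'}}\le\epsilon/2J$, which is the claim.

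I expect the only genuinely delicate point to be the derivative bookkeeping in the construction of the decoder $\hat Q$: one must upgrade the plain uniform ($C^0$) neural-network approximations used elsewhere to $C^{s'}$-approximations in the output variable $x$, which requires the smooth ($C^{s'}$) version of the universal approximation theorem and a chain-rule argument ensuring that $\partial_x^{\beta}\hat Q(q_1(v),x)$ stays uniformly close to $\partial_x^{\beta}\bigl(q_1(v)\,\tilde\eta_j(x)\bigr)$ for all $v$ with $q_1(v)\in[-M,M]$ and all multi-indices $|\beta|\le s'$. Everything else is routine triangle-inequality bookkeeping built on the already-established Lemmas \ref{lem:alpha-approx} and \ref{lem:nn-univ}; the Sobolev version needed for Theorem \ref{thm:universal20} is obtained by the same argument with $C^{s'}$ replaced by $W^{s',p'}$ throughout, using Proposition \ref{prop:simpler2} in place of Proposition \ref{prop:simpler}.
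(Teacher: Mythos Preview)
Your proposal is correct and follows essentially the same approach as the paper: invoke Lemma~\ref{lem:alpha-approx} for the scalar encoder $\tilde\alpha_j$, approximate $\eta_j$ by a neural network $\tilde\eta_j$ in $C^{s'}$, approximate the scalar-multiplication map by a neural network, and close with a three-term triangle inequality. The only cosmetic difference is that the paper first approximates the abstract multiplication map $\times:[-M_\alpha,M_\alpha]\times[-\tilde M_\eta,\tilde M_\eta]^{k'}\to\R^{k'}$ in $C^{s'}$ and then composes with $\tilde\eta_j$ via an explicit chain-rule constant $C_0$, whereas you approximate $(t,x)\mapsto t\,\tilde\eta_j(x)$ directly on $[-M,M]\times\bar\Omega$; both routes handle exactly the ``delicate point'' you flagged.
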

Relying on the above claim, it is easy to see that there exists an averaging neural operator, such that $\Psi(u) = \sum_{j=1}^J \Psi_j(u)$ for all input functions $u$. This operator satisfies, for any $u\in \mfK$:
\begin{align*}
\Vert \Psi^\dagger(u) - \Psi(u) \Vert_{C^{s'}}
&\le 
\left\Vert \Psi^\dagger(u) - \textstyle\sum_{j=1}^J \alpha_j(u) \eta_j \right\Vert_{C^{s'}} + 
\left\Vert \textstyle\sum_{j=1}^J \alpha_j(u) \eta_j - \Psi(u) \right\Vert_{C^{s'}}
\\
&\le
\left\Vert \Psi^\dagger(u) - \textstyle\sum_{j=1}^J \alpha_j(u) \eta_j \right\Vert_{C^{s'}} + 
 \sum_{j=1}^J \left\Vert\left[\alpha_j(u) \eta_j - \Psi_j(u)\right] \right\Vert_{C^{s'}}
\\
&\le \epsilon/2 + J \epsilon / 2J = \epsilon,
\end{align*}
where the last estimate holds for any $u\in \mfK$. Thus, taking the supremum over $\mfK$, the above claim implies that
\[
\sup_{u\in \mfK} \Vert \Psi^\dagger(u) - \Psi(u) \Vert_{C^{s'}}
\le \epsilon,
\]
which gives the universal approximation property of Theorem \ref{thm:universal10}.
To finish our argument, it thus remains to prove the above Claim \ref{claim:cite}.

To prove the claim, fix $j\in \{1,\dots, J\}$ for all the following. We first define
\begin{align}
\label{eq:Mphi}
M_\eta := \Vert \eta_j \Vert_{C^{s'}(\Omega;\R^{k'})}.
\end{align}
Next, we observe that by Lemma \ref{lem:alpha-approx}, there exists an averaging neural operator $\tilde{\alpha}_j: L^1(\Omega;\R^k) \to L^1(\Omega)$, with constant output functions, such that
\begin{align}
\label{eq:aest}
\sup_{u\in \mfK} |\alpha_j(u) - \tilde{\alpha}_j(u)| \le \epsilon / (6J M_\eta).
\end{align}
Choose $M_\alpha > 0$, such that 
\begin{align}
\label{eq:Malpha}
\sup_{u\in \mfK}|\alpha_j(u)|, \; \sup_{u\in \mfK} |\tilde{\alpha}_j(u)| \le M_\alpha.
\end{align}
Since $\eta_j \in C^{s'}(\bar{\Omega};\R^{k'})$, there exists an ordinary neural network 
$\tilde{\eta}_j: \Omega \to \R^{k'}$, such that (cp. e.g. the universal approximation result of Lemma \ref{lem:nn-univ}):
\begin{align}
\label{eq:Phiest}
\Vert \eta_j - \tilde{\eta}_j \Vert_{C^{s'}} \le \epsilon /(6JM_\alpha).
\end{align}
Let us also define
\begin{align}
\label{eq:MPhi}
\tilde{M}_\eta := \max\left\{ M_\eta, \Vert \tilde{\eta}_j \Vert_{C^{s'}}\right\}.
\end{align}
Fix a small parameter $\delta > 0$, to be determined below. Since scalar multiplication $\times: [-M_\alpha,M_\alpha]\times [-\tilde{M}_\eta,\tilde{M}_\eta]^k \to \R^k$, $(a, v) \mapsto \times(a,v) := a v$ defines a smooth mapping, there similarly exists a neural network $\tilde{\times}: [-M_\alpha,M_\alpha]\times [-\tilde{M}_\eta,\tilde{M}_\eta]^k \to \R^k$, $(a, v) \mapsto \times(a,v)$, such that
\begin{align}
\label{eq:timesest}
\Vert \times(\slot, \slot) - \tilde{\times}(\slot,\slot) \Vert_{C^{s'}([-M_\alpha,M_\alpha]\times [-\tilde{M}_\eta,\tilde{M}_\eta]^{k'};\R^{k'})} \le \delta.
\end{align}
We also recall that a composition of $C^{s'}$-functions is itself $C^{s'}$, and that there exists a constant $C_0 = C_0(s',k')>0$, depending only on $s'$ and $k'$, such that
\begin{align*}
&\sup_{|a|\le M_\alpha} \Vert \times(a,\tilde{\eta}_j(\slot)) - \tilde{\times}(a, \tilde{\eta}_j(\slot)) \Vert_{C^{s'}(\Omega;\R^{k'})}
\\
&\qquad \le
\Vert \times(\slot,\tilde{\eta}_j(\slot)) - \tilde{\times}(\slot, \tilde{\eta}_j(\slot)) \Vert_{C^{s'}([-M_\alpha,M_\alpha]\times\Omega;\R^{k'})}
\\
&\qquad \le
C_0\Vert \times(\slot,\tilde{\eta}_j(\slot)) - \tilde{\times}(\slot,\slot) \Vert_{C^{s'}([-M_\alpha,M_\alpha]\times [-\tilde{M}_\eta,\tilde{M}_\eta]^{k'};\R^{k'})}
\; \Vert \tilde{\eta}_j \Vert_{C^{s'}(\Omega;\R^{k'})}
\\
&\qquad \le
C_0\delta \tilde{M}_\eta.
\end{align*}
Thus, for any $|a|, |\tilde{a}| \le M_\alpha$, we obtain
\begin{align*}
&\Vert \times(a,\eta_j(\slot)) - \tilde{\times}(\tilde{a}, \tilde{\eta}_j(\slot)) \Vert_{C^{s'}(\Omega;\R^{k'})}
\\
&\hspace{2cm} \le
\Vert \times(a,\eta_j(\slot)) -  \times(\tilde{a},\eta_j(\slot)) \Vert_{C^{s'}(\Omega;\R^{k'})} 
\\
&\hspace{2cm}\quad  
+ \Vert \times(\tilde{a},\eta_j(\slot)) -  \times(\tilde{a},\tilde{\eta}_j(\slot)) \Vert_{C^{s'}(\Omega;\R^{k'})} 
\\
&\hspace{2cm}\quad  
+\Vert \times(\tilde{a},\tilde{\eta}_j(\slot)) - \tilde{\times}(\tilde{a}, \tilde{\eta}_j(\slot)) \Vert_{C^{s'}(\Omega;\R^{k'})}
\\
&\hspace{2cm} \le
|a-\tilde{a}| M_\eta 
+ M_\alpha \Vert \eta_j - \tilde{\eta}_j \Vert_{C^{s'}(\Omega;\R^{k'})} 
+ C_0 \delta \tilde{M}_\eta.
\end{align*}
Recalling \eqref{eq:aest} and \eqref{eq:Phiest} to bound the first two terms, and choosing $\delta := \epsilon / (6N C_0 \tilde{M}_\eta)$ in \eqref{eq:timesest}, it follows that
\[
\Vert \times(a,\eta_j(\slot)) - \tilde{\times}(\tilde{a}, \tilde{\eta}_j(\slot)) \Vert_{C^{s'}(\Omega;\R^{k'})}
\le 
\epsilon/6J
+ \epsilon/ 6J
+ \epsilon / 6J,
\]
for any $|a|$, $|\tilde{a}|\le M_\alpha$. By definition of $M_\alpha$, \eqref{eq:Malpha}, given arbitrary $u\in \mfK$, we have $|\alpha_j(u)| \le M_\alpha$ and $|\tilde{\alpha}_j(u)|\le M_\alpha$. Hence, the above estimate finally implies that 
\begin{align}
\label{eq:almost}
\sup_{u\in \mfK}
\Vert \alpha_j(u) \eta_j - \tilde{\times}(\tilde{\alpha}_j(u), \tilde{\eta}_j(\slot)) \Vert_{C^{s'}(\Omega;\R^{k'})} \le \epsilon / 2J.
\end{align}
Note that the mapping $\R \times \Omega \to \R^{k'}$, $(a,x) \mapsto \tilde{\times}(a,\tilde{\eta}_j(x))$ is an ordinary neural network in $(a,x)$. Since $\tilde{\alpha}_j$ is an averaging neural operator by construction, we can write it in the form $\tQ \circ \cL_L\circ \dots \circ \cL_1 \circ \cR$, in terms of a raising operator $\cR$, hidden layers $\cL_\ell$, and a projection layer $\tQ$, where the values $\tQ(v)(x) := \tilde{Q}(v(x),x) \in \R$ are given in terms of an ordinary neural network $\tilde{Q}$. Let $\cQ$ denote the composition $\cQ(v)(x) := \tilde{\times}(\tilde{Q}(v(x),x),\tilde{\eta}_j(x))$. Then 
\[
\Psi(u) := \cQ \circ \cL_L \circ \dots \circ \cL_1 \circ \cR(u),
\]
defines an averaging neural operator, for which 
\[
\Psi(u)(x) = \tilde{\times}(\tilde{\alpha}_j(u),\tilde{\eta}_j(x)).
\]
By \eqref{eq:almost}, it follows that
\[
\Vert \alpha_j(u) \eta_j - \Psi(u) \Vert_{C^{s'}}
\le 
\epsilon / 2J.
\]
This concludes our proof of the claim.
\end{proof}

\subsection{Proof Of Universal Approximation $W^{s,p} \to W^{s',p'}$, Theorem \ref{thm:universal20}}
\label{app:universal2}

The previous section provides the detailed proof of the universal approximation theorem \ref{thm:universal10}, in the setting where the underlying function spaces $\cX$ and $\cY$ consist of continuously differentiable functions. Theorem \ref{thm:universal20} states a corresponding universality result in the scale of Sobolev spaces. The proof of Theorem \ref{thm:universal20} is almost identical to the proof of Theorem \ref{thm:universal10}. In the present section, we provide the necessary alterations to the proof.

\begin{proof}{(Theorem \ref{thm:universal20})}
Let $\Psi^\dagger: W^{s,p}(\Omega;\R^k) \to W^{s',p'}(\Omega;\R^{k'})$ be a continuous operator with integer $s,s'\ge 0$ and $p,p'\in [1,\infty)$. Let $\mfK\subset W^{s,p}(\Omega;\R^k)$ be compact, consisting of bounded functions, $\sup_{u\in \mfK} \Vert u \Vert_{L^\infty} < \infty$. We aim to show that for any $\epsilon > 0$, there exists an averaging neural operator $\Psi$ of the form $\Psi = \cQ \circ \cL_L \circ \dots \circ \cL_1 \circ \cR$, such that 
\[
\sup_{u\in \mfK} \Vert \Psi^\dagger(u) - \Psi(u) \Vert_{W^{s',p'}} \le \epsilon.
\]
Fix $\epsilon > 0$. By Proposition \ref{prop:simpler2}, there exist functions $\eta_1,\dots, \eta_J \in W^{s',p'}(\Omega; \R^{k'})$, and continuous functionals $\alpha_1,\dots, \alpha_J: L^1(\Omega;\R^k) \to \R$, such that 
\[
\sup_{u\in \mfK} 
\left\Vert 
\Psi^\dagger(u) - \textstyle\sum_{j=1}^J \alpha_j(u) \eta_j
\right\Vert_{W^{s',p'}} \le \epsilon/3.
\]
Approximating each $\eta_j$ by its boundary-adapted mollification, $\eta_{j,\delta} := \cM_\delta \eta_j \in C^\infty(\bar{\Omega};\R^{k'})$ with $\delta > 0$ chosen sufficiently small (cp. Lemma \ref{lem:moll}), we can ensure that 
\[
\sup_{u\in \mfK} 
\left\Vert 
\Psi^\dagger(u) - \textstyle\sum_{j=1}^J \alpha_j(u) \eta_{j,\delta}
\right\Vert_{W^{s',p'}} \le 2\epsilon/3.
\]
We next note that $C^{s'+1}(\bar{\Omega};\R^k) \embeds W^{s',p'}(\Omega;\R^{k'})$ has a continuous embedding, hence there exists a constant $C_0>0$, such that 
\begin{align}
\label{eq:embedC}
\Vert v \Vert_{W^{s',p'}} \le C_0 \Vert v \Vert_{C^{s'+1}}, \quad \forall\, v\in C^{s'+1}(\bar{\Omega};\R^{k'}).
\end{align}
We note that $u \mapsto \textstyle\sum_{j=1}^J \alpha_j(u) \eta_{j,\delta}$ defines a continuous operator $L^1(\Omega;\R^k) \to C^{s'+1}(\bar{\Omega};\R^{k'})$ and recall that, by assumption, $\mfK\subset  W^{s,p}(\Omega;\R^k) \subset L^1(\Omega;\R^k)$ is a compact set consisting of bounded functions, $\sup_{u\in \mfK} \Vert u \Vert_{L^\infty}< \infty$. It thus follows from Claim \ref{claim:cite} 
that there exists an averaging neural operator $\Psi: L^1(\Omega;\R^k) \to C^{s'+1}(\bar{\Omega};\R^{k'})$, such that
\[
\sup_{u\in \mfK} 
\left\Vert 
\textstyle\sum_{j=1}^J \alpha_j(u) \eta_{j,\delta}
- \Psi(u)
\right\Vert_{C^{s'+1}}
\le \epsilon/3C_0,
\]
where $C_0>0$ denotes the embedding constant of \eqref{eq:embedC}. For this averaging neural operator $\Psi$, it follows that
\begin{align*}
\sup_{u\in \mfK} 
\left\Vert 
\Psi^\dagger(u) - \Psi(u)
\right\Vert_{W^{s',p'}}
&\le
\sup_{u\in \mfK} 
\left\Vert 
\Psi^\dagger(u) - \textstyle\sum_{j=1}^J \alpha_j(u) \eta_{j,\delta}
\right\Vert_{W^{s',p'}}
\\
&\qquad 
+ \sup_{u\in \mfK} 
\left\Vert 
\textstyle\sum_{j=1}^J \alpha_j(u) \eta_{j,\delta} -\Psi(u)
\right\Vert_{W^{s',p'}}
\\
&\le
\sup_{u\in \mfK} 
\left\Vert 
\Psi^\dagger(u) - \textstyle\sum_{j=1}^J \alpha_j(u) \eta_{j,\delta}
\right\Vert_{W^{s',p'}}
\\
&\qquad 
+ C_0\sup_{u\in \mfK} 
\left\Vert 
\textstyle\sum_{j=1}^J \alpha_j(u) \eta_{j,\delta} -\Psi(u)
\right\Vert_{C^{s'+1}}
\\
&\le \epsilon.
\end{align*}
This concludes our proof.
\end{proof}


\end{document}